\documentclass[10pt,aps,prb, preprint,reqno]{amsart}
\usepackage[margin=.8in]{geometry}
\usepackage{amsmath,amssymb,amsthm,graphicx,amsxtra, setspace}
\usepackage{mathabx}
\usepackage[colorlinks,backref]{hyperref}
\usepackage{esint}
\usepackage{hyperref}
\usepackage[dvipsnames]{xcolor}
\usepackage{dsfont}
\usepackage[utf8]{inputenc}
\usepackage{mathrsfs}
\usepackage{upgreek}
\usepackage{mathtools}
\usepackage[makeroom]{cancel}
\allowdisplaybreaks

\usepackage[cyr]{aeguill}

\colorlet{darkblue}{blue!50!black}

\hypersetup{
	colorlinks,%
	citecolor=blue,%
	filecolor=red,%
	linkcolor=darkblue,%
	urlcolor=blue,%
	pdfnewwindow=true,%
	pdfstartview={FitH}
}

\colorlet{darkblue}{red!100!black}

% THEOREM Environments ---------------------------------------------------
%\setlength{\textheight}{19.5 cm} \setlength{\textwidth}{12.5 cm}
\newtheorem{theorem}{Theorem}[section]

\newtheorem{lemma}[theorem]{Lemma}
\newtheorem{proposition}[theorem]{Proposition}

\newtheorem{definition}[theorem]{Definition}

\newtheorem{remark}[theorem]{Remark}

\let\originalleft\left
\let\originalright\right
\renewcommand{\left}{\mathopen{}\mathclose\bgroup\originalleft}
\renewcommand{\right}{\aftergroup\egroup\originalright}

% MATH -----------------------------------------------------------
\newcommand{\Tr}{\mathop{\mathrm{Tr}}}
\renewcommand{\d}{\/\mathrm{d}\/}

\def\e{\boldsymbol{e}}

\def\EN{|\!|\!|}

\def\T{T\wedge\tau_N}

\def\T3{\mathbb{T}^3}

\def\diver{\mathrm{div}}
\def\supp{\mathrm{supp}}
\def\curl{\mathrm{curl}}
\def\L{\mathrm{L}}
\def\A{\mathrm{A}}

\def\Id{\mathrm{Id}}

\def\f{\boldsymbol{f}}

\def\X{\mathbb{X}}
\def\bm{\boldsymbol{m}}

\def\g{\mathbf{g}}

\def\z{\boldsymbol{z} }
\def\v{\boldsymbol{v}}

\def\W{\mathrm{W}}
\def\Wb{\mathbb{W}}

\def\N{\mathbb{N}}

\def\wi{\widetilde}

\def\u{\mathrm{U}}

\def\u{\boldsymbol{u}}
\def\H{\mathbb{H}}

\newcommand{\R}{\mathbb{R}}

\renewcommand{\d}{\/\mathrm{d}\/}

% ----------------------------------------------------------------

\setcounter{tocdepth}{1} 

%\newcommand{\Addresses}{{% additional braces for segregating \footnotesize
%		\footnote{
%			%	\footnotesize
%			\noindent \textsuperscript{1}Centre For Applicable Mathematics (CAM), Tata Institute of Fundamental Research (TIFR), PO Box 6503, GKVK Post Office, Bangalore 560065, India.
%			\par\nopagebreak
%			\noindent  \textit{e-mail:} \texttt{Kush Kinra: kush23@tifrbng.res.in; kushkinra@gmail.com.}
%			
%			\textit{e-mail:} \texttt{Ujjwal Koley: ujjwal@math.tifrbng.res.in}
%			
%			\noindent \textsuperscript{*}Corresponding author.
%			
%			\textit{Key words:} Stochastic Euler equations; Stochastic Hypodissipative Navier-Stokes equations; Stochastic Convex Integration; Non-uniqueness; Stationary solutions.
%			
%			Mathematics Subject Classification (2020): 60H15; 35R60; 35Q30.
%}}}
%
%
%\begin{document}
%	
%	\title[Stochastic Euler and Hypo-NSE Equations]{{Non-uniqueness of H\"older continuous solutions for stochastic Euler and Hypodissipative Navier-Stokes equations}
%		\Addresses}
%	\author[U. Koley and K. Kinra]{ Kush Kinra$^{1}$ and Ujjwal Koley$^{1,\ast}$}

\begin{document}
\title[Stochastic Euler and Hypo-NSE Equations]{Non-uniqueness of H\"older continuous solutions for stochastic Euler \\and Hypodissipative Navier-Stokes equations}
 
\date{\today}

%\subjclass[2010]{35A02; 35R60}

\keywords{Stochastic Euler equations; Stochastic Hypo-dissipative Navier-Stokes equations; Stochastic Convex Integration; Non-uniqueness; Stationary solutions.}
%\thanks{The first author acknowledges the support of the Department of Atomic Energy,  Government of India, under project no.$12$-R$\&$D-TFR-$5.01$-$0520$, and DST-SERB SJF grant DST/SJF/MS/$2021$/$44$. The second author acknowledges the support of the Simons Foundation (962572, KY) and thanks Prof. Carl Mueller for valuable discussions, and discussions during MEXT Promotion of Distinctive Joint Research Center Program JPMXP0619217849.}

\author[Kinra]{Kush Kinra}
\address{Centre For Applicable Mathematics (CAM), Tata Institute of Fundamental Research, PO Box 6503, GKVK Post Office, Bangalore 560065, India}
\email{kushkinra@gmail.com}
\author[Koley]{Ujjwal Koley}
\address{Centre For Applicable Mathematics (CAM), Tata Institute of Fundamental Research, PO Box 6503, GKVK Post Office, Bangalore 560065, India}
\email{ujjwal@math.tifrbng.res.in}

\begin{abstract}
We construct infinitely many H\"older continuous, global-in-time, and stationary solutions to the stochastic Euler equations and the hypodissipative Navier-Stokes equations, taking values in the space $C(\R;C^{\vartheta})$. For the Euler case, the H\"older exponent $\vartheta$ satisfies $0<\vartheta<\frac{5}{7}\beta$ with $0<\beta< \frac{1}{200}$, while for the hypodissipative Navier-Stokes equations, $\beta$ must additionally satisfy $0<\beta< \min\left\{ \frac{2(1-2\alpha)}{21}, \frac{1}{200}\right\}$. The construction relies on a modified stochastic convex integration scheme, which is central to the analysis. This scheme incorporates Beltrami flows as building blocks and carefully tracks inductive estimates, both pathwise and in expectation. These refinements allow us to achieve improved H\"older regularity for solutions to the underlying stochastic equations, advancing the scope of convex integration techniques in the stochastic setting.
\end{abstract}

\maketitle

\tableofcontents

\section{Introduction}\setcounter{equation}{0}\label{sec1}
\subsection{The equations}
In this article, we are interested in Euler and hypodissipative Navier-Stokes equations perturbed by additive noise in a three-dimensional torus $\mathbb{T}^3={\R^3}/{(\mathrm{2\pi}\mathbb{Z})^3}$. The stochastic hypodissipative Navier-Stokes equations are given by
\begin{equation}\label{eqn_stochatic_u-SHNSE}
	\left\{
	\begin{aligned}
		\d\u+\left[\nu(-\Delta)^{\alpha}\u+\mathrm{div}\left(\u \otimes \u \right)+\nabla p\right]\d t&=\d \mathrm{W}, &&\text{ in } \ \R\times\mathbb{T}^3,\\
		\mathrm{div}\; \u&=0, &&\text{ in } \ \R\times\mathbb{T}^3,
		%\\ \u(0)&=\u_0.
	\end{aligned}
	\right.
\end{equation}
where $\u(t,x):\R\times\mathbb{T}^3\to\R^3$, $p(t,x):\R\times\mathbb{T}^3\to\R$ represent the velocity field and pressure field, respectively, at time $t$ and position $x$, and viscosity coefficient $\nu>0$. Here $\W$ is a $GG^{\ast}$-Wiener process on some probability space $(\Omega,\mathcal{F},\mathbb{P})$, where $G$ is a Hilbert-Schmidt operator from $U$ to $\L^2_{\sigma}$ for some Hilbert space $U$ and $\L^2_{\sigma}$ denotes the space of $\L^2$ functions which are mean-free and divergence-free. The dissipation is fractional in the sense that it assumes the form of a fractional power $(-\Delta)^{\alpha}$ of
the Laplacian (\cite{Roncal+Stinga_2016}), and hypodissipative in the sense that $\alpha\in(0,\frac{1}{2})$.  
%The operator $(-\Delta)^{\alpha}$ is the (non-local) diffusive operator, whose Fourier series is given
%by 
%\begin{align*}
%	(-\Delta)^{\alpha}f(x) := \sum_{k\in\Z^3}|k|^{2\alpha}\hat{f}_k e^{i k\cdot x},
%\end{align*}
%for any integrable function $f$. 
Moreover, the stochastic Euler equations are given by %(\eqref{eqn_stochatic_u-SHNSE} with $\nu=0$)
\begin{equation}\label{eqn_stochatic_u-SEE}
	\left\{
	\begin{aligned}
		\d\u+\left[\mathrm{div}\left(\u \otimes \u \right)+\nabla p\right]\d t&=\d \mathrm{W}, &&\text{ in } \ \R\times\mathbb{T}^3,\\
		\mathrm{div}\; \u&=0, &&\text{ in } \ \R\times\mathbb{T}^3.
	\end{aligned}
	\right.
\end{equation}
Throughout this work, we focus on analytically weak solutions which satisfies stochastic hypodissipative Navier-Stokes equations \eqref{eqn_stochatic_u-SHNSE} and stochastic Euler equations \eqref{eqn_stochatic_u-SEE} in the following sense:     
\begin{definition}\label{AWS}
	 A triplet $((\Omega,\mathcal{F},\{\mathcal{F}_{t}\}_{t\in\R},\mathbb{P}),\u,\W)$ is called an analytically weak solution to the stochastic hypodissipative Navier-Stokes equations \eqref{eqn_stochatic_u-SHNSE} and stochastic Euler equations \eqref{eqn_stochatic_u-SEE}  for $\nu>0$ and $\nu=0$, respectively, provided
	\begin{enumerate}
		\item $(\Omega,\mathcal{F},\{\mathcal{F}_{t}\}_{t\in\R},\mathbb{P})$ is a stochastic basis with a complete right continuous filtration;
		\item $\W$ is an $\R^3$-valued, divergence-free and spatial mean free, two-sided $GG^{\ast}$-Wiener process with respect to the filtration $\{\mathcal{F}_{t}\}_{t\in\R}$;
		\item the velocity $\u\in C(\R\times\T3)$ $\mathbb{P}$-a.s. and is $\{\mathcal{F}_{t}\}_{t\in\R}$-adapted;
		\item for every $-\infty<s\leq t<\infty$, the following holds $\mathbb{P}$-a.s. 
		\begin{align*}
			&\langle\u(t),\upsilon\rangle + \int_{s}^{t}\langle \diver( \u(r)\otimes\u(r)),\upsilon \rangle\d r 
			  =  \langle\u(s),\upsilon\rangle - \nu \int_{s}^{t}\langle(-\Delta)^{\alpha}\u(r),\upsilon\rangle \d r  + \langle\W(t)-\W(s),\upsilon\rangle
		\end{align*}
	for all $\upsilon\in C^{\infty}(\T3)$ with $\diver\upsilon=0.$
	\end{enumerate}
\end{definition}
Our first aim is to prove non-uniqueness of solutions in the class of H\"older continuous functions under suitable assumption on noise.
Secondly, we prove   that global stationary solutions exist in the H\"older space and are not unique. The interpretation of stationarity in this context is the shift invariance of laws of governing solutions on the space of trajectories (see Definition \ref{SS} below and \cite{Hofmanova+Zhu+Zhu_Arxiv}).  More precisely, we set up the following joint trajectory space for the driving Wiener process and the solution:
\begin{align*}
	\mathcal{T} := C(\R;C^{\kappa})\times C(\R;C^{\kappa}),
\end{align*}
for some $\kappa>0$. Also,  assume that $S_{t}$, $t\in\R$, is a shift on trajectories which is given by 
\begin{align*}
	S_{t}(\u,\W)(\cdot):= (\u(\cdot+t),\W(\cdot+t)-\W(t)), \;\;\; t\in\R,\;\; (\u,\W)\in\mathcal{T}.
\end{align*}
 We take into account the second component's shift regulates differently to guarantee that the shift $S_{t}\W$ stays a Wiener process for a Wiener process $\W$.
\begin{definition}\label{SS}
	 A triplet $((\Omega,\mathcal{F},\{\mathcal{F}_{t}\}_{t\in\R},\mathbb{P}),\u,\W)$  is called a stationary solution to the hypodissipative Navier-Stokes equations \eqref{eqn_stochatic_u-SHNSE}  and stochastic Euler equations \eqref{eqn_stochatic_u-SEE} provided it satisfies \eqref{eqn_stochatic_u-SHNSE} and \eqref{eqn_stochatic_u-SEE}, respectively, in the sense of Definition \ref{AWS}  with shift invariant law, that is, 
	\begin{align*}
		\mathcal{L}[S_{t}(\u,\W)]=\mathcal{L}[\u,\W], \;\;\;\; \text{ for all } \;\; t\in\R.
	\end{align*} 
\end{definition}

\subsection{Convex integration related results} 
Convex integration has its beginnings, at least in the deterministic setting, in the work of Nash \cite{Nash_1954} and Kuiper \cite{Kuiper_1955} on $C^1$-isometric embeddings of Riemannian manifolds. In the context of fluid flow models, convex integration techniques have been used successfully by De Lellis and Sz\'ekelyhidi \cite{DeLellis+Szekelyhidi_2009} to construct non-unique weak solutions for the incompressible Euler equations in the class of $\L^{\infty}$ functions. Since their method was based on Tartar's plane wave analysis (\cite{Tartar_1979}), it was not supporting to construct continuous solutions.  A significant advancement occurred when De Lellis and Sz\'ekelyhidi (\cite{DeLellis+Szekelyhidi_2013}) made use of Beltrami waves (introduced in \cite{Constantin+Majda_1988} long back) as building blocks. Later, in \cite{DeLellis+Szekelyhidi_2014}, they were able to construct infinitely many weak solutions in the H\"older space $C^{\frac{1}{10}-}$ with the help of Beltrami waves, which proved to be a significant step towards well-known Onsager's conjecture (\cite{Onsage_1949}). Finally, the author in \cite{Isett_2018} was able to provide a rigorous proof of Onsager's conjecture, see \cite{Buckmaster+DeLellis+Szekelyhidi+Vicol_2019} also. A new class of construction blocks, known as Mikado flows, was taken into consideration by Daneri and Sz\'ekelyhidi in \cite{Daneri+Szekelyhidi_2017}, which led to significant advancements over the original techniques. A short while later, the authors in \cite{Buckmaster+Vicol_2019} presented an additional kind of building blocks, known as intermittent Beltrami flows, to demonstrate the non-uniqueness of weak solutions to the 3D Navier-Stokes equations. We also refer readers to \cite{Buckmaster+Vicol_2019_Notes} for a detailed study of convex integration techniques.% { We refer readers to the work \cite{DeRosa_2019} for the non-uniqueness result for hypodissipative Navier-Stokes equations.}
 
 Given this long list of deterministic results on non-uniqueness, one would naturally wonder if the situation is different when stochastic forces are present. In fact, it is a well-known that certain PDEs can be regularized by applying random forces to the equation under the right circumstances. This is because some ill-posed deterministic problems have well-posed stochastic counterparts. To put it briefly, the phenomenon of regularization by noise occurs when there is enough active noise, acting non-trivially in multiple directions, to push solutions away from singularities in the underlying vector field, see \cite{Bagnara+Maurelli+Xu_Arxiv_2023,Coghi+Maurelli_Arxiv_2023,Flandoli+Gubinelli+Priola_2010,Flandoli+Luo_2021}, etc. %,Gess_2018,Krylov+Rockner_2005
 
 Given these outcomes, the ill-posedness results from convex integration techniques previously described did not exclude the possibility that their stochastic counterparts could be well-posed.  But, the authors in \cite{Hofmanova+Zhu+Zhu_2024} dashed widespread expectations that (pathwise) uniqueness may exist for well-known fluid dynamical stochastic partial differential equations (SPDEs). More precisely, the authors in \cite{Hofmanova+Zhu+Zhu_2024} introduced a stochastic version of convex integration technique and establish the non-uniqueness in law of stochastic 3D Navier-Stokes equations. Note that the authors in \cite{Hofmanova+Zhu+Zhu_2024} used stopping time arguments to produce the required result. Later, many mathematicians followed the work \cite{Hofmanova+Zhu+Zhu_2024} and obtained non-uniqueness of many other stochastic models, see \cite{Chen+Dong+Zhu_2024,Hofmanova+Zhu+Zhu_2022,Hofmanova+Zhu+Zhu_2023,Koley+Yamazaki_Arxiv_2022,Lu_submitted,Rehmeier+Schenke_2023,Yamazaki_2024}, and references therein. Thereafter, the authors in \cite{Chen+Dong+Zhu_2024} develop a new stochastic convex integration scheme which allow us to avoid stopping time arguments, see also \cite{Hofmanova+Zhu+Zhu_Arxiv,Lu+Zhu_Arxiv}, etc. More specifically, the authors in \cite{Chen+Dong+Zhu_2024} were able to iterate inductive estimates in expectation in the stochastic convex integration scheme and utilized stationary Mikado flows as building blocks. Recently, the authors in \cite{Lu+Zhu_Arxiv} used Beltrami waves as building blocks and iterated inductive estimates both pathwise as well as in expectation in their inductive scheme.  This makes the calculations easier and improves the solutions' regularity.

 The goal of this work is to implement the concepts of \cite{Chen+Dong+Zhu_2024,Lu+Zhu_Arxiv} to stochastic hypodissipative Navier-Stokes equations as well as stochastic Euler equations. In particular, inspired by the work of \cite{Chen+Dong+Zhu_2024,Lu+Zhu_Arxiv}, we demonstrate existence of infinitely many \emph{global (in-time)} solutions by implementing the idea of iterating both pathwise estimates and expectation estimates in the inductive scheme. Note that convex integration techniques have also been applied to the fractional (hypodissipative as well as hyperdissipative) Navier-Stokes equations, in two and three spatial dimensions, and in both the deterministic and stochastic cases, we refer the readers to the works \cite{Colombo+DeLellis+DeRosa_2018,DeRosa_2019,Luo+Titi_2020,Yamazaki_2022_SIAM}, and references therein. Moreover, the Euler equations (deterministic and stochastic) have a rich literature in the context of convex integration, we refer readers to the works \cite{Buckmaster+DeLellis+Isett+Szekelyhidi_2015,Buckmaster+DeLellis+Szekelyhidi+Vicol_2019,Buckmaster+Vicol_2019_Notes,Daneri+Szekelyhidi_2017,DeLellis+Kwon_2022,DeLellis+Szekelyhidi_2009,DeLellis+Szekelyhidi_2013,DeLellis+Szekelyhidi_2014,Giri+Kwon_2022,Hofmanova+Zhu+Zhu_2022,Hofmanova+Zhu+Zhu_Arxiv,Isett_2018,Lu_submitted,Lu+Zhu_Arxiv}, and references therein.

\subsection{Scope of the paper}
As already studied in the literature, one can establish existence of infinitely many global  (in-time) and stationary solutions to the three-dimensional stochastic Euler equations perturbed by an additive noise in $\L^2$ (\cite{Hofmanova+Zhu+Zhu_Arxiv}) and $C^{\vartheta}$ for some small $\vartheta>0$ (\cite{Lu+Zhu_Arxiv}) as well. More specifically, very recent works \cite{Lu_submitted,Lu+Zhu_Arxiv} by L\"u et. al. established existence of H\"older continuous solutions for the system \eqref{eqn_stochatic_u-SEE} which is far away from the $C^{\frac13-}$ regularity (see Remark \ref{Compare} below). However, for deterministic Euler equations, it has been established in \cite{Buckmaster+DeLellis+Szekelyhidi+Vicol_2019,Isett_2018} that one can construct the solutions belonging to the H\"older space $C^{\frac13-}$.  Therefore, in this context, there is a huge gap between results on deterministic and stochastic Euler equations, and our main aim is to bridge this gap. Indeed, in an attempt to reduce the gap, our first main result is focused on the non-uniqueness of the global (in-time) H\"older continuous solutions with H\"older exponent $\vartheta$ to the stochastic hypodissipative Navier-Stokes equations \eqref{eqn_stochatic_u-SHNSE} and stochastic Euler equations \eqref{eqn_stochatic_u-SEE} for  a noteworthy H\"older exponent $\vartheta>0$. 
We also mention that, convex integration solutions for stochastic hypodissipative Navier-Stokes equations have been discussed in \cite{Rehmeier+Schenke_2023,Yamazaki_2024}, and these results are based on the stopping time arguments. Let us now state our first main result for \eqref{eqn_stochatic_u-SHNSE} and \eqref{eqn_stochatic_u-SEE}, whereas the proofs follow from Theorems \ref{MR-SHNSE} and \ref{MR-SEE}, respectively.
\begin{theorem}\label{FMR1}
     Assume that $\{\mathcal{F}_{t}\}_{t\in\R}$ is the normal filtration generated by the Wiener process $\W$, $\alpha\in(0,\frac12)$ and  $\Tr((-\Delta)^{\frac{5}{2}}GG^{\ast})<\infty$, then there exist infinitely many analytically weak solutions to \eqref{eqn_stochatic_u-SHNSE} in the sense of Definition \ref{AWS}, which belongs to $C(\R;C^{\vartheta})$ for any $\vartheta\in(0,\frac57\beta)$ with $\beta\in\left(0, \min \left\{ \frac{2(1-2\alpha)}{21}, \frac{1}{200}\right\}\right)$.
\end{theorem}
\begin{theorem}\label{FMR2}
	  Let $\{\mathcal{F}_{t}\}_{t\in\R}$ be the normal filtration generated by the Wiener process $\W$ and $\Tr((-\Delta)^{\frac{5}{2}}GG^{\ast})<\infty$, then there exist infinitely many analytically weak solutions to \eqref{eqn_stochatic_u-SEE} in the sense of Definition \ref{AWS}, which belongs to $C(\R;C^{\vartheta})$ for any $\vartheta\in(0,\frac57\beta)$ with  $\beta\in\left(0,\frac{1}{200}\right)$.
\end{theorem}
\begin{remark}\label{Compare}
Here, we shall mention the difference between the previous works and this work for stochastic Euler equations. In the work \cite{Lu_submitted} (see \cite[Remark 1.2]{Lu_submitted}), the author demonstrate the existence of H\"older continuous solutions to system \eqref{eqn_stochatic_u-SEE} with $\vartheta\in\left(0 \; , \; \frac{1}{73\cdot (62)^5}\right)$, whereas in \cite{Lu+Zhu_Arxiv} (see \cite[Theorem 1.2]{Lu+Zhu_Arxiv}), the authors obtained similar result for $\vartheta\in\left(0 \; , \; \frac{1}{120\cdot 7^5}\right)$. In this work, we have constructed solutions to system \eqref{eqn_stochatic_u-SEE} for any $\vartheta\in(0,\frac57\beta)$ with $\beta\in\left(0,\frac{1}{200}\right)$.
\end{remark}

The second result establishes the existence and non-uniqueness of stationary solutions to stochastic hypodissipative Navier-Stokes equations \eqref{eqn_stochatic_u-SHNSE} and stochastic Euler equations \eqref{eqn_stochatic_u-SEE} in the H\"older space. Let us now state our second main result for \eqref{eqn_stochatic_u-SHNSE} and \eqref{eqn_stochatic_u-SEE}.

\begin{theorem}
	Suppose that $\alpha\in(0,\frac12)$ and  $\Tr((-\Delta)^{\frac{5}{2}}GG^{\ast})<\infty$, then there exist infinitely many stationary solutions to \eqref{eqn_stochatic_u-SHNSE} in the sense of Definition \ref{SS}. Furthermore, solution $\u$ belongs to $C(\R;C^{\vartheta})$ for any $\vartheta\in(0,\frac57\beta)$ with $\beta\in\left(0,\min\left\{ \frac{2(1-2\alpha)}{21}, \frac{1}{200}\right\}\right)$ satisfying
	\begin{align*}
		\sup_{t\in\R}\mathbb{E}\left[\sup_{t\leq s\leq t+1}\|\u(s)\|_{C^{\vartheta}}\right]<\infty.
	\end{align*}
\end{theorem}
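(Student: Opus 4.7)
The plan is to implement a Krylov--Bogoliubov type argument on the joint trajectory space $\mathcal{T}=C(\R;C^{\kappa})\times C(\R;C^{\kappa})$, using the uniformly--in--time bounded global solutions that the refined version of Theorem \ref{FMR1} produces. The starting point is to observe that the convex integration scheme underlying Theorem \ref{FMR1}, which iterates inductive estimates both pathwise and in expectation, can be arranged to yield a solution $\u$ whose increments are controlled uniformly in shifts of the time origin. Concretely, as a first step I would extract from the iteration a bound of the form
\begin{equation*}
\sup_{t\in\R}\E\!\left[\sup_{t\leq s\leq t+1}\|\u(s)\|_{C^{\vartheta}}\right]<\infty,
\end{equation*}
together with an analogous estimate for a fractional time-difference seminorm of $\u-\z$, where $\z$ is the stochastic convolution solving the linear part. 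The noise gives $\z$ good time regularity under the trace assumption on $(-\Delta)^{3/2+\sigma-\alpha}GG^{\ast}$, while the nonlinear remainder $\u-\z$ inherits $C^1_t$ bounds from the convex integration stage count in the usual way.

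The second step is the standard Krylov--Bogoliubov averaging. Let $\mu=\mathcal{L}(\u,\W)$ on $\mathcal{T}$ and set
\begin{equation*}
\mu_T:=\frac{1}{T}\int_0^T (S_t)_\#\mu\,\d t,\qquad T>0.
\end{equation*}
Using the uniform moment bound above, together with a compact embedding $C^{\vartheta}\hookrightarrow C^{\vartheta'}$ for $\vartheta'<\vartheta$ and the uniform time-regularity just mentioned, an Arzel\`a--Ascoli type argument on each interval $[-N,N]$ shows that $\{\mu_T\}_{T\geq 1}$ is tight on $\mathcal{T}$ (endowed with the locally uniform topology in a slightly smaller H\"older regularity). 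Since $\mathcal{T}$ is a quasi-Polish space I would invoke the Jakubowski--Skorokhod representation theorem to extract a subsequence $\mu_{T_n}\rightharpoonup\tilde\mu$ and, on a new stochastic basis $(\tilde\Omega,\tilde{\mathcal{F}},\{\tilde{\mathcal{F}}_t\}_{t\in\R},\tilde{\mathbb{P}})$, random variables $(\tilde\u,\tilde\W)$ realizing $\tilde\mu$, with $\tilde\W$ still a two-sided $GG^\ast$-Wiener process. Shift invariance $\mathcal{L}[S_t(\tilde\u,\tilde\W)]=\mathcal{L}[\tilde\u,\tilde\W]$ for all $t\in\R$ then follows from the usual computation $(S_t)_\#\mu_{T_n}-\mu_{T_n}\to 0$ weakly as $T_n\to\infty$.

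The third step is to show that $(\tilde\u,\tilde\W)$ still solves \eqref{eqn_stochatic_u-SHNSE} in the sense of Definition \ref{AWS}. Testing against $\upsilon\in C^\infty(\Td)$ with $\diver\upsilon=0$, the linear terms pass to the limit by continuity of the integrands in the $C^{\vartheta'}$-topology, and the nonlinear term $\diver(\u\otimes\u)$ passes to the limit because $\vartheta'$ can be chosen strictly positive, so that products are continuous on the support of $\mu_{T_n}$ in the compactly embedded space. The stochastic integral term is recovered from the Wiener process identification as in \cite{Hofmanova+Zhu+Zhu_Arxiv,Chen+Dong+Zhu_2024}, using a martingale problem formulation and L\'evy's characterization. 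Finally, the moment bound for $\tilde\u$ follows by lower semicontinuity of $\upsilon\mapsto\E[\sup_{t\leq s\leq t+1}\|\upsilon(s)\|_{C^{\vartheta}}]$ under weak convergence and stationarity, which removes the dependence on $t$.

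Non-uniqueness of stationary solutions is then obtained by inserting into the above machine the infinitely many distinct global solutions produced by Theorem \ref{FMR1}: one prescribes at the base level of the scheme different values for an observable that is invariant under time averaging (for instance the expected energy at a fixed time, or a correlation of the solution with a chosen divergence-free test field), and shows that these observables pass to the limit under $\mu_{T_n}\rightharpoonup\tilde\mu$, distinguishing the resulting stationary laws. The main obstacle I anticipate is the first step: squeezing from the convex integration iteration a uniform-in-$t$ moment bound that is strong enough to yield tightness on all of $\R$ simultaneously, rather than only on a finite interval, while keeping the regularity $\vartheta$ unchanged. This requires that the cost of shifting the reference time $t$ in the inductive pathwise and expectation estimates be absorbed at each level, exactly as done in \cite{Chen+Dong+Zhu_2024,Lu+Zhu_Arxiv}, and it is here that the interplay between the trace condition on $GG^\ast$ and the hypodissipation exponent $\alpha$ enters the final range of admissible $\beta$.
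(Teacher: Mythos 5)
Your proposal follows essentially the same route as the paper: obtain a uniform-in-time $C^{\beta'}_t C^{\beta''}_x$ moment bound from the iteration (the paper does this in the first part of the proof of Theorem~\ref{MR-SS-SHNSE}, by interpolating the inductive bounds on $\v_{q+1}-\v_q$ and $\z_{q+1}-\z_q$), take ergodic averages $\mu_T$, establish tightness via Arzel\`a--Ascoli, extract a Krylov--Bogoliubov limit and realize it by Jakubowski--Skorokhod, then verify shift invariance by the telescoping argument. One remark: in the last identification step you invoke a martingale problem and L\'evy characterization to recover the stochastic integral, but because the noise is additive the weak formulation in Definition~\ref{AWS} only involves the increment $\W(t)-\W(s)$ pathwise; the paper exploits this by observing that the set $A$ of trajectories $(\u,\W)\in\mathcal{T}$ satisfying the weak PDE is closed and has full $\mu_{T_n}$-mass, so the limit identification is deterministic and no martingale machinery is needed. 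This is a cleaner way to handle your third step but does not change the overall architecture.
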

\begin{theorem}
	Suppose that  $\Tr((-\Delta)^{\frac{5}{2}}GG^{\ast})<\infty$, then there exist infinitely many stationary solutions to \eqref{eqn_stochatic_u-SEE} in the sense of Definition \ref{SS}. Furthermore, solution $\u$ belongs to $C(\R;C^{\vartheta})$ for any $\vartheta\in(0,\frac57\beta)$ with  $\beta\in\left(0,\frac{1}{200}\right)$ satisfying
	\begin{align*}
		\sup_{t\in\R}\mathbb{E}\left[\sup_{t\leq s\leq t+1}\|\u(s)\|_{C^{\vartheta}}\right]<\infty.
	\end{align*}
\end{theorem}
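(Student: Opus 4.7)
The plan is to layer a Krylov--Bogoliubov type averaging argument on top of Theorem \ref{FMR2}. First, I would invoke Theorem \ref{FMR2} to obtain a global analytically weak solution $((\Omega,\mathcal F,\{\mathcal F_t\},\mathbb P),\u,\W)$ of \eqref{eqn_stochatic_u-SEE} belonging to $C(\R;C^\vartheta)$. A key ingredient, which I would extract from the proof of Theorem \ref{FMR2} rather than from its statement, is that the expectation-based inductive estimates of the convex-integration scheme are uniform in time; consequently one obtains
$$\sup_{t\in\R}\E\Bigl[\sup_{t\le s\le t+1}\|\u(s)\|_{C^\vartheta}\Bigr]\le M<\infty$$
together with an analogous increment bound controlling the time-Hölder regularity of the trajectories.

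Second, on the Polish trajectory space $\mathcal T=C(\R;C^\kappa)\times C(\R;C^\kappa)$ with $0<\kappa<\vartheta$, I would form the time-averaged laws
$$\nu_T:=\frac{1}{T}\int_0^T \mathcal L\bigl[S_t(\u,\W)\bigr]\,\d t,\qquad T>0,$$
and prove tightness of $\{\nu_T\}_{T>0}$ using the uniform $C^\vartheta$ bound, the compact embedding $C^\vartheta\hookrightarrow C^\kappa$, and the time-regularity coming from the integral form of \eqref{eqn_stochatic_u-SEE}. Prokhorov then furnishes a weakly convergent subsequence $\nu_{T_n}\rightharpoonup\nu$, and a standard telescoping computation (the difference $\int F\circ S_r\,\d\nu_T-\int F\,\d\nu_T$ is $O(|r|/T)$ for bounded continuous $F$) shows that $\nu$ is shift invariant, $(S_r)_\#\nu=\nu$ for all $r\in\R$.

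Third, I would apply Jakubowski's extension of the Skorokhod representation theorem to realise $\nu$ as the law of a pair $(\tilde\u,\tilde\W)$ on a new probability space and verify that $(\tilde\u,\tilde\W)$ is an analytically weak solution of \eqref{eqn_stochatic_u-SEE} in the sense of Definition \ref{AWS}: the linear terms and the nonlinearity $\diver(\u\otimes\u)$ pass to the limit by a.s.\ convergence from Skorokhod together with the uniform $C^\vartheta$ control, while $\tilde\W$ is identified as a two-sided $GG^*$-Wiener process with respect to the normal filtration generated by $(\tilde\u,\tilde\W)$ via Lévy's characterisation. The required moment estimate in the conclusion is inherited through Fatou from the corresponding bound at the level of $\nu_T$. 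Non-uniqueness of the resulting stationary solutions is then inherited from Theorem \ref{FMR2}: by choosing inputs in the convex-integration scheme (e.g.\ prescribed energy profiles) for which the one-dimensional marginals already differ at some fixed time, the distinction survives the averaging and the weak limit.

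The hardest step I anticipate is the identification in the third paragraph: simultaneously (i)~passing the quadratic nonlinearity through a weak limit whose objects live only in a Hölder (non-Hilbert) class, (ii)~identifying $\tilde\W$ as a Wiener process adapted to a right-continuous filtration to which $\tilde\u$ is also adapted, and (iii)~ensuring that the $C^\vartheta$-regularity (not merely $C^\kappa$) survives in the limit so that the moment inequality in the statement can be asserted. Once the uniform-in-time expectation bound inherited from the modified inductive scheme of Theorem \ref{FMR2} is in hand, these points should follow by now-standard arguments in the spirit of Hofmanov\'a--Zhu--Zhu \cite{Hofmanova+Zhu+Zhu_Arxiv} and L\"u--Zhu \cite{Lu+Zhu_Arxiv}, combining Fatou, Skorokhod, and Lévy's characterisation.
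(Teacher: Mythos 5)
Your proposal is correct and follows essentially the same route as the paper: uniform-in-time moment bounds inherited from the convex-integration scheme, Krylov--Bogoliubov ergodic averaging of $\mathcal{L}[S_t(\u,\W)]$, tightness via a compact embedding of H\"older spaces, Jakubowski--Skorokhod to pass to the limit, the standard telescoping argument for shift invariance, and non-uniqueness inherited from the corresponding global-in-time theorem. One small place where the paper is more direct than your sketch: for tightness you suggest obtaining time-regularity ``from the integral form'' of the equation, whereas the paper instead extracts it straight from the scheme by summing $\EN\v_{q+1}-\v_q\EN_{C^{\beta'}_tC^{\beta''}_x,2r}$ (using interpolation between the $C^0$ and $C^1_{t,x}$ inductive bounds and the $C^{1/2-\delta}_t$-regularity of $\z_q$), which avoids wrestling the nonlinear term $\diver(\u\otimes\u)$ for time H\"older estimates.
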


\subsection{Organization of the article}

 The organization of the rest of the article is as follows: In Section \ref{sec2}, we assemble the basic notations that are utilized throughout the paper.  The main focus of our proofs of Theorems \ref{FMR1} and \ref{FMR2} is Sections \ref{sec3}, \ref{sec4}, and \ref{sec5}, where analytically weak solutions are constructed via stochastic convex integration. To prove our main Theorem \ref{MR-SHNSE}, we present the main Proposition \ref{Iterations} in Section \ref{sec3}. The establishment of stochastic convex integration with pathwise estimates, namely the demonstration of iteration Proposition \ref{Iterations}, is the focus of Sections \ref{sec4} and \ref{sec5}. Using the results from Section \ref{sec3}, we prove the existence of non-unique stationary solutions to the stochastic hypodissipative Navier-Stokes equations \eqref{eqn_stochatic_u-SHNSE} in Section \ref{sec6}. Section \ref{sec7} has shown that the three-dimensional stochastic Euler equations \eqref{eqn_stochatic_u-SEE} have infinitely many global (in-time) and stationary solutions.  Proof of multiple key findings that we employ in the sequel can be found in Appendix \ref{SupportingResults}. The construction and features of Beltrami waves are recalled in Appendix \ref{Beltrami} from \cite{Buckmaster+Vicol_2019_Notes}. The estimations for the transport equations that are extensively utilized in the proof of Proposition \ref{Iterations} are discussed in Appendix \ref{Transport}. Some lemmas that are helpful in the sequel are provided in Appendix \ref{Useful-Lemmas}.

\section{Preliminaries}\setcounter{equation}{0}\label{sec2}
 The notation $a\lesssim b$ is used in the sequel to indicate if a constant $c > 0$ exists such that $a \leq cb$. 

\subsection{Function spaces} 

 Let  $\X$ be a given Banach space with the norm $\|\cdot\|_{\X}$ and $t\in\R$. The space of continuous functions from $[t,t+1]$ to $\X$ is denoted by ${\rm C}_{t}\X:={\rm C}([t,t+1];\X)$ and endowed with the norm
\begin{align*}
	\|\u\|_{{\rm C}_t\X}:=\sup_{t\leq s\leq t+1}\|\u(s)\|_{\X}. 
\end{align*}
For $\alpha\in(0,1)$,  the space of $\alpha$-H\"older continuous functions from $[t,t+1]$ to $\X$ is denoted by ${\rm C}_t^{\alpha}\X$ and equipped with the norm 
\begin{align*}
		\|\u\|_{{\rm C}_t^{\alpha}\X}:= \sup_{s,r\in[t,t+1], s\neq r}\frac{\|\u(s)-\u(r)\|_{\X}}{|s-r|^{\alpha}}  + \|\u\|_{{\rm C}_t\X}. 
\end{align*}
 Also, for $\alpha\in(0,1)$, the space of $\alpha$-H\"older continuous functions from $\T3$ to $\R^3$ is denoted by ${\rm C}^{\alpha}:={\rm C}^{\alpha}(\mathbb{T}^3)$ and equipped  with the norm 
\begin{align*}
	\|\u\|_{{\rm C}^{\alpha}}:= \sup_{x,y\in\T3, x\neq y}\frac{|\u(x)-\u(y)|}{|x-y|^{\alpha}}  + \sup_{x\in\T3}|\u(x)|. 
\end{align*}
 The space of standard $\L^p$-integrable functions from $\T3$ to $\R^3$ is denoted by $\L^p$. For $s>0$ and $p>1$, the Sobolev space $$\Wb^{s,p}:=\{\u\in\L^p; \;\|\u\|_{\Wb^{s,p}}:=\|(\Id-\Delta)^{\frac{s}{2}}\u\|_{\L^p}<\infty\}.$$ 
We set 
$$\L^2_{\sigma}:=\{\u\in\L^2; \; \int_{\T3}\u(x)\d x=\boldsymbol{0}, \; \diver\u=0\}.$$
For $s>0$, we also write $\H^s:=\Wb^{s,2}\cap\L^2_{\sigma}$. For $t\in\R$ and a domain $D\subset\R$, we write the space of ${\rm C}^N$-functions from $[t,t+1]\times\T3$ and $D\times\T3$ to $\R^3$, respectively, by ${\rm C}^N_{t,x}$ and ${\rm C}^N_{D,x},$ for any $N\in\N_0:=\N\cup\{0\}$. The spaces are endowed with the norms 
\begin{align*}
	\|\u\|_{{\rm C}^N_{t,x}} = \sum_{\substack{0\leq n+|\alpha|\leq N \\ n\in\N_0, \alpha \in\N_0^3}} \|\partial_t^{n}D^{\alpha}\u\|_{{\rm L}^{\infty}_{[t,t+1]}\L^{\infty}} \;\;\; \text{ and}  \;\;\; 
	\|\u\|_{{\rm C}^N_{D,x}} = \sum_{\substack{0\leq n+|\alpha|\leq N \\ n\in\N_0, \alpha \in\N_0^3}} \sup_{t\in D} \|\partial_t^{n}D^{\alpha}\u\|_{\L^{\infty}},
\end{align*}
respectively.
\begin{remark}
	 To indicate the trace-free part of the tensor product, we use the symbol $\mathring{\otimes}$. We represent the trace-free part of a tensor $T$ as $\mathring{T}:= T-\frac13 \Tr(T)\Id$.
\end{remark}
\subsection{Inverse divergence operator $\mathcal{R}$}\label{IDO-R}
Let us recall the definition of inverse divergence operator $\mathcal{R}$ from \cite[Definition 4.2]{DeLellis+Szekelyhidi_2013}. The operator $\mathcal{R}$ acts on a vector field $\u=(u^1,u^2,u^3)$ with $\int_{\T3}\u(x)\d x =\boldsymbol{0}$ as 
\begin{align*}%\label{IDO}
	(\mathcal{R}\u)^{ij} = (\partial_i\Delta^{-1}u^j+\partial_j\Delta^{-1}u^i)-\frac12 (\delta_{ij}+\partial_i\partial_j\Delta^{-1})\diver\Delta^{-1}\u,
\end{align*}
for $i,j\in\{1,2,3\}$. We also know that $\mathcal{R}\u(x)$ is a symmetric trace-free matrix for each $x\in\T3$, and $\mathcal{R}$ is a right inverse of the $\diver$ operator, that is, $\diver(\mathcal{R}\u)=\u$. Interestingly, from \cite[Theorem B.3]{Cheskidov+Luo_2022}, we also have for $1\leq p\leq\infty$
\begin{align}\label{IDO_B}
	\|\mathcal{R}\u\|_{\L^p}\lesssim \|\u\|_{\L^p}.
\end{align}

\subsection{Probabilistic elements}
  We use $\mathbb{E}$ to indicate the expectation under $\mathbb{P}$ for a given probability measure $\mathbb{P}$. We assume that $\W$ is a $\R^3$-valued two-sided $GG^{\ast}$-Wiener process with spatial mean zero and divergence-free. Here, $G$ is a Hilbert-Schmidt operator from $U$ to $\L^2_{\sigma}$ for some Hilbert space $U$ and the Wiener process $\W$ is defined on some probability space $(\Omega,\mathcal{F},\mathbb{P})$.

 Let $\X={\rm C}(\T3)$ or $\X={\rm C}^{\kappa}(\T3)$ be a given Banach space for some $\kappa>0$, $1\leq p<\infty$ and $\delta\in(0,\frac12)$. The norms on function spaces of random variables on $\Omega$ taking values in ${ C}(\R;\X)$ and ${ C}^{\frac12-\delta}(\R;\X)$, respectively, with bounds in ${\rm L}^p(\Omega;C({\rm I};\X))$ and ${\rm L}^p(\Omega ; C^{\frac12-\delta}({\rm I} ; \X))$ for bounded interval ${\rm I}\subset\R$, are denoted by 
\begin{align*}
	\EN\u\EN_{\X,p}^p:= \sup_{t\in\R}\mathbb{E}\left[\sup_{t\leq s\leq t+1}\|\u(s)\|_{\X}^p\right], \;\;\;\; \EN\u\EN_{C_t^{\frac12-\delta}\X,p}^p:= \sup_{t\in\R}\mathbb{E}\left[\|\u\|_{C_t^{\frac12-\delta}\X}^p\right].
\end{align*}
 Crucially, the bounds are independent of the interval's location within $\R$ and only rely on the length of ${\rm I}$. Furthermore, for some $\kappa>0$, we establish the appropriate norms with $\X$ substituted by $\L^1$ and $\H^{\frac32 +\kappa}$. The fact that $\u$ has a uniform moment of order $p$ locally in ${C}(\R \times \T3)$ is denoted as $\EN\u\EN_{\mathrm{C}^0,p}<\infty$ in the discussion that follows.

 %Importantly, the bounds solely depend on the length of the interval ${\rm I}$ and are independent of its location within $\R$. In addition, we define the corresponding norms with $\X$ replaced by $\L^1$ and $\H^{\frac32 +\kappa}$ for some $\kappa>0$. In the subsequent discussion, we refer to the property that $\u$ has a uniform moment of order $p$ locally in ${C}(\R \times \T3)$ as $\EN\u\EN_{\mathrm{C}^0,p}<\infty$.

\section{Stochastic convex integration set-up and first main result for \eqref{eqn_stochatic_u-SHNSE}}\setcounter{equation}{0}\label{sec3}

In order to avoid the stopping time arguments as discussed in the literature  (see \cite{Hofmanova+Zhu+Zhu_2022,Hofmanova+Zhu+Zhu_2023,Hofmanova+Zhu+Zhu_2024,Rehmeier+Schenke_2023,Yamazaki_2024}, etc. and references therein), the authors in \cite{Chen+Dong+Zhu_2024} introduced a new version of stochastic convex integration which allow us to obtain solution on whole real line $\R$, see also \cite{Hofmanova+Zhu+Zhu_Arxiv,Lu+Zhu_Arxiv}, etc. Inspired by aforementioned works, we are also interested in finding the solution on whole real line $\R$. More precisely, the authors in \cite{Chen+Dong+Zhu_2024} used stationary Mikado flows as building blocks and measured the iterative estimates in expectations in stochastic convex integration scheme. Due to the quadratic nature of the nonlinear term in the equation, it must estimate higher moments at step $q$ than the moment bounds necessary at step $q + 1$. It could be inferred from this that one has to bound all finite moments at every step, which could potentially blow up during iterations. Recently, the authors in \cite{Lu+Zhu_Arxiv} used  Beltrami waves as building blocks  analogous to the deterministic scenario shown in \cite{Buckmaster+Vicol_2019_Notes}, therefore they introduced pathwise as well as expectation estimates in their inductive scheme. In this work, we also use Beltrami waves as building blocks (see Appendix \ref{Beltrami}) to construct the velocity perturbation. To be more precise, this concept uses the cutoff method to regulate the noise growth, which allows pathwise estimates to be introduced throughout the inductive scheme. The desired uniform estimates are derived by combining this with moment bounds.  The benefit of this technique is that it only requires lower moments of the solutions and does not require higher moments, in contrast to the inductive scheme in \cite{Chen+Dong+Zhu_2024,Hofmanova+Zhu+Zhu_Arxiv} that relies solely on moment bounds. This change also aids in improving the solutions' regularity.

Next, our aim is to develop an iterative process that will enable us to prove Proposition \ref{Iterations}.   To achieve this, we break down the stochastic hypodissipative Navier-Stokes equations \eqref{eqn_stochatic_u-SHNSE} into two components: a random partial differential equation and a linear component that incorporates noise. In particular, we decompose $\u=\v+\z$ such that $\z$ solves the following stochastic linear system:
\begin{equation}\label{eqn_z_alpha}
	\left\{
	\begin{aligned}
		\d\z  + \left[\nu(-\Delta)^{\alpha}\z+\z\right]\d t &=\d \mathrm{W},\\
		\mathrm{div}\;\z&=0,
		%,\\		\z(0)&=\u_0,
	\end{aligned}
	\right.
\end{equation}
where $\W$ is $\R^3$-valued two-sided trace-class $GG^{\ast}$-Wiener process with spatial mean zero and divergence-free, and $\v$ solves the following non-linear random system:
\begin{equation}\label{eqn_random_v2}
	\left\{
	\begin{aligned}
		\partial_t\v  +\nu(-\Delta)^{\alpha}\v +\mathrm{div}\left((\v+\z) \otimes (\v+\z) \right)  -\z +\nabla p &=0,\\
		\mathrm{div}\;\v&=0.
		%,\\		\v(0)&=\boldsymbol{0}.
	\end{aligned}
	\right.
\end{equation}
The pressure term associated with $\v$ is denoted by $p$, and $\z$ is divergence-free based on the assumptions on the noise $\W$. 

 The regularity of $\z$ on a given stochastic basis $(\Omega,\mathcal{F},\{\mathcal{F}_{t}\}_{t\in\R},\mathbb{P})$ can be obtained in light of the factorization technique, with $\{\mathcal{F}_{t}\}_{t\in\R}$ serving as the standard filtration. Specifically, one can use the same reasoning as \cite[Theorem 5.16]{DaZ} to show the following result for regularity of $\z$. %The following proposition's proof is shown in Appendix \ref{A.1}.}

\begin{proposition}\label{thm_z_alpha}
	Suppose that $\Tr\left((-\Delta)^{\frac52}GG^{\ast}\right)<\infty$. Then for any $\delta\in(0,\frac12)$, $p\geq2$
	\begin{align}\label{Z-regularity}
		\sup_{t\in\R}	\mathbb{E}\left[\|\z\|^p_{C^{\frac12-\delta}_{t}C^{0}_x}\right]\lesssim \sup_{t\in\R}	\mathbb{E}\left[\|\z\|^p_{C^{\frac12-\delta}_{t}\H^{\frac52}}\right] \leq (p-1)^{\frac{p}{2}}L^{p},
	\end{align}
	where $L\geq \max\left\{1 , \frac{1+\bar{M}}{\sqrt{6r}}\right\}$ depends on $\Tr\left((-\Delta)^{\frac52}GG^{\ast}\right)<\infty$, $\delta$ and is independent of $p$. Here $\bar{M}$ and $r$ are same as in Proposition \ref{Iterations}. 
\end{proposition}
\begin{proof}
The proof of this proposition closely follows those of \cite[Theorem 5.16]{DaZ}, \cite[Proposition 3.1]{Lu+Zhu_Arxiv}, \cite[Proposition 3.2]{Hofmanova+Zhu+Zhu_Arxiv}, and several others. Therefore, we omit the details here. We note, however, that the condition $L\geq \max\left\{1 , \frac{1+\bar{M}}{\sqrt{6r}}\right\}$ is not necessary for the present proposition ($L\ge 1$ works), but will be required for the proof of Proposition \ref{Iterations} below.
\end{proof}

\subsection{Iterative proposition}\label{Itra+Pro}
Now, we apply the convex integration method to the nonlinear equation \eqref{eqn_random_v2}. The convex integration iteration is indexed by a parameter $q\in\N_0$. We consider an increasing sequence $\{\lambda_{q}\}_{q\in\N_0}$ which diverges to $\infty$, and a bounded sequence $\{\delta_{q}\}_{q\in\N_0}$ which decreases to $0$.  We choose $a \in\N$ sufficiently large, $b\in\N$ and $\beta\in(0,1)$ and let
\begin{align*}
	\lambda_{q} & =a^{b^{q}},\;\; \\  \delta_{0} = 25 r^2 L^4 , 
    \;\; \delta_{1}=3rL^2,\;\;  \delta_{q} & = \frac12\lambda_{2}^{2\beta}\lambda_{q}^{-2\beta},\;\; q\geq2.
\end{align*}
 At each step $q$, a pair $(\v_{q}, \mathring{R}_{q})$ is constructing solving the following system:
\begin{equation}\label{eqn_v_q}
	\left\{
\begin{aligned}
	\partial_t\v_q  +\nu(-\Delta)^{\alpha}\v_q  +\mathrm{div}\left((\v_q+\z_q) \otimes (\v_q+\z_q) \right)  -\z_q +\nabla p_q &=\diver \mathring{R}_q,  \\
	\mathrm{div}\;\v_q&=0.
%	\v_q(0)&=\boldsymbol{0}.
\end{aligned}
\right.
\end{equation}
 In the above, we define
\begin{align}\label{cutoff}
	\z_q(t,x) := \chi_q\left(\|\wi\z_q(t)\|_{C^0_{x}}\right)\wi\chi_q\left(\|\wi\z_q(t)\|_{C^1_{x}}\right)\wi\z_q(t,x),
\end{align}
where $\chi_q$ and $\wi\chi_q$  are non-increasing smooth functions such that
\begin{align}\label{cut-off_functions}
	\chi_q(x)=\begin{cases}
		1, &x\in[0,\frac{1}{7\cdot4\cdot2\cdot2}\lambda_{q}^{\frac13-\varepsilon}],\\
		0, &x\in(\frac{1}{7\cdot4\cdot2\cdot2}\lambda_{q}^{\frac13},\infty),
	\end{cases}
\;\; \text{ and } \;\;
\wi\chi_q(x)=\begin{cases}
	1, &x\in[0,\lambda_{q}^{\frac23-\varepsilon}],\\
	0, &x\in(\lambda_{q}^{\frac23},\infty),
\end{cases}
\end{align}
with their derivatives bounded by 1, which requires 
\begin{align}\label{Slope}
	\frac{1}{\lambda_{q}^{\frac13}-\lambda_{q}^{\frac13-\varepsilon}}\leq \frac{1}{7\cdot4\cdot2\cdot2} \;\; \text{ and }\;\; \frac{1}{\lambda_{q}^{\frac23}-\lambda_{q}^{\frac23-\varepsilon}}\leq 1,
\end{align}
and $\wi\z_q=\mathbb{P}_{\leq f(q)}\z$ with $f(q)=\lambda_{q}^{\frac13}$. Here, the Fourier multiplier operator $\mathbb{P}_{\leq f(q)}$ projects a function onto its Fourier frequencies $\leq f(q)$ in absolute value, and $\varepsilon$ is an additional small parameter (see Subsection \ref{CoP}). On the right side of \eqref{eqn_v_q}, $\mathring{R}_{q}$ is a $3\times3$ trace-free matrix, and we put the trace part into the pressure.

%\begin{remark}\label{Rem3.2}
%Notice that the cutoff function \eqref{cutoff} introduced in this article is different from the cutoff function used in \cite{Lu+Zhu_Arxiv} and plays a crucial role to enhance the regularity of the solution.  
%\end{remark}
\begin{remark}\label{Rem3.2}
Note that the cutoff function \eqref{cutoff} introduced in this article differs from the one used in \cite{Lu+Zhu_Arxiv} and plays a crucial role in enhancing the regularity of the solution. Indeed, by the definition of $\z_q$, we have 
	\begin{align}\label{z_q_C0}
		\|\z_q\|_{C^0_{t,x}} \leq \frac{1}{7\cdot4\cdot2\cdot2}\lambda_q^{\frac13} \leq \frac12\lambda_{q}^{\frac13} \;\;\; \text{ and }\;\;\; \|\z_q\|_{C^0_tC^1_{x}} \leq \lambda_{q}^{\frac23}.
	\end{align}
It is also worth noting that, due to the specific choice of the shapes of the cutoff functions $\chi_q$ and $\wi\chi_q$, we obtain different bounds for the $C^0_{t,x}$-norm and the $C^0_tC^1_x$-norm of $\z_q$. This represents an important departure from the work in \cite{Lu+Zhu_Arxiv}, where the authors derived the same bounds for both norms.
\end{remark}

\begin{remark}\label{Rem3.3}
	By the Sobolev embedding $\|\f\|_{C^0_x}\lesssim\|\f\|_{\H^{\frac52}}$ and \eqref{Z-regularity}, we observe that for any $p\geq2$ and $\delta\in(0,\frac12)$
	
    \begin{align}
		\EN \z_q\EN_{C^0,p} & \lesssim \EN\wi\z_q\EN_{\H^{\frac52},p}\lesssim \EN\z\EN_{\H^{\frac52},p}\lesssim (p-1)^{\frac12}L,\label{z_q_EN1}\\
		\EN \z_q\EN_{C^1,p} & \lesssim \EN \wi \z_q\EN_{C^1,p} \lesssim f(q)\EN\z\EN_{\H^{\frac52},p}\lesssim  (p-1)^{\frac12}L\lambda_{q}^{\frac13},\label{z_q_EN2}\\
		\EN\z_q \EN_{C_t^{\frac12-\delta}C^0_x,p} & \lesssim \EN \wi\z_q\EN_{C_t^{\frac12-\delta}C^1_x,2p} \EN \wi\z_q\EN_{C_t^{\frac12-\delta}C^0_x,2p} \lesssim \lambda_{q}^{\frac13}\EN\z\EN^2_{C_t^{\frac12-\delta}\H^{\frac52},2p}\lesssim (2p-1)L^2 \lambda_{q}^{\frac13},\label{z_q_EN3}\\
		\EN\z_q \EN_{C_t^{\frac12-\delta}C^1_x,p} & \lesssim \EN \wi\z_q\EN_{C_t^{\frac12-\delta}C^1_x,2p} \EN \wi\z_q\EN_{C_t^{\frac12-\delta}C^1_x,2p} \lesssim \lambda_{q}^{\frac23}\EN\z\EN^2_{C_t^{\frac12-\delta}\H^{\frac52},2p}\lesssim (2p-1)L^2 \lambda_{q}^{\frac23}.\label{z_q_EN4}
	\end{align} 
\end{remark}
The above estimates \eqref{z_q_EN1}-\eqref{z_q_EN4} will be used in the sequel.
Let us also give an estimate on $\z_{q+1}-\z_{q}$, which will be used in the sequel. The proof of the following lemma is given in Appendix \ref{A.2}.
\begin{lemma}\label{Lemma-z_diff_EN}
	Suppose that $\Tr\left((-\Delta)^{\frac52}GG^{\ast}\right)<\infty$. For any $p\geq2$, we have for $0<\varepsilon<\frac13$,
	\begin{align}\label{z_diff_EN}
		\EN\z_{q+1}-\z_{q}\EN_{C^0,p}\lesssim 
		 pL^2\lambda_{q}^{-(\frac13-\varepsilon)}.
	\end{align}
\end{lemma}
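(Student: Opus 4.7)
The plan is to decompose the difference $\z_{q+1}-\z_q$ into a high-frequency Fourier tail and a cutoff-mismatch piece, and to bound each separately in $\EN\cdot\EN_{C^0,p}$. Introducing the scalar cutoff factor
\begin{align*}
\phi_q(t):=\chi_q\bigl(\|\wi\z_q(t)\|_{C^0_x}\bigr)\wi\chi_q\bigl(\|\wi\z_q(t)\|_{C^1_x}\bigr)\in[0,1],
\end{align*}
so that $\z_q=\phi_q\wi\z_q$, we obtain the decomposition
\begin{align*}
\z_{q+1}-\z_q=\phi_{q+1}\bigl(\wi\z_{q+1}-\wi\z_q\bigr)+(\phi_{q+1}-\phi_q)\wi\z_q.
\end{align*}

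For the first summand, since $|\phi_{q+1}|\leq 1$ pointwise, it suffices to control $\wi\z_{q+1}-\wi\z_q=\mathbb{P}_{f(q)<|\cdot|\leq f(q+1)}\z$. This object has Fourier support in $\{|\xi|>f(q)\}$, so the standard frequency-localized estimate combined with the Sobolev embedding $H^{3/2+\kappa}\hookrightarrow C^0$ yields
\begin{align*}
\|\wi\z_{q+1}-\wi\z_q\|_{C^0_x}\lesssim f(q)^{-(\sigma-\kappa)}\|\z\|_{H^{3/2+\sigma}}=\lambda_q^{-(\sigma-\kappa)/3}\|\z\|_{H^{3/2+\sigma}},
\end{align*}
for any $\kappa\in(0,\sigma)$. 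Since $\sigma\geq 1$, one may pick $\kappa$ small enough that $(\sigma-\kappa)/3\geq 1/3-\varepsilon$; taking $p$-th moments of the sup-in-time norm via Proposition~\ref{thm_z_alpha} then gives a contribution of order $(p-1)^{1/2}L\,\lambda_q^{-(1/3-\varepsilon)}\lesssim pL^2\lambda_q^{-(1/3-\varepsilon)}$.

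For the second summand, the key observation is that $|\phi_{q+1}-\phi_q|\leq 1$ vanishes whenever $\phi_q=\phi_{q+1}=1$. Hence, in the supremum-in-time sense, the support of this contribution is contained in $B:=B_q\cup B_{q+1}$, where
\begin{align*}
B_j:=\Bigl\{\sup_{s\in[t,t+1]}\|\wi\z_j(s)\|_{C^0_x}>\tfrac{1}{7\cdot 4\cdot 2\cdot 2}\lambda_j^{\frac13-\varepsilon}\Bigr\}\cup\Bigl\{\sup_{s\in[t,t+1]}\|\wi\z_j(s)\|_{C^1_x}>\lambda_j^{\frac23-\varepsilon}\Bigr\}.
\end{align*}
The Cauchy--Schwarz inequality gives
\begin{align*}
\mathbb{E}\Bigl[\sup_{s\in[t,t+1]}\|(\phi_{q+1}-\phi_q)(s)\,\wi\z_q(s)\|_{C^0_x}^p\Bigr]\leq \mathbb{E}\bigl[\|\wi\z_q\|_{C^0_{[t,t+1],x}}^{2p}\bigr]^{1/2}\mathbb{P}(B)^{1/2}.
\end{align*}
Markov's inequality at exponent $m=2p$, combined with the moment bounds $\EN\wi\z_j\EN_{C^0,2p}\lesssim(2p-1)^{1/2}L$ and the Bernstein estimate $\|\wi\z_j\|_{C^1_x}\leq f(j)\|\wi\z_j\|_{C^0_x}$ (which yields $\EN\wi\z_j\EN_{C^1,2p}\lesssim\lambda_j^{1/3}(2p-1)^{1/2}L$), produces $\mathbb{P}(B_j)\lesssim [(2p-1)^{1/2}L]^{2p}\lambda_j^{-2p(1/3-\varepsilon)}$ for each $j\in\{q,q+1\}$. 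Substituting these bounds and extracting the $p$-th root produces the required bound $\lesssim pL^2\lambda_q^{-(1/3-\varepsilon)}$ for this summand as well.

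The main technical obstacle is the cutoff-mismatch term. Unlike the Fourier-tail contribution, $|\phi_{q+1}-\phi_q|$ admits no useful pointwise bound because the two cutoffs act on different frequency projections of $\z$ with different thresholds. The crucial step is to localize to the exceptional event on which at least one cutoff fails to be identically $1$, and then to balance Markov's inequality at moment order $2p$ against an $L^{2p}(\Omega)$ moment of $\wi\z_q$, so that the combined prefactor ends up precisely of order $pL^2$ while the decay rate is exactly $\lambda_q^{-(1/3-\varepsilon)}$.
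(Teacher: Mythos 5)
Your proof is correct and takes essentially the same route as the paper: you decompose $\z_{q+1}-\z_q$ into the high-frequency tail $\phi_{q+1}(\wi\z_{q+1}-\wi\z_q)$ (bounded via frequency localization and Sobolev embedding) and the cutoff-mismatch term $(\phi_{q+1}-\phi_q)\wi\z_q$ (bounded by localizing to the exceptional event, Cauchy--Schwarz, and Chebyshev at moment $2p$), which is exactly the strategy of the paper's proof in Appendix~\ref{A.2}. The only superficial difference is that the paper splits the mismatch term into two pieces corresponding to $\chi$ and $\wi\chi$ separately, whereas you keep the scalar cutoff $\phi_q=\chi_q\wi\chi_q$ bundled, but the estimates and the resulting bound are identical.
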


Under the above assumptions, we present the following iteration proposition which plays a crucial role to prove the main results of this work. The proof of the following proposition is provided in Sections \ref{sec4} and \ref{sec5}.  
 
\begin{proposition}\label{Iterations}
	Suppose that $\Tr\left((-\Delta)^{\frac52}GG^{\ast}\right)<\infty$ and let $r>1$ be fixed. Then, for any $\beta\in(0,\min \left\{ \frac{2(1-2\alpha)}{21}, \frac{1}{200}\right\})$, there exists $a_0>1$ such that for any $a>a_0$ the following holds: Let $(\v_{q},\mathring{R}_{q})$ for some $q\in\N_0$ be an $\{\mathcal{F}_{t}\}_{t\in\R}$-adapted solution to the system \eqref{eqn_v_q} satisfying 
		\begin{align}
		\|\v_{q}\|_{C^0_{t,x}} &\leq \lambda_{q}^{\frac13}, \label{v_q_C0}\\
		\|\v_{q}\|_{C^1_{t,x}} & \leq \lambda_{q}^{\frac75}\delta_{q}^{\frac12},
        \label{v_q_C1}\\   
        \EN\v_{q}\EN_{C^0,2r} &    \leq 
             6rL^2 - \delta_{q}^{\frac12}   \label{v_q_EN}\\
		\|\mathring{R}_q\|_{C^0_{t,x}} & \leq \lambda_{q}^{\frac23}, \label{R_q_C0}\\
		\EN \mathring{R}_{q}\EN_{C^0,r} &\leq \delta_{q+1}   \label{R_q_EN}.
	\end{align}
There exists an {$\{\mathcal{F}_{t}\}_{t\in\R}$}-adapted process $(\v_{q+1},\mathring{R}_{q+1})$ which solves the system \eqref{eqn_v_q}, obeys \eqref{v_q_C0}-\eqref{R_q_EN} at the level $q+1$ and satisfies 
	\begin{align}\label{v_diff_EN}
		\EN\v_{q+1}-\v_{q}\EN_{C^0,2r} \leq \bar{M}  \delta_{q+1}^{\frac12},
	\end{align}
where $\bar{M}$ is a universal constant which will be fixed throughout the iteration.
\end{proposition}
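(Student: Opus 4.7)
The plan is to construct $(\v_{q+1},\mathring{R}_{q+1})$ via a Beltrami-wave based stochastic convex integration scheme. As a first step, I would regularize $(\v_{q},\mathring{R}_{q})$ by mollification in space and time at a scale $\ell$ chosen as a suitable negative power of $\lambda_{q+1}$, producing smoothed proxies $(\v_{\ell},\mathring{R}_{\ell})$. Using the inductive $C^{1}$ bound \eqref{v_q_C1}, the higher derivatives of $(\v_{\ell},\mathring{R}_{\ell})$ are controlled by the expected powers of $\ell^{-1}$, while the mollification error, measured in $C^{0}$ both pathwise and in $\L^{2r}(\Omega)$, is of order $\ell\lambda_{q}^{7/5}\delta_{q}^{1/2}$ and small enough to be absorbed into the new Reynolds stress.

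Next, the principal perturbation is defined as a finite sum of Beltrami waves at frequency $\lambda_{q+1}$,
\begin{equation*}
\w_{q+1}^{(p)}(t,x)=\sum_{k\in\Lambda}a_{k}(t,x)B_{k}\,e^{i\lambda_{q+1}k\cdot x},
\end{equation*}
where $\{B_{k}\}_{k\in\Lambda}$ is the family of Beltrami vectors recalled in Appendix \ref{Beltrami}, and the amplitudes $a_{k}$ are chosen via the geometric lemma for Beltrami vectors so that the slowly-varying part of $\w_{q+1}^{(p)}\otimes\w_{q+1}^{(p)}$ cancels $\mathring{R}_{\ell}$ up to a trace. Concretely, one introduces an energy-like term $\rho\gtrsim\lambda_{q+1}^{-\varepsilon}+|\mathring{R}_{\ell}|$, sets $a_{k}=\sqrt{\rho}\,\gamma_{k}(\Id-\mathring{R}_{\ell}/\rho)$ for the standard coefficients $\gamma_{k}$, and adds an incompressibility corrector $\w_{q+1}^{(c)}$ making $\w_{q+1}:=\w_{q+1}^{(p)}+\w_{q+1}^{(c)}$ divergence-free. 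The new velocity is then $\v_{q+1}:=\v_{\ell}+\w_{q+1}$; standard stationary-phase arguments, combined with the pointwise and moment bounds on $\rho$ and on $a_{k}$, yield \eqref{v_q_C0}--\eqref{v_q_EN} at level $q+1$ and the key difference estimate \eqref{v_diff_EN}.

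The new Reynolds stress is obtained by substituting $\v_{q+1}$ into \eqref{eqn_v_q} and inverting the divergence via $\mathcal{R}$ from Subsection \ref{IDO-R}; the $\L^{p}$ continuity \eqref{IDO_B} of $\mathcal{R}$ then controls every contribution in $C^{0}$ and in $\L^{r}(\Omega)$. The residual splits into six pieces: the \emph{linear error} $\mathcal{R}[\partial_{t}\w_{q+1}+\nu(-\Delta)^{\alpha}\w_{q+1}]$, in which the hypodissipation produces a factor $\lambda_{q+1}^{2\alpha}$ and forces the constraint $\beta<(1-2\alpha)/3$; the \emph{Nash/transport error} $\mathcal{R}\diver(\v_{\ell}\otimes\w_{q+1}+\w_{q+1}\otimes\v_{\ell})$; the \emph{oscillation error} from the high-frequency part of $\w_{q+1}^{(p)}\otimes\w_{q+1}^{(p)}$, handled by repeated integration by parts in the phase; the \emph{corrector error} involving $\w_{q+1}^{(c)}$; the \emph{mollification error} $\mathring{R}_{\ell}-\mathring{R}_{q}$; and the new \emph{noise-interaction errors} $\mathcal{R}\diver(\v_{\ell}\otimes(\z_{q}-\z_{q+1})+(\z_{q}-\z_{q+1})\otimes\v_{\ell})$ together with $\mathcal{R}\diver(\z_{q+1}\otimes\z_{q+1}-\z_{q}\otimes\z_{q})$, the latter estimated by Lemma \ref{Lemma-z_diff_EN}.

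The main obstacle is to simultaneously close the \emph{pathwise} bounds \eqref{v_q_C0}--\eqref{R_q_C0} and the \emph{expectation} bounds \eqref{v_q_EN}--\eqref{R_q_EN} at level $q+1$. For the pathwise part, the cutoff \eqref{cutoff} is essential: it guarantees the deterministic bounds \eqref{z_q_C0} on $\z_{q}$, so that every noise-related term is controlled by an explicit power of $\lambda_{q}$ and can be absorbed in $\lambda_{q+1}^{2/3}$ by taking $a$ large. For the expectation part, the moment bounds \eqref{z_q_EN1}--\eqref{z_q_EN4} depend only on moments of $\z$ and not of $\z_{q}$, so no moment explodes along the iteration. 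The delicate parameter balance requires that the worst error terms, namely the oscillation and Nash errors of order $\delta_{q+1}^{1/2}\lambda_{q}^{7/5}\lambda_{q+1}^{-1+\varepsilon}$ together with the noise contributions of order $\lambda_{q}^{4/3}\lambda_{q+1}^{-1+\varepsilon}$, be strictly smaller than $\delta_{q+2}$; the compatibility between the exponents $7/5$, $2/3$, $2\alpha$ and $\beta$ is precisely what forces $\beta<\min\{(1-2\alpha)/3,1/24\}$. Once $b$ is chosen large enough and $a$ is then taken sufficiently large depending on $r$, $L$ and all other parameters, the bounds \eqref{v_q_C0}--\eqref{v_diff_EN} at step $q+1$ follow, completing the inductive step.
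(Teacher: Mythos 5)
You have the broad scaffolding right—mollify, perturb with Beltrami waves whose amplitudes come from the geometric lemma, invert the divergence of the residual—but you miss the single ingredient that makes the iteration close at the claimed rate: \emph{the phase of the Beltrami waves must be transported along the Lagrangian flow of $\v_\ell+\z_\ell$}. You propose $\omega_{q+1}^{(p)}=\sum_k a_k(t,x)B_k e^{i\lambda_{q+1}k\cdot x}$ with a linear phase. With this choice, the part of your ``Nash/transport error'' $\mathcal{R}\diver(\v_\ell\otimes\omega_{q+1}+\omega_{q+1}\otimes\v_\ell)$ coming from $(\v_\ell\cdot\nabla)\omega_{q+1}^{(p)}$ has the gradient landing on the oscillatory phase, producing a term of size $\lambda_{q+1}\|\v_\ell\|_{C^0}\|a_k\|_{C^0}$. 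After applying $\mathcal{R}$ via the stationary phase lemma one gains only a factor $\lambda_{q+1}^{-1}$, leaving $\|\v_\ell\|_{C^0}\|a_k\|_{C^0}\sim \lambda_q^{1/3}\delta_{q+1}^{1/2}$ in expectation. With $b=2$ this is $\sim\lambda_q^{1/3-2\beta}$, and the target bound $\delta_{q+2}\sim\lambda_q^{-8\beta}$ would require $1/3-2\beta\leq-8\beta$, i.e., $1/3\leq-6\beta$, which is impossible. The paper avoids this by introducing time cutoffs $\eta_{j,k}$ and flow maps $\Phi_{k,j}$ solving \eqref{Phi_kj}, so that the phase is $e^{i\lambda_{q+1}\zeta\cdot\Phi_{k,j}(t,x)}$; then the material derivative $\partial_t+(\v_\ell+\z_\ell)\cdot\nabla$ annihilates the phase (see \eqref{3.29}), and $R_{trans}$ only sees derivatives of $a_\zeta$, which are $\ell^{-1}$-sized. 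This is a structural change, not just bookkeeping: without it the transport error is simply too large.

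A few smaller discrepancies. The paper's energy floor is $\delta_{q+1}$ (via $\varrho=\sqrt{\ell^2+(|\mathring{R}_\ell|+\delta_{q+1})^2}$), not a power $\lambda_{q+1}^{-\varepsilon}$; it must equal $\delta_{q+1}$ so that $\EN\omega^{(p)}_{q+1}\EN_{C^0,2r}\lesssim\delta_{q+1}^{1/2}$ as required by \eqref{v_diff_EN}. The paper fixes $b=2$ in Subsection \ref{CoP}; asking for ``$b$ large enough'' would worsen the H\"older exponent and is inconsistent with $\beta<1/24$. The paper's $R_{lin}$ is only $\mathcal{R}[\nu(-\Delta)^{\alpha}\omega_{q+1}]$, with $\partial_t\omega_{q+1}$ distributed across $R_{trans}$ and $R_{corr}$ precisely because of the flow-map structure. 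Finally, the estimates on $\mathcal{R}$ of oscillatory integrands rely on the stationary phase Lemma \ref{SPL}, which in turn requires the two-sided bound \eqref{Phi2} on $\nabla\Phi_{k,j}$; the plain $\L^p$ bound \eqref{IDO_B} does not by itself give the needed $\lambda_{q+1}^{-1}$ gain.
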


Proposition \ref{Iterations} helps us to prove the existence of an analytically weak solution in the sense of Definition \ref{AWS}, while the next proposition would help us to prove that such solutions are infinitely many. For that purpose, we will need the following convention:
\begin{itemize}
	\item Given an interval $\mathcal{I}\subseteq\R$ an a function $f$ on $\mathcal{I}\times\T3$, $\supp_{t}(f)$ will denote its temporal support, namely 
	\begin{align*}
		\supp_{t}(f):= \{t  \ : \mbox{ there exists } x \mbox{ with } f(x)\neq0\}.
	\end{align*}
	\item Given an interval $\mathcal{I}=[a,b]$, $|\mathcal{I}|$ will denote its length $(b-a)$ and $\mathcal{I}+c$ will denote the concentric enlarged interval $(a-c,b+c)$.
\end{itemize}
The following assertion, which is based on the concept presented in \cite{DeLellis+Kwon_2022}, is proved in Appendix \ref{A.3}.
\begin{proposition}[Bifurcating inductive proposition]\label{BIP}
	Let $(\v_{q},\mathring{R}_{q})$ be as in the statement of Proposition \ref{Iterations}. For any interval $\mathcal{I}\subset \R$ with $|\mathcal{I}|\geq 3m_q$ (where $m_q$ is defined in \eqref{m_q}), we can produce a first pair $(\v_{q+1},\mathring{R}_{q+1})$ and a second pair $(\wi\v_{q+1},\widetilde{\mathring{R}}_{q+1})$ which share the same initial data, satisfy the same conclusions of Proposition \ref{Iterations} and additionally
	\begin{align}\label{BIP-1}
		\EN\v_{q+1}-\wi\v_{q+1}\EN_{\L^{2}_{x},2} \geq \delta_{q+1}^{\frac12}, \;\; \supp_{t}(\v_{q+1}-\wi\v_{q+1})\subset \mathcal{I}.
	\end{align}
	Moreover, if we are given two pairs $( \v_{q}, {\mathring{R}}_{q})$ and $(\wi\v_{q},\widetilde{\mathring{R}}_{q})$ satisfying \eqref{v_q_C0}-\eqref{R_q_EN} and 
	\begin{align*}
		\supp_{t}(\v_{q}-\wi\v_{q},\mathring{R}_{q}-\widetilde{\mathring{R}}_{q}) \subset \mathcal{J},
	\end{align*}
	for some interval $\mathcal{J}\subset\R$, we can exhibit corrected counterparts $( \v_{q+1}, {\mathring{R}}_{q+1})$ and $(\wi\v_{q+1},\widetilde{\mathring{R}}_{q+1})$ again satisfying the same conclusion of Proposition \ref{Iterations} together with the following control on the support of their difference:
	\begin{align*}
		\supp_{t}(\v_{q+1}-\wi\v_{q+1}, \mathring{R}_{q+1}-\widetilde{\mathring{R}}_{q+1}) \subset \mathcal{J}+\lambda_{q}^{-\frac85}.
	\end{align*}
\end{proposition}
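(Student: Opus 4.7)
The approach is to follow the bifurcation strategy developed in \cite{DeLellis+Kwon_2022} and adapt it to the present stochastic framework. The key observation is that the convex integration construction underlying Proposition \ref{Iterations} carries a discrete gauge freedom, namely the sign choice of individual Beltrami amplitudes, which leaves the quadratic cancellation of the principal Reynolds stress essentially invariant. Exploiting this, one can engineer two distinct perturbations on the same time window which still produce valid $(q+1)$-level pairs and coincide in the past.

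\textbf{Bifurcating pair.} Since $|\mathcal{I}|\geq 3m_q$, fix a smooth temporal cutoff $\eta_{\mathcal{I}}\colon\R\to[0,1]$ supported in $\mathcal{I}$, identically equal to $1$ on an inner subinterval $\mathcal{I}'\subset\mathcal{I}$ of length at least $m_q$, vanishing in a neighbourhood of $\inf\mathcal{I}$, and satisfying $\|\partial_t\eta_{\mathcal{I}}\|_{\L^\infty}\lesssim m_q^{-1}$. Run the scheme of Proposition \ref{Iterations} unchanged to obtain the first pair $(\v_{q+1},\mathring{R}_{q+1})$. For the second pair, modify the principal Beltrami perturbation inside the support of $\eta_{\mathcal{I}}$ by flipping the sign of one chosen Beltrami amplitude, interpolated smoothly by $\eta_{\mathcal{I}}$. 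The contribution of this summand to the quadratic form $\w\otimes\w$ is invariant under the sign flip, while its interactions with the other Beltrami summands are high frequency and enter only through the error accounting via the inverse divergence $\mathcal{R}$ and \eqref{IDO_B}. On $\mathcal{I}'$ the difference $\v_{q+1}-\wi\v_{q+1}$ equals twice the flipped summand, so by the orthogonality of distinct Beltrami modes one has $\|\v_{q+1}-\wi\v_{q+1}\|_{\L^2_x}\gtrsim\delta_{q+1}^{1/2}$ pointwise in $t\in\mathcal{I}'$; taking expectation and choosing $a_0$ large yields \eqref{BIP-1}. The support condition and the coincidence of initial data are immediate from $\supp\eta_{\mathcal{I}}\subset\mathcal{I}$.

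\textbf{Corrected counterparts.} For the second assertion, apply Proposition \ref{Iterations} separately to $(\v_q,\mathring{R}_q)$ and $(\wi\v_q,\widetilde{\mathring{R}}_q)$, using identical mollification kernels and temporal cutoffs. The perturbations at level $q+1$ depend on the input pair only through its space-time mollification, and the temporal mollification scale used in the construction is $\ell_t\lesssim\lambda_{q}^{-8/5}$. Since convolution enlarges the support by at most $\ell_t$, and since $(\v_q,\mathring{R}_q)$ and $(\wi\v_q,\widetilde{\mathring{R}}_q)$ agree outside $\mathcal{J}$, the mollified amplitudes, the resulting perturbations, and the new Reynolds stresses all coincide outside $\mathcal{J}+\lambda_q^{-8/5}$, as claimed.

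The main obstacle is to confirm that the sign-flipped construction still satisfies all the pathwise and expectation estimates \eqref{v_q_C0}--\eqref{R_q_EN} at level $q+1$ with the \emph{same} universal constants as the unflipped one. The critical new term is the time derivative $\partial_t\eta_{\mathcal{I}}$ acting on the flipped Beltrami piece; after applying $\mathcal{R}$ this contributes an error of size comparable to $m_q^{-1}\delta_{q+1}^{1/2}$ in the new stress, which must be absorbed into $\delta_{q+2}$ by virtue of the choice \eqref{m_q} of $m_q$. The $\C^0$ and $\C^1$ magnitudes of the perturbation itself are unaffected by the sign flip, so the pointwise bounds \eqref{v_q_C0}--\eqref{R_q_C0} transfer verbatim from the first pair to the second, and the $2r$-th moment bound \eqref{v_q_EN} and the $r$-th moment bound \eqref{R_q_EN} follow from the same expectation estimates established in the proof of Proposition \ref{Iterations}.
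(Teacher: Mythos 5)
Your overall strategy — flip the sign of Beltrami amplitudes on a temporal window inside $\mathcal{I}$ to produce a distinct but equally valid level-$(q+1)$ pair — is the right idea and matches the paper's. But the mechanism you propose for localizing the flip has a genuine gap. You introduce an \emph{additional} smooth cutoff $\eta_{\mathcal{I}}$ and replace a chosen amplitude $a_{\zeta_0}$ by something like $(1-2\eta_{\mathcal{I}})a_{\zeta_0}$, ``interpolating smoothly'' between $+a_{\zeta_0}$ and $-a_{\zeta_0}$. On the transition region where $\eta_{\mathcal{I}}\in(0,1)$, the squared amplitude becomes $(1-2\eta_{\mathcal{I}})^2 a_{\zeta_0}^2 < a_{\zeta_0}^2$, so the geometric identity \eqref{B1} (namely $c_*^{-1}\varrho\Id-\mathring{R}_\ell = \frac12\sum_j\sum_{\zeta\in\Lambda_j}a_\zeta^2(\Id-\zeta\otimes\zeta)$) is \emph{no longer satisfied}. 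This leaves an uncancelled low-frequency remainder in the new Reynolds stress of size comparable to $\delta_{q+1}$, which does not fit inside the required bound $\delta_{q+2}$ for \eqref{R_q_EN} at level $q+1$. Your claim that ``the contribution of this summand to the quadratic form is invariant under the sign flip'' only holds where $\eta_{\mathcal{I}}\in\{0,1\}$, i.e.\ outside the transition region; it is precisely there that the construction breaks. The time-derivative error from $\partial_t\eta_{\mathcal{I}}$ that you then try to absorb is a second, self-inflicted problem created by the extra cutoff.

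The paper sidesteps both issues by exploiting the temporal partition of unity $\{\eta_{j,k}\}$ that is \emph{already built into} the definition \eqref{ak} of the amplitudes. Since $|\mathcal{I}|\geq 3m_q$, one can find $j_0$ with $\supp_t\eta_{j_0,k}\subset\mathcal{I}$, and then replace $\Gamma_\zeta^{(j_0)}$ by $-\Gamma_\zeta^{(j_0)}$ for all $\zeta\in\Lambda_{j_0}$ (equivalently $a_\zeta\mapsto -a_\zeta$ on that partition element). Because $(-\Gamma_\zeta^{(j_0)})^2=(\Gamma_\zeta^{(j_0)})^2$, the geometric lemma \eqref{B1} holds \emph{identically} for the modified amplitudes, so the entire error accounting of Proposition \ref{Iterations} carries over verbatim with the same constants — no new cutoff, no new $\partial_t$ error, no broken identity. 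The difference $\omega_{q+1}-\wi\omega_{q+1}$ is automatically and smoothly supported in $\supp_t\eta_{j_0,k}\subset\mathcal{I}$. The lower bound \eqref{BIP-1} is then obtained from $|\omega^{(p)}_{q+1}-\wi\omega^{(p)}_{q+1}|^2 = 12\,\eta_{j_0,k}^2 c_*^{-1}\varrho + \text{(high-frequency oscillation)}$, where the oscillatory part is controlled by the stationary phase lemma and $\varrho\geq\delta_{q+1}$. Your second assertion (propagation of supports under mollification, enlarging $\mathcal{J}$ by $\ell=\lambda_q^{-8/5}$) is essentially correct and agrees with the paper.
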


The iteration starts at 
$%\begin{align*}
	\v_{0}=\boldsymbol{0}$ and  $\mathring{R}_{0}=-\mathcal{R}\z_0+\z_0  \mathring{\otimes} \z_0,$ %\end{align*}
where $\mathcal{R}$ denotes the inverse-divergence operator. It implies that \eqref{v_q_C0}-\eqref{v_q_EN} are fulfilled for $q=0$ immediately. From \eqref{z_q_C0}, we have 
\begin{align*}
	\|\mathring{R}_{0}\|_{C^0_{t,x}} & \leq \|\z_{0}\|^2_{C^0_{t,x}} + \|\z_{0}\|_{C^0_{t,x}} 
	  \leq \frac14\lambda^{\frac23}_{0} + \frac12\lambda^{\frac13}_{0}  \leq {\lambda^{\frac23}_{0}},
\end{align*}
and from \eqref{z_q_EN1}, we get
\begin{align*}
	\EN \mathring{R}_{0}\EN_{C^0,r} & \leq \EN\z_{0}\EN^2_{C^0,2r} + \EN\z_{0}\EN_{C^0,r} 
	  \leq 2rL^2+rL\leq \delta_{1}.
\end{align*}
 Hence \eqref{R_q_C0} and \eqref{R_q_EN} are satisfied at the level $q=0$.

\subsection{Main result} We have just established the first iteration. Therefore, in view of Proposition \ref{Iterations}, we demonstrate the following result for the system \eqref{eqn_stochatic_u-SHNSE}:

\begin{theorem}\label{MR-SHNSE}
    Suppose that $\Tr\left((-\Delta)^{\frac52}GG^{\ast}\right)<\infty$ and let $r>1$ be fixed. Then, for any $\beta\in \left(0,\min \left\{ \frac{2(1-2\alpha)}{21}, \frac{1}{200}\right\}\right)$, there exist infinitely many $\{\mathcal{F}_t\}_{t\in\R}$-adapted process $\u(\cdot)$ which belongs to $C(\R;C^{\vartheta})$ $\mathbb{P}$-a.s. for $\vartheta\in(0,\frac57\beta)$ and is an analytically weak solution to \eqref{eqn_stochatic_u-SHNSE} in the sense of Definition \ref{AWS}. Moreover, the solutions satisfy 
	\begin{align}\label{u_EN}
		\EN\u\EN_{C^{\vartheta},2r}<\infty.
	\end{align}
\end{theorem}
\begin{proof}
	As we know the existence of starting pair $(\v_{0},\mathring{R}_0)$, we are able to acquire a sequence of solutions $(\v_{q},\mathring{R}_q)$ in view of Proposition \ref{Iterations}. By \eqref{v_q_C1}, \eqref{v_diff_EN} and interpolation inequality, we establish the following summable series for any $\vartheta\in(0,\frac{5}{7}\beta)$:
	\begin{align*}
		\sum_{q\geq0}\EN\v_{q+1}-\v_{q}\EN_{C^{\vartheta},2r} & \leq \sum_{q\geq0} \EN\v_{q+1}-\v_{q}\EN_{C^{0},2r}^{1-\vartheta} \EN\v_{q+1}-\v_{q}\EN_{C^{1},2r}^{\vartheta} 
	\nonumber \\ &  \lesssim \sum_{q\geq0} \delta_{q+1}^{\frac{1-\vartheta}{2}}\lambda_{q+1}^{\frac75\vartheta} \delta_{q+1}^{\frac{\vartheta}{2}} 
	  \leq \sqrt{3r}L a^{\frac75 b \vartheta}+ \lambda_2^{\beta}\sum_{q\geq1}\lambda_{q+1}^{\frac75\vartheta-\beta}<\infty.
	\end{align*}
Therefore, there exists a limiting function $\v=\lim\limits_{q\to\infty}\v_{q}$ which belongs to ${\L}^{2r}(\Omega;C(\R;C^{\vartheta}))$. Since $\v_{q}$ is $\{\mathcal{F}_t\}_{t\in\R}$-adapted for every $q\in\N_0$, the limit $\v$ is $\{\mathcal{F}_t\}_{t\in\R}$-adapted as well. Now,  using interpolation inequality, and combining \eqref{z_q_EN2} and \eqref{z_diff_EN}, we demonstrate that for the same $\vartheta$ as above and any $p\geq2$
	\begin{align*}
	\sum_{q\geq0}\EN\z_{q+1}-\z_{q}\EN_{C^{\vartheta},p} & \leq \sum_{q\geq0} \EN\z_{q+1}-\z_{q}\EN_{C^{0},p}^{1-\vartheta} \EN\z_{q+1}-\z_{q}\EN_{C^{1},p}^{\vartheta} 
	\nonumber \\ & \lesssim \sum_{q\geq0} \left[pL^2\lambda_{q}^{-(\frac13-\varepsilon)}\right]^{1-\vartheta}\left[(p-1)^{\frac12}L\lambda_{q+1}^{\frac13}\right]^{\vartheta}
	= \left[pL^2\right]^{1-\vartheta} \left[(p-1)^{\frac12}L\right]^{\vartheta} \sum_{q\geq0} \lambda_{q}^{-(\frac13-\varepsilon)(1-\vartheta)+\frac{2\vartheta}{3}}<\infty,
\end{align*}
since $\vartheta<\frac{\frac13-\varepsilon}{1-\varepsilon}$. Then we obtain $\lim\limits_{q\to\infty}\z_{q}=\z$ in $\L^p(\Omega;C(\R;C^{\vartheta}))$ for any $p\geq2$. Furthermore, it also follows from \eqref{R_q_EN} that $\lim\limits_{q\to\infty}\mathring{R}_q=0$ in $L^1(\Omega;C(\R;C^{0}))$. Thus $\v$ is an analytically weak solution to \eqref{eqn_random_v2}. Hence $\u=\v+\z$ is an $\{\mathcal{F}_t\}_{t\in\R}$-adapted analytically weak solution to \eqref{eqn_stochatic_u-SHNSE} and the estimate \eqref{u_EN} holds for $\u$. 

On the other hand, fix $\bar{q}\in\N$. At the $\bar{q}$-th step using Proposition \ref{BIP}, we can produce two distinct pairs, one which we keep denoting as above and the other which we denote by $(\wi\v_{\bar{q}},\widetilde{\mathring{R}}_{\bar{q}})$ and satisfies \eqref{BIP-1}, namely
\begin{align*}
		\EN\v_{\bar{q}}-\wi\v_{\bar{q}}\EN_{\L^{2}_{x},2} \geq \delta_{\bar{q}}^{\frac12}, \;\; \supp_{t}(\v_{\bar{q}}-\wi\v_{\bar{q}})\subset \mathcal{I},
\end{align*}
with $\mathcal{I}=(10, 10+ 3m_{\bar{q}-1})$. Applying now the Proposition \eqref{Iterations}, we can build a new sequence $(\wi\v_{q},\widetilde{\mathring{R}}_{q})$ of approximate solutions which satisfy \eqref{v_q_C0}-\eqref{R_q_EN} and \eqref{v_diff_EN}, inductively. Analogously as above, this second sequence converges to an analytically weak solution (in the sense of Definition \ref{AWS}) $\wi\v$ to system \eqref{eqn_stochatic_u-SHNSE}. We also remark that for any $q\geq \bar{q}$,
\begin{align*}
	\supp_{t}(\v_{q}-\wi\v_{q})\subset \mathcal{I}+ \sum_{q=\bar{q}}^{\infty}\lambda_{q}^{-\frac85}\subset [9,\infty),
\end{align*}
where we choose $a$ to be even larger than chosen above, if necessary, and hence $\wi\v_{q}$ shares initial data with $\v_{q}$ for all $q$. Consequently, two solutions $\wi\v_{q}$ and $\v_{q}$ have the same initial data. However, the new solution $\wi\v$ differs from $\v$ because 
\begin{align*}
	\EN\v-\wi\v\EN_{\L^{2}_{x},2} & \geq \EN\v_{\bar{q}}-\wi\v_{\bar{q}}\EN_{\L^{2}_{x},2} - \sum_{q=\bar{q}+1}^{\infty}\EN\v_{q+1}-\v_{q}-(\wi\v_{q+1}- \wi\v_{q})\EN_{\L^{2}_{x},2}
	\nonumber\\ & \geq \EN\v_{\bar{q}}-\wi\v_{\bar{q}}\EN_{\L^{2}_{x},2} - (2\pi)^{\frac32} \sum_{q=\bar{q}+1}^{\infty}(\EN\v_{q+1}-\v_{q}\EN_{C^0,2}+\EN\wi\v_{q+1}- \wi\v_{q}\EN_{C^0,2})
	 \geq \delta_{\bar{q}}^{\frac12}- 2(2\pi)^{\frac32}\bar{M} \sum_{q=\bar{q}+1}^{\infty}\delta_{q+1}^{\frac12} > 0,
\end{align*}
where we choose $a$ sufficiently large such that $\delta_{\bar{q}}^{\frac12}> 2(2\pi)^{\frac32}\bar{M} \sum\limits_{q=\bar{q}+1}^{\infty}\delta_{q+1}^{\frac12}$. By changing the choice of time interval $\mathcal{I}$ and the choice of $\bar{q}$, we can easily generate infinitely many solutions. This completes the proof.
\end{proof}

\section{Convex integration scheme for \eqref{eqn_stochatic_u-SHNSE}}\setcounter{equation}{0}\label{sec4}
The main aim of this section is to give the proof of Proposition \ref{Iterations}, that is, for a given pair $(\v_{q}, \mathring{R}_q)$ satisfying the iterations \eqref{v_q_C0}-\eqref{R_q_EN}, there exists a pair $(\v_{q+1}, \mathring{R}_{q+1})$ satisfying the iterations \eqref{v_q_C0}-\eqref{R_q_EN} at the level $q+1$ and \eqref{v_diff_EN}.  First of all, we fix the parameters during the construction and continue with a mollification step. The construction of required pair is based on the work \cite{Lu+Zhu_Arxiv} where the authors adapted the ideas developed in the articles \cite{Buckmaster+Vicol_2019_Notes} and \cite{Hofmanova+Zhu+Zhu_Arxiv}. In order to define new iteration $v_{q+1}$, we define new perturbation $\omega_{q+1}$ such that $\v_{q+1}=\v_{q}+\omega_{q+1}$. More precisely, the construction of new amplitude functions $a_{\zeta}$ is an modification of the amplitude functions considered in \cite{Hofmanova+Zhu+Zhu_Arxiv} (see \eqref{varrho} and \eqref{ak} below), meanwhile, in accordance with \cite{Buckmaster+Vicol_2019_Notes}, we acquire an acceptable transport error by including the solutions for transport equations into Beltrami waves. We propose a cutoff function \eqref{cutoff} to control the growing amount of noise, since we need pathwise positive lower bounds of the solutions to the transport equations. Additionally, in Proposition \ref{Iterations}, we include pathwise estimations of the velocity field $\v_{q}$ in the inductive iteration. Finally, we construct new stress   $\mathring{R}_{q+1}$ with the help of $\v_{q}$ and $\omega_{q+1}$.

\subsection{Choice of parameters}\label{CoP}
We fix $b=5.75=\frac{23}{4}$ and choose any $0<\beta< \min\left\{ \frac{2(1-2\alpha)}{21}, \frac{1}{200}\right\}$ and $0<\varepsilon< \frac{1}{198.375}-\beta $. The parameter $a\in2^{3\N}$ has been chosen sufficiently large such that $a^{\frac13}\in\N$ and $224<a^{\varepsilon}$. The choice of $a$ guarantee $f(q)\in\N$ and both the inequalities in \eqref{Slope} hold.  The subsequent sections will make advantage of the aforementioned parameter requirements. We increase $a$ in the sequel to absorb various implicit and universal constants.

\subsection{Mollification}
To ensure smoothness during the construction, we substitute a mollified velocity field $\v_{\ell}$ for $\v_q$. To do this, we select a small parameter
\begin{align}\label{ell}
	\ell= \lambda_{q}^{-\frac85}.
\end{align}
 Let $\{\uppsi_{\epsilon}\}_{\epsilon>0}$ be a family of mollifiers on $\R^3$, and $\{\psi_{\epsilon}\}_{\epsilon>0}$ be a family of mollifiers with support in $(0,1)$. Note that we have considered one-sided mollifiers to reserve the adaptedness.  Let us define a mollification of $\v_{q},\mathring{R}_{q}$ and $\z_q$ in space and time by convolution as follows:
\begin{align*}
		\v_\ell:=(\v_{q}\ast_{x}\uppsi_{\ell})\ast_{t}\psi_{\ell}, \;\;\;  \mathring{R}_\ell&:=( \mathring{R}_{q}\ast_{x}\uppsi_{\ell})\ast_{t}\psi_{\ell},  \;\;\;   \text{ and }\;\;\; \z_\ell :=(\z_q\ast_{x}\uppsi_{\ell})\ast_{t}\psi_{\ell},
\end{align*}
where 
\begin{align}\label{Mol}
	\uppsi_{\ell}(\cdot):= \ell^{-3}\uppsi\left(\frac{\cdot}{\ell}\right) \;\;\text{ and }\;\;
	\psi_{\ell}(\cdot):= \ell^{-1}\psi\left(\frac{\cdot}{\ell}\right).
\end{align}
By the definition, it is immediate to see that $\v_{\ell},\mathring{R}_{\ell}$ and $\z_{\ell}$ are $\{\mathcal{F}_t\}_{t\in\R}$-adapted. From system \eqref{eqn_v_q}, we find that $(\v_{\ell},\mathring{R}_{\ell})$ satisfy the following system:
\begin{equation}\label{eqn_random_v_l}
	\left\{
	\begin{aligned}
		\partial_t\v_\ell +\nu(-\Delta)^{\alpha}\v_{\ell}  +\mathrm{div}\left((\v_\ell+\z_\ell) \otimes (\v_\ell+\z_\ell) \right) +\nabla p_\ell -\z_{\ell} &= \diver (\mathring{R}_{\ell}+R_{com1}),\\
			\mathrm{div}\;\v_\ell&=0,
			%\\ \v_{\ell}(0)&=\boldsymbol{0},
	\end{aligned}
	\right.
\end{equation}
where the commutator stress
\begin{align}\label{R_com1}
	R_{com1}:= (\v_\ell+\z_\ell) \mathring{\otimes} (\v_\ell+\z_\ell) - \left(\left((\v_q+\z_q) \mathring{\otimes} (\v_q+\z_q)\right)\ast_{x}\uppsi_{\ell}\right)\ast_{t}\psi_{\ell},
\end{align}
and 
\begin{align}\label{p_l}
p_{\ell} :=	(p_q\ast_{x}\uppsi_{\ell})\ast_{t}\psi_{\ell} -\frac13\left(|\v_\ell+\z_\ell|^2- (|\v_q+\z_q|^2\ast_{x}\uppsi_{\ell})\ast_{t}\psi_{\ell} \right).
\end{align}

\subsection{Velocity perturbation}\label{V+P}
It is well known from the literature of convex integration methods that one needs to guarantee  of smoothness of new velocity during the construction.  A perturbation of $\v_{\ell}$ will thus be used to construct the new velocity field $\v_{q+1}$:
\begin{align*}
	\v_{q+1}=\v_{\ell}+\omega_{q+1},
\end{align*}
where the perturbation $\omega_{q+1}$ will be constructed by following the work \cite{Buckmaster+Vicol_2019_Notes}. Next, we discuss transport equations and time cutoffs which will be helpful in the sequel to construct $\omega_{q+1}$.

\subsubsection{Flow maps and cutoffs}
One must estimate the transport error in order to estimate the new stress $\mathring{R}_{q+1}$, that is, the perturbation $\omega_{q+1}$ is transported by the flow of the vector field $\partial_t+(\v_{\ell}+\z_{\ell})\cdot\nabla$. The process of estimating the transport error has been discussed in \cite{Buckmaster+Vicol_2019_Notes} and \cite{Lu+Zhu_Arxiv} for deterministic and stochastic cases, respectively. As discussed in \cite{Buckmaster+Vicol_2019_Notes,Lu+Zhu_Arxiv}, to obtain the appropriate estimate,  we must substitute the nonlinear phase $\zeta\cdot\Phi(t,x)$, where $\Phi$ is transported by the aforementioned described vector field, for the linear phase $\zeta\cdot x$ in the definition of the Beltrami wave $\mathbb{W}_{\zeta,\lambda}$ (see Appendix \ref{Beltrami}). In comparison to the work \cite{Buckmaster+Vicol_2019_Notes}, the authors in \cite{Lu+Zhu_Arxiv} expanded the definition of $\Phi$ for $\R$ with respect to the time variable and we are also following the same.

Let us fix 
\begin{align}\label{m_q}
	m_q=\lambda_{q+1}^{-\frac34}\lambda_{q}^{-\frac34}\delta_{q+1}^{-\frac14}\delta_{q}^{-\frac14}.
\end{align}
 We divide $[k,k+1]$ into temporal intervals of size $m_q$ for any integer $k\in\mathbb{Z}$, and then solve transport equations on these intervals. For $j\in\{0,1,\ldots,\lceil m_q^{-1}\rceil\}$, we define the adapted map $\Phi_{k,j}:\Omega\times\R^3\times[k+(j-1)m_q,k+(j+1)m_q]\to \R^3$ as the $\mathbb{T}^3$-periodic solution of  
\begin{equation}\label{Phi_kj}
	\left\{	\begin{aligned}
		\left(\partial_t+(\v_{\ell}+\z_\ell)\cdot\nabla\right)\Phi_{k,j}&=0,\\
		\Phi_{k,j}(k+(j-1)m_q,x)&=x.
	\end{aligned}
	\right.
\end{equation}
 In Appendix \ref{Transport}, we give the detailed proof of the estimates of $\Phi_{k,j}$. There are two main estimates for $\Phi_{k,j}$ which are as follows:
\begin{align}
	&\sup_{t\in[k+(j-1)m_q,k+(j+1)m_q]}\|\nabla\Phi_{k,j}(t)-\Id\|_{C^0_x} \leq  \lambda_{q}^{-\frac{8}{5}} <\!< 1,\label{Phi1}\\
	\frac12\leq&	\sup_{t\in[k+(j-1)m_q,k+(j+1)m_q]}\|\nabla\Phi_{k,j}(t)\|_{C^0_x}\leq 2.\label{Phi2}
\end{align}

\begin{remark}
To obtain the estimate \eqref{Phi1}, pointwise bounds for both $\v_{q}$ and $\z_{q}$ are required. For this purpose, we introduced the cutoff function $\z_{q}$ (see \eqref{cutoff}) and the inductive estimate \eqref{v_q_C1}. This approach differs from that in \cite{Hofmanova+Zhu+Zhu_Arxiv}, where the authors considered only moment bounds in their inductive estimates. Moreover, the approach in this article differs from that of \cite{Lu+Zhu_Arxiv}, primarily due to the following two aspects:
\begin{itemize}
\item First, as noted in Remark~\ref{Rem3.2}, we construct two smooth truncation functions, $\chi_q$ and $\widetilde{\chi}_q$, which are used to define the cutoff function $\z_q$. This modification has a cascading effect on the estimation of various error terms, thereby leading to an improvement in the Hölder regularity of the solutions.
\item Second, unlike \cite{Lu+Zhu_Arxiv}, where the authors employed different energy functions to construct infinitely many weak solutions, our approach relies on a bifurcating induction proposition (see Proposition \ref{BIP}) to generate infinitely many solutions.
\end{itemize}
\end{remark}

We also let $\eta$ be a non-negative bump function supported in $(-1, 1)$, which is $1$ on $(-\frac14,\frac14)$ and such
that the square of the shifted bump functions
\begin{align*}
	\eta_j(t)=\eta(m_q^{-1}t-j)
\end{align*}
form a partition of unity
\begin{align*}
	\sum_{j} \eta_j^2 (t)=1,
\end{align*}
for all $t\in[0,1]$. We then extend the definition of $\eta$ to the whole real line $\R$. In particular, let $\eta^{(k)}(t)=\eta(t-k)$, then we know $\supp \eta^{(k)} \subset (k-1,k+1)$. Similarly, the shifted bump functions
\begin{align*}
	\eta_{j,k}(t)=\eta(m_q^{-1}(t-k)-j)
\end{align*}
form a partition of unity
\begin{align}\label{eta_jk}
	\sum_{j} \eta_{j,k}^2 (t)=1,
\end{align}
for all $t\in[k,k+1]$.

\subsubsection{Amplitudes} 
We will now start the process to construct the velocity perturbation $\omega_{q+1}$.  Since Proposition \ref{Iterations} uses moment bounds of $\mathring{R}_q$ to the iterative estimates, we must modify the amplitude functions $a_{\zeta}$ such that Lemma \ref{GL} can be used in our context.  Specifically, we first define $\varrho$ as follows
\begin{align}\label{varrho}
	\varrho(t,x)&:= \sqrt{\ell^2+|\mathring{R}_{\ell}(t,x)|^2}.
\end{align}
 Then, we define the amplitude function 
\begin{align}\label{ak}
	a_{\zeta}(t,x):=a_{q+1,j,\zeta}(t,x):=  c_{\ast}^{-\frac12} \varrho^{\frac12}\cdot\eta_{j,k}(t) \cdot \Gamma_{\zeta}^{(j)}\left(\Id-\frac{c_{\ast} \mathring{R}_{\ell}}{\varrho}\right),\;\; \zeta\in\Lambda_{j} \;\; \text{and} \;\; t\in[k,k+1],
\end{align}
where $\Gamma_{\zeta}^{(j)}$ is introduced in Appendix \ref{Beltrami} and $\Lambda_j\subset \mathbb{S}^2\cap\mathbb{Q}^3$ is finite subset. By the definition of $\varrho$, we have
\begin{align*}
	\left\|\Id-\left(\Id-\frac{c_{\ast} \mathring{R}_{\ell}(t,x)}{\varrho}\right)\right\|_{C^0_{t,x}} \leq c_{\ast}.
\end{align*}
Consequently, $\Id-\frac{c_{\ast} \mathring{R}_{\ell}}{\varrho}$ belongs to the domain of the function $\Gamma_{\zeta}^{(j)}$ and we obtain from \eqref{eta_jk} and Lemma \ref{GL} that
\begin{align}\label{B1}
c_{\ast}^{-1}	\varrho \Id - \mathring{R}_{\ell} &=\frac{1}{2}\sum_{j}\sum_{\zeta\in\Lambda_j}  a_{\zeta}^2 (\Id-\zeta\otimes \zeta),
\end{align}
holds pointwise.  Also, it is adequate to think of index sets $\Lambda_0$ and $\Lambda_1$ as having 12 elements, and we write $\Lambda_j=\Lambda_{j\mod 2}$ for $j\in\mathbb{Z}$.

\subsubsection{Construction of velocity perturbation $\omega_{q+1}$} 
Firstly, we define the principal part $\omega_{q+1}^{(p)}$ of the velocity perturbation $\omega_{q+1}$ using \eqref{ak}. For this purpose, we make use of Beltrami wave as presented in \cite{Buckmaster+Vicol_2019_Notes} (see \cite{Lu+Zhu_Arxiv} also), which we have recalled in Appendix \ref{Beltrami}. More precisely, for $t \in [k, k + 1]$, we define
\begin{align*}%\label{POU}
	\omega_{\zeta}(t,x):= a_{q+1,j,\zeta}(t,x) B_{\zeta}e^{i\lambda_{q+1}\zeta\cdot\Phi_{k,j}(t,x)},
\end{align*}
and 
\begin{align}\label{wp}
	\omega_{q+1}^{(p)}(t,x)&:=\sum_{j}\sum_{\zeta\in\Lambda_j} a_{q+1,j,\zeta}(t,x)B_{\zeta}e^{i\lambda_{q+1}\zeta\cdot\Phi_{k,j}(t,x)} \nonumber\\
	& =\sum_{j}\sum_{\zeta\in\Lambda_j} c_{\ast}^{-\frac12} \varrho^{\frac12}(t)\cdot\eta_{j,k}(t) \cdot \Gamma_{\zeta}^{(j)}\left(\Id- c_{\ast}\frac{ \mathring{R}_{\ell}(t,x)}{\varrho(t)}\right)\cdot B_{\zeta}e^{i\lambda_{q+1}\zeta\cdot\Phi_{k,j}(t,x)},
\end{align}
where $B_{\zeta}$ is an appropriate vector defined in Appendix \ref{Beltrami}. Since $a_{\zeta}$ and $\Phi_{k,j}(t,x)$ are $\{\mathcal{F}_{t}\}_{t\in\R}$-adapted, we deduce that $\omega_{q+1}^{(p)}$ is $\{\mathcal{F}_{t}\}_{t\in\R}$-adapted.

%\subsubsection{Incompressible corrector $\omega_{q+1}^{(c)}$}
Next, we define the incompressible corrector $\omega_{q+1}^{(c)}$. In order to make $\omega_{q+1}^{(p)}$ perfect curl and so divergence-free, we want to add a corrector to it. The following scalar phase function is helpful in achieving this
\begin{align*}
	\phi_{\zeta}(t,x):= e^{i\lambda_{q+1} \zeta \cdot (\Phi_{k,j}(t,x)-x)}, \;\;\; t\in[k,k+1].
\end{align*}
Next we define 
\begin{align*}
	\Wb_{\zeta}(x) :=B_{\zeta}e^{i\lambda_{q+1}\zeta\cdot x}.
\end{align*}
Since $\curl\Wb_{\zeta}=\lambda_{q+1}\Wb_{\zeta}$ (see Appendix \ref{Beltrami}), and $\phi_{\zeta}$ and $a_{\zeta}$ are scalar functions, we have 
\begin{align*}
	a_\zeta \phi_{\zeta} \Wb_\zeta =\frac{1}{\lambda_{q+1}} \curl\left(a_\zeta\phi_{\zeta}\Wb_\zeta\right)-\frac{1}{\lambda_{q+1}}\nabla\left(a_\zeta\phi_{\zeta}\right)\times\Wb_{\zeta}.
\end{align*}
We therefore define
\begin{align*}
	\omega_{\zeta}^{(c)}(t,x)&:= \frac{1}{\lambda_{q+1}} \nabla\left(a_\zeta\phi_{\zeta}\right)\times B_\zeta e^{i\lambda_{q+1}\zeta\cdot x} \nonumber\\
	& = \left(\frac{\nabla a_\zeta}{\lambda_{q+1}}+i a_\zeta (\nabla\Phi_{k,j}(t,x)-\Id)\zeta \right) \Wb_{\zeta}(\Phi_{k,j}(t,x)), \;\;\; t\in[k,k+1].
\end{align*}
The incompressibility corrector $\omega_{q+1}^{(c)}$ of perturbation is defined by
\begin{align}\label{wc}
	\omega_{q+1}^{(c)}(t,x)&:= \sum_{j}\sum_{\zeta\in\Lambda_j} \omega_{\zeta}^{(c)}(t,x)
\end{align}
for all $t\in[k,k+1]$, $k\in\mathbb{Z}$. Since the coefficients $a_{\zeta}$ and $\Phi_{k,j}(t,x)$ are $\{\mathcal{F}_{t}\}_{t\in\R}$-adapted, we deduce that $\omega_{q+1}^{(c)}$ is $\{\mathcal{F}_{t}\}_{t\in\R}$-adapted.

Finally, in view of \eqref{wp} and \eqref{wc}, the new velocity perturbation is defined as
\begin{align}\label{w_q+1_Curl}
	\omega_{q+1} := \omega_{q+1}^{(p)}+\omega_{q+1}^{(c)} = \frac{1}{\lambda_{q+1}}\sum_{j}\sum_{\zeta\in\Lambda_j}\curl \left(a_\zeta \phi_{\zeta} \Wb_\zeta \right),
\end{align}
and so clearly $\omega_{q+1}$ is mean zero, divergence-free and $\{\mathcal{F}_t\}_{t\in\R}$-adapted. At last, we define the new velocity field $\v_{q+1}$ as
\begin{align*}
	\v_{q+1}:= \v_{\ell} +\omega_{q+1}.
\end{align*}
Thus, by the previous discussion, it is also $\{\mathcal{F}_t\}_{t\in\R}$-adapted.

\subsection{New Reynolds stress $ \mathring{R}_{q+1}$}\label{NRS}
We have the system \eqref{eqn_v_q} at the level $q+1$ as follows:
\begin{equation}\label{eqn_v_q+1}
	\left\{
	\begin{aligned}
		\partial_t\v_{q+1}  +\nu(-\Delta)^{\alpha} \v_{q+1}   +\mathrm{div}\left((\v_{q+1}+\z_{q+1}) \otimes (\v_{q+1}+\z_{q+1}) \right)  -\z_{q+1} +\nabla p_{q+1} &=\diver \mathring{R}_{q+1}, \\
		\mathrm{div}\;\v_{q+1}&=0.
	\end{aligned}
	\right.
\end{equation}
Subtracting the system \eqref{eqn_random_v_l} from the system \eqref{eqn_v_q+1}, we get 
\begin{align*}
	& \diver \mathring{R}_{q+1} - \nabla p_{q+1} 
	\nonumber\\ & = \partial_t\v_{q+1} +\nu(-\Delta)^{\alpha} \v_{q+1}   +\mathrm{div}\left((\v_{q+1}+\z_{q+1}) \otimes (\v_{q+1}+\z_{q+1}) \right)  -\z_{q+1} 
	\nonumber\\ & = \partial_t(\v_{\ell} +\omega_{q+1}) +\nu(-\Delta)^{\alpha} (\v_{\ell} +\omega_{q+1})   +\mathrm{div}\left((\v_{\ell} +\omega_{q+1}+\z_{q+1}) \otimes (\v_{\ell} +\omega_{q+1}+\z_{q+1}) \right) 
	   -\z_{q+1} 
	\nonumber\\ & =   -\nabla p_\ell + \diver (\mathring{R}_{\ell}+R_{com1})+(\z_{\ell} -\z_{q+1}) 
	 + \partial_t \omega^{(p)}_{q+1} + \partial_t \omega^{(c)}_{q+1} +\nu(-\Delta)^{\alpha} \omega_{q+1}  
	\nonumber\\ & \quad + \diver \big(\v_{\ell}\otimes  \omega^{(p)}_{q+1} + \v_{\ell}\otimes \omega^{(c)}_{q+1}+\omega_{q+1}\otimes \v_{\ell}+  \omega^{(p)}_{q+1}\otimes  \omega^{(p)}_{q+1} + \omega^{(p)}_{q+1}\otimes \omega^{(c)}_{q+1}   +  \omega^{(c)}_{q+1}\otimes\omega_{q+1} \big)
	\nonumber\\& \quad + \diver\big(\v_{q+1}\otimes(\z_{q+1}-\z_{\ell})  +\omega_{q+1}\otimes \z_{\ell} + (\z_{q+1}-\z_{\ell})\otimes \v_{q+1}  +\z_{\ell}\otimes \omega_{q+1}\big)
 	 \nonumber\\&\quad +\diver\big( (\z_{q+1}-\z_{\ell})\otimes \z_{\ell} + \z_{q+1}\otimes(\z_{q+1}-\z_{\ell})\big)
 	 \nonumber\\& =  \underbrace{\nu(-\Delta)^{\alpha} \omega_{q+1} }_{\diver R_{lin}} + \underbrace{(\partial_t+(\v_{\ell}+\z_{\ell})\cdot\nabla)\omega^{(p)}_{q+1}}_{\diver R_{trans}} + \underbrace{(\omega_{q+1}\cdot\nabla)(\v_{\ell}+\z_{\ell})}_{\diver R_{Nash}}
 	 +\underbrace{\diver\big(\omega_{q+1}^{(p)}\otimes\omega_{q+1}^{(p)}+\mathring{R}_{\ell}\big)}_{\diver R_{osc}+\nabla p_{osc}}
 	    \nonumber\\& \quad + \underbrace{(\partial_t+(\v_{\ell}+\z_{\ell})\cdot\nabla)\omega^{(c)}_{q+1} + \diver (\omega_{q+1}^{(c)}\otimes\omega_{q+1}+\omega_{q+1}^{(p)}\otimes\omega_{q+1}^{(c)})}_{\diver R_{corr}+\nabla p_{corr}}
 	 \nonumber\\& \quad + \underbrace{\diver \big( \v_{q+1}\otimes(\z_{q+1}-\z_{\ell})   + (\z_{q+1}-\z_{\ell})\otimes \v_{q+1} + \z_{q+1}\otimes\z_{q+1} -\z_{\ell}\otimes \z_{\ell}\big) -(\z_{q+1}-\z_{\ell})}_{\diver R_{com2}+\nabla p_{com2}} 
 	 \nonumber\\& \quad + \diver R_{com1} -\nabla p_{\ell}.
\end{align*}
Here $R_{com1}$ and $p_{\ell}$ are defined in \eqref{R_com1} and \eqref{p_l}, respectively, and by making use of the operator $\mathcal{R}$ discussed in Subsection \ref{IDO-R}, we define
\begin{align*}
	R_{lin}&:=  \mathcal{R}\bigg[\nu(-\Delta)^{\alpha} \omega_{q+1}\bigg],\\
R_{trans}&:= \mathcal{R}\bigg[(\partial_t+(\v_{\ell}+\z_{\ell})\cdot\nabla)\omega^{(p)}_{q+1}\bigg], \\
R_{Nash} &:= \mathcal{R}\left[(\omega_{q+1}\cdot\nabla)(\v_{\ell}+\z_{\ell})\right], \\ 
%R_{osc}&:=  \mbox{Will be defined later},\\
R_{corr}& :=  \mathcal{R}\bigg[(\partial_t+(\v_{\ell}+\z_{\ell})\cdot\nabla)\omega^{(c)}_{q+1}\bigg] + \omega_{q+1}^{(c)}\mathring{\otimes}\omega_{q+1}+\omega_{q+1}^{(p)}\mathring{\otimes}\omega_{q+1}^{(c)}, \\
R_{com2}& :=   \v_{q+1}\mathring{\otimes}(\z_{q+1}-\z_{\ell})   + (\z_{q+1}-\z_{\ell})\mathring{\otimes} \v_{q+1} + \z_{q+1}\mathring{\otimes}\z_{q+1}
 - \z_{\ell}\mathring{\otimes}\z_{\ell} - \mathcal{R}(\z_{q+1}-\z_{\ell}), \\
 %p_{osc} &:=  \mbox{Will be defined later}, \\
 p_{corr}& :=  \frac{1}{3}\left(2\omega_{q+1}^{(p)}\cdot\omega_{q+1}^{(c)} +\left|\omega_{q+1}^{(c)}\right|^2\right), \\
 p_{com2}& := \frac13\big(\v_{q+1}\cdot(\z_{q+1}-\z_{\ell})   + (\z_{q+1}-\z_{\ell})\cdot \v_{q+1} + |\z_{q+1}|^2-|\z_{\ell}|^2)\big).
\end{align*}

In order to define $R_{osc}$ and $p_{osc}$, we first pay attention that for $j,j'$ such that $|j-j^{\prime}|\geq2$, we have $\eta_{j,k}(t)\eta_{j',k}(t)=0$. Second, we have $\Lambda_{j}\cap\Lambda_{j'}=\varnothing$ for $|j-j'|=1$, and by Lemma \ref{Beltrami_P} we have $$\diver(\Wb_{\zeta}\otimes\Wb_{\zeta'}+\Wb_{\zeta'}\otimes\Wb_{\zeta})=\nabla(\Wb_{\zeta}\cdot\Wb_{\zeta'})$$ (using $(A\cdot\nabla) B+(B\cdot\nabla)A=\nabla(A\cdot B)-A\times\nabla\times B-B\times\nabla\times A$). Therefore, in view of Lemma \ref{GL} and \eqref{B1}, we obtain
\begin{align*}%\label{Osc_error}
 &	\diver\big(\omega_{q+1}^{(p)}\otimes\omega_{q+1}^{(p)}+\mathring{R}_{\ell}\big)
	\nonumber\\ & =\diver\bigg(\sum_{j,j',\zeta,\zeta'}\omega_{\zeta}\otimes\omega_{\zeta'}+\mathring{R}_{\ell}\bigg)
	\nonumber\\ & = \diver \left(\sum_{j,\zeta}c_{\ast}^{-1}\varrho\cdot \eta_{j,k}^2 \{\Gamma_{\zeta}^{(j)}(\Id-c_{\ast}\varrho^{-1}\mathring{R}_{\ell})\}^2B_{\zeta}\otimes B_{-\zeta} + \mathring{R}_{\ell}\right)
	+ \diver \bigg(\sum_{j,j', \zeta+\zeta'\neq0}a_{\zeta}a_{\zeta'}\Wb_{\zeta}(\Phi_{k,j})\otimes\Wb_{\zeta'}(\Phi_{k,j'})\bigg)
	\nonumber\\ & = \diver \bigg(c_{\ast}^{-1}\varrho(\Id-c_{\ast}\varrho^{-1}\mathring{R}_{\ell})+\mathring{R}_{\ell}\bigg) +   \diver \bigg(\sum_{j,j', \zeta+\zeta'\neq0}a_{\zeta}a_{\zeta'} \phi_{\zeta}\phi_{\zeta'}\Wb_{\zeta}\otimes\Wb_{\zeta'}\bigg)
	\nonumber\\ & = \nabla(c_{\ast}^{-1}\varrho) +    \sum_{j,j', \zeta+\zeta'\neq0}\Wb_{\zeta}\otimes\Wb_{\zeta'} \nabla\left(a_{\zeta}a_{\zeta'} \phi_{\zeta}\phi_{\zeta'}\right) + \frac12 \sum_{j,j', \zeta+\zeta'\neq0}a_{\zeta}a_{\zeta'} \phi_{\zeta}\phi_{\zeta'}\nabla( \Wb_{\zeta}\cdot\Wb_{\zeta'})
	\nonumber\\ & = \nabla(c_{\ast}^{-1}\varrho) + \frac12\nabla\left( \sum_{j,j', \zeta+\zeta'\neq0}a_{\zeta}a_{\zeta'} \phi_{\zeta}\phi_{\zeta'}( \Wb_{\zeta}\cdot\Wb_{\zeta'})\right)
	+    \sum_{j,j', \zeta+\zeta'\neq0}\biggl\{\Wb_{\zeta}\otimes\Wb_{\zeta'}-\frac{\Wb_{\zeta}\cdot\Wb_{\zeta'}}{2}  \Id \biggr\} \nabla\left(a_{\zeta}a_{\zeta'} \phi_{\zeta}\phi_{\zeta'}\right).
\end{align*}
Therefore
\begin{align*}
	R_{osc} &:=  \sum_{j,j', \zeta+\zeta'\neq0} \mathcal{R} \bigg[\biggl\{\Wb_{\zeta}\otimes\Wb_{\zeta'}-\frac{\Wb_{\zeta}\cdot\Wb_{\zeta'}}{2} \Id \biggr\} \nabla\left(a_{\zeta}a_{\zeta'} \phi_{\zeta}\phi_{\zeta'}\right)\bigg],\\
	p_{osc} &:= c_{\ast}^{-1}\varrho + \frac12  \sum_{j,j', \zeta+\zeta'\neq0}a_{\zeta}a_{\zeta'} \phi_{\zeta}\phi_{\zeta'}( \Wb_{\zeta}\cdot\Wb_{\zeta'}).
\end{align*}

Finally, we have 
\begin{align}\label{R_q+1-split}
	\mathring{R}_{q+1} = R_{lin}+ R_{trans}+R_{Nash}+R_{osc}+R_{Corr}+R_{com1}+R_{com2},
\end{align}
and 
\begin{align*}
	p_{q+1}=p_{\ell} - p_{osc}-p_{corr} - p_{com2}.
\end{align*}
Note that $\omega_{q+1}^{(p)}$, $\omega_{q+1}^{(c)}$, $\v_{\ell}$, $\z_{\ell}$, $\v_{q+1}$ and $\z_{q+1}$ are $\{\mathcal{F}_t\}_{t\in\R}$-adapted, therefore $R_{q+1}$ and $p_{q+1}$ are also $\{\mathcal{F}_t\}_{t\in\R}$-adapted.

\section{Inductive estimates and proof of Proposition \ref{Iterations}}\setcounter{equation}{0}\label{sec5}
The main aim of this section is to demonstrate the proof of Proposition \ref{Iterations} by proving the all necessary inductive estimates for $\v_{q+1}$ and $\mathring{R}_{q+1}$. In the next proposition, we estimate the $C^N_{t,x}$-norm of the amplitude function $a_{\zeta}$ defined in Subsection \ref{V+P} (see \eqref{ak} above).

\begin{proposition}\label{5.1}
	Let $a_{\zeta}$ be given by \eqref{ak}. Then, we have 
	\begin{align}
    \|a_{\zeta}\|_{C_{t,x}^{0}} &   \lesssim \delta_{q+1}^{\frac12} + \|\mathring{R}_{q}\|_{C^{0}_{[t-1,t+1],x}}^{\frac12} \label{ak_0} \\
    \|a_{\zeta}\|_{C_{t,x}^{1}} &   \lesssim m_q^{-1}   (\delta_{q+1}^{\frac12} + \|\mathring{R}_{q}\|^{\frac12}_{C^{0}_{[t-1,t+1],x}} ),  \label{ak_1} \\
    \|a_{\zeta}\|_{C_{t,x}^{2}} &   \lesssim m_q^{-1}\lambda_q^{\frac{92}{15}} (\delta_{q+1}^{\frac12}+ \|\mathring{R}_{q}\|_{C^0_{[t-1,t+1],x}}^{\frac12}),  \label{ak_2} \\
		\|a_{\zeta}\|_{C_{t,x}^{N}} & \lesssim  m_q^{-N} \lambda_q^{3}
        , && \hspace{-20mm} \text{ for any } N\geq0, \label{ak_N}
	\end{align}
	where the implicit constants are independent of the parameter $q$.
\end{proposition}

\begin{proof}
First observe that,  for big enough $a>\max\left\{\frac{1}{(3rL^2)^{\frac{5}{8}}}, \frac{1}{2^{\frac{5}{46}}} \right\}$ and  $\beta <\frac{1}{200}$, we have $\ell\leq \delta_{q+1}$ for any $q\geq 0$. Also, note that $m_q^{-1} = \lambda_q^{\frac{27}{8}(\frac{3}{2}-\beta)}=\lambda_q^{5 + (\frac{1}{16} - \frac{27}{8}\beta)}$.
%($m_q^{-1} = \lambda_q^{\frac{1}{2}(b+1)(\frac{3}{2}-\beta)}$)  (let us assume $\frac{1}{2}(b+1)(\frac{3}{2}-\beta)>\frac{58}{15}$)
\vskip 2mm
\noindent
\textbf{Step 1.}  Here we give the proof of \eqref{ak_0}.
 From \eqref{varrho}, \eqref{ak}  and \eqref{C^n_M}, we get
\begin{align}\label{varrho_0}
	\|a_{\zeta}\|_{C_{t,x}^{0}} & \lesssim 	\|\varrho^{\frac{1}{2}}\|_{C^0_{t,x}} \lesssim \ell^{\frac12}+ \|\mathring{R}_{\ell}\|_{C^0_{t,x}}^{\frac12} \lesssim \delta_{q+1}^{\frac12}+ \|\mathring{R}_{q}\|_{C^0_{[t-1,t+1],x}}^{\frac12}.
	\end{align}
    \iffalse 
  and 
	\begin{align}
	%|a_{\zeta}| & \lesssim |\varrho^{\frac12}|
%	  \lesssim \ell^{\frac12} + |\mathring{R}_{\ell}(t,x)|^{\frac12} , \label{ak_absolute} \\
	   	\|a_{\zeta}\|_{C_{t,x}^{0}} & \lesssim  \|\varrho^{\frac{1}{2}}\|_{C^0_{t,x}}
        %\ell^{\frac12} + \|\mathring{R}_{\ell}\|_{C^{0}_{t,x}}^{\frac12}  
        \lesssim \delta_{q+1}^{\frac12} + \|\mathring{R}_{q}\|_{C^{0}_{[t-1,t+1],x}}^{\frac12}. %\label{ak_C0}
\end{align}
\fi 

\vskip 2mm
\noindent
\textbf{Step 2.} Here we give the proof of \eqref{ak_1}. 
%Define $[\mathring{R}_{\ell}(t,x)]_{ij} := a_{ij,\ell}$, so $|\mathring{R}_{\ell}%(t,x)|^2= \sum\limits_{i,j}a_{ij,\ell}^2$.
A straightforward calculation, in view of \eqref{varrho}, reveals that
\begin{align}
\| \nabla_{t,x}(\varrho^{\frac12})\|_{C^{0}_{t,x}} & \lesssim \ell^{-\frac{3}{2}} \|\mathring{R}_{q}\|_{C^{0}_{[t-1,t+1],x}},\label{varrho_half_C1} \\
\| \nabla_{t,x}\varrho\|_{C^{0}_{t,x}} & \lesssim \ell^{-1} \|\mathring{R}_{q}\|_{C^{0}_{[t-1,t+1],x}}.\label{varrho_C1}
\end{align}
Using $|\frac{\mathring{R}_{\ell}}{\varrho}|\leq 1$, $\varrho\geq \ell$, \eqref{C^n_M} and \eqref{varrho_C1}, we get
\begin{align}
\left\|\Gamma_{\zeta}^{(j)}\left(\Id-\frac{c_{\ast} \mathring{R}_{\ell}}{\varrho}\right)\right\|_{C^1_{t,x}} & \lesssim  \ell^{-2} \|\mathring{R}_{q}\|_{C^{0}_{[t-1,t+1],x}}. \label{Gamma-estimate}
\end{align}
From \eqref{ak}, we have 
\begin{align} 
	\nabla_{t,x} a_{\zeta}(t,x) & =  c_{\ast}^{-\frac12} \nabla_{t,x}(\varrho^{\frac12})\cdot\eta_{j,k}(t) \cdot \Gamma_{\zeta}^{(j)}\left(\Id-\frac{c_{\ast} \mathring{R}_{\ell}}{\varrho}\right) + c_{\ast}^{-\frac12} \varrho^{\frac12}\cdot \partial_t (\eta_{j,k}(t)) \cdot \Gamma_{\zeta}^{(j)}\left(\Id-\frac{c_{\ast} \mathring{R}_{\ell}}{\varrho}\right)
	\nonumber\\ & \quad  + c_{\ast}^{-\frac12}  \varrho^{\frac12} \cdot\eta_{j,k}(t) \cdot \nabla_{t,x} \left[\Gamma_{\zeta}^{(j)}\left(\Id-\frac{c_{\ast} \mathring{R}_{\ell}}{\varrho}\right)\right].
\end{align}
therefore, from \eqref{varrho_half_C1}, \eqref{varrho_0},   and \eqref{Gamma-estimate}, we have
\begin{align}
	\|a_{\zeta}\|_{C_{t,x}^{1}} & \lesssim \| \nabla_{t,x}(\varrho^{\frac12})\|_{C_{t,x}^{0}} +  m_q^{-1}\|\varrho^{\frac12}\|_{C_{t,x}^{0}}  +  \| \varrho^{\frac12} \|_{C_{t,x}^{0}} \left\|\Gamma_{\zeta}^{(j)}\left(\Id-\frac{c_{\ast} \mathring{R}_{\ell}}{\varrho}\right)\right\|_{C_{t,x}^{1}}
	\nonumber\\ & \lesssim \ell^{-\frac32} \|\mathring{R}_{q}\|_{C^{0}_{[t-1,t+1],x}} +  m_q^{-1}(\ell^{\frac12} + \|\mathring{R}_{q}\|^{\frac12}_{C^{0}_{[t-1,t+1],x}})  + (\ell^{\frac12} + \|\mathring{R}_{q}\|^{\frac12}_{C^{0}_{[t-1,t+1],x}} )  \ell^{-2} \|\mathring{R}_{q}\|_{C^{0}_{[t-1,t+1],x}}
	%\nonumber\\ & \lesssim \ell^{-1} \|\mathring{R}_{q}\|_{C^{0}_{[t-1,t+1],x}} +  \ell^{\frac12} + \|\mathring{R}_{q}\|^{\frac12}_{C^{0}_{[t-1,t+1],x}}  +   \ell^{-\frac32} \|\mathring{R}_{q}\|_{C^{0}_{[t-1,t+1],x}} + \ell^{-2} \|\mathring{R}_{q}\|^{\frac32}_{C^{0}_{[t-1,t+1],x}}
	\nonumber\\ & \lesssim  m_q^{-1} (\ell^{\frac12} + \|\mathring{R}_{q}\|^{\frac12}_{C^{0}_{[t-1,t+1],x}})  +   (\ell^{\frac12} + \|\mathring{R}_{q}\|^{\frac12}_{C^{0}_{[t-1,t+1],x}} )  \ell^{-2} \|\mathring{R}_{q}\|_{C^{0}_{[t-1,t+1],x}}
	\nonumber\\ &  \lesssim   (\ell^{\frac12} + \|\mathring{R}_{q}\|^{\frac12}_{C^{0}_{[t-1,t+1],x}} ) (m_q^{-1} +  \ell^{-2} \|\mathring{R}_{q}\|_{C^{0}_{[t-1,t+1],x}})
    \nonumber\\ &  
    %\lesssim   (\ell^{\frac12} + \|\mathring{R}_{q}\|^{\frac12}_{C^{0}_{[t-1,t+1],x}} )  \lambda_q^{\frac{16}{5}+\frac23}
    \lesssim   (\delta_{q+1}^{\frac12} + \|\mathring{R}_{q}\|^{\frac12}_{C^{0}_{[t-1,t+1],x}} )  (m_q^{-1} + \lambda_q^{\frac{58}{15}}) \lesssim   m_q^{-1} (\delta_{q+1}^{\frac12} + \|\mathring{R}_{q}\|^{\frac12}_{C^{0}_{[t-1,t+1],x}} )  ,
\end{align}
where we have used  \eqref{ell} and \eqref{R_q_C0} in the final inequality.

\vskip 2mm
\noindent
\textbf{Step 3.} Here we give the proof of \eqref{ak_2} and \eqref{ak_N}. Let $\Psi_1(y)=\sqrt{\ell^2+y^2}$. We have that $|D^m\Psi_1(y)|\lesssim \ell^{-m+1}$, for $m\geq 1$. Using Lemma \ref{Diff_Comp}, we obtain for any $N\geq1$
	\begin{align}\label{varrho1}
		 \|\varrho\|_{C_{t,x}^{N}}
		 &  \lesssim   \left\|\sqrt{\ell^2+ |\mathring{R}_{\ell}|^2}\right\|_{C^0_{t,x}}  + \|D\Psi_1\|_{C^{0}}\|\mathring{R}_{\ell}\|_{C^N_{t,x}} +  \|D\Psi_1\|_{C^{N-1}_{t,x}}\|\mathring{R}_{\ell}\|^{N}_{C^{1}_{t,x}} 
		\nonumber\\ &   \lesssim  \ell+\|\mathring{R}_{q}\|_{C^0_{[t-1,t+1],x}}  + \ell^{-N}\|\mathring{R}_{q}\|_{C^0_{[t-1,t+1],x}}  +  \ell^{-2N+1} \|\mathring{R}_{q}\|^{N}_{C^{0}_{[t-1,t+1],x}}
        \nonumber\\ &   \lesssim  \lambda_q^{-\frac85} + \lambda_q^{\frac23}  + \lambda_q^{\frac85 N}\lambda_q^{\frac23}  +  \lambda_q^{\frac{8}{5}(2N-1)} \lambda_q^{\frac23 N}
           \lesssim     \lambda_q^{\frac{58}{15} N -\frac85},
	\end{align}
    and for $N=0$
\begin{align}\label{varrho1-0}
		 \|\varrho\|_{C_{t,x}^{0}}
		    \lesssim  \ell+\|\mathring{R}_{q}\|_{C^0_{[t-1,t+1],x}} \lesssim \lambda_q^{\frac{2}{3}}.
	\end{align}
	 Let $\Psi_2(y)=y^{\frac12}$. We have $|D^{m}\Psi_2(y)|\lesssim |y|^{\frac12-m}$, for $m=1,\ldots,N$, and use Lemma \ref{Diff_Comp}, \eqref{varrho_0}, \eqref{varrho1} and $\varrho\geq \ell $ to derive for $m\geq1$ %(relation between $\ell$, $\delta_{q+1}$ and $|\mathring{R}_{\ell}|$).
	\begin{align}\label{varrho_m}
		\|\varrho^{\frac12}\|_{C^m_{t,x}}  
		& \lesssim \|\varrho^{\frac12}\|_{C^0_{t,x}}+ \ell^{-\frac12}\|\varrho\|_{C^m_{t,x}}+ \ell^{\frac12-m} \|\varrho\|^m_{C^1_{t,x}}
		\nonumber \\ & \lesssim\ell^{\frac12} +  \|\mathring{R}_{q}\|_{C^0_{[t-1,t+1],x}}^{\frac12} 
		+  \lambda_q^{\frac{4}{5}} \lambda_q^{\frac{58}{15}m-\frac{8}{5}}  +  \lambda_q^{\frac{8}{5}m-\frac{4}{5}}  \lambda_q^{\frac{58}{15}m-\frac{8}{5}m} 
          \lesssim   \lambda_q^{\frac{58}{15}m-\frac{4}{5}}.
	\end{align} 
%Therefore, for $m\geq0$, we obtain
%	\begin{align}\label{varrho_m}
%		\|\varrho^{\frac12}\|_{C^m_{t,x}}  \lesssim      \ell^{\frac12}+  \|\mathring{R}_{\ell}\|_{C^0_{[t-1,t+1],x}}^{\frac12} 
%		+ \ell^{-\frac12-m}\|\mathring{R}_{q}\|_{C^0_{[t-1,t+1],x}}  +  \ell^{-2m+\frac12} \|\mathring{R}_{q}\|^{m}_{C^{0}_{[t-1,t+1],x}}.
%	\end{align}  
Let $\Psi_3(y)=\frac{1}{y}$, for $y\geq \ell$. We have $|D^m\Psi_3(y)|\lesssim \ell^{-(m+1)}$, for $m\geq0$. We estimate
	\begin{align}\label{5.6}
		\left\|\frac{1}{\varrho}\right\|_{C^0_{t,x}}\lesssim \ell^{-1} = \lambda_q^{\frac{8}{5}},
	\end{align} 
	and for $m\geq1$, using Lemma \ref{Diff_Comp} and \eqref{varrho1}, we have
	\begin{align}\label{5.7}
		\left\|\frac{1}{\varrho}\right\|_{C^{m}_{t,x}} 
		 & \lesssim \left\|\frac{1}{\varrho}\right\|_{C^0_{t,x}} + \ell^{-2}\|\varrho\|_{C^m_{t,x}} + \ell^{-(m+1)}\|\varrho\|^{m}_{C^1_{t,x}}
		\nonumber\\ & \lesssim \lambda_q^{\frac85} +  \lambda_q^{\frac{16}{5}}  \lambda_q^{\frac{58}{15}m-\frac{8}{5}} + \lambda_q^{\frac85(m+1)}  \lambda_q^{\frac{58}{15}m-\frac{8}{5}m} \lesssim \lambda_q^{\frac{58}{15}m + \frac{8}{5}}.
	\end{align} 
	For $m\geq1$, using chain rule, \eqref{R_q_C0} and \eqref{5.6}-\eqref{5.7}, we have
	\begin{align}\label{5.8}
		\left\|\frac{\mathring{R}_\ell}{\varrho}\right\|_{C^{m}_{t,x}} 
		& \lesssim    \sum_{k=0}^{m} \|\mathring{R}_{\ell}\|_{C^{k}_{t,x}}\left\|\frac{1}{\varrho}\right\|_{C^{m-k}_{t,x}}  
	%	\nonumber\\  &
    \lesssim    \sum_{k=0}^{m} \ell^{-k} \|\mathring{R}_{q}\|_{C^0_{[t-1,t+1],x}} \lambda_q^{\frac{58}{15}(m-k) + \frac{8}{5}}
        \nonumber\\  &
        \lesssim    \sum_{k=0}^{m} 
        \lambda_q^{\frac85 k} \lambda_q^{\frac23} \lambda_q^{\frac{58}{15}(m-k) + \frac{8}{5}}
       % \nonumber\\  & 
       \lesssim     \lambda_q^{\frac{58}{15}m + \frac{34}{15}}.
      %  \\ &  \lesssim   \sum_{k=0}^{m}  (\ell^{-k-1}\|\mathring{R}_{q}\|_{C^0_{[t-1,t+1],x}}   + \ell^{-m-2}\|\mathring{R}_{q}\|^2_{C^0_{[t-1,t+1],x}}  +  \ell^{-2m+k-1} \|\mathring{R}_{q}\|^{m-k+1}_{C^{0}_{[t-1,t+1],x}})
     %   \nonumber\\  &  \lesssim \ell^{-2m-1}\|\mathring{R}_{q}\|_{C^0_{[t-1,t+1],x}}(1+\|\mathring{R}_{q}\|_{C^0_{[t-1,t+1],x}}^{m}).
	\end{align}
Using $|\frac{\mathring{R}_l}{\varrho}|\leq 1 $, $\varrho \geq \ell$ and \eqref{R_q_C0}, we have 
\begin{align}
    \left\|\frac{\nabla_{t,x}\mathring{R}_{\ell}}{\varrho}\right\|^{m}_{C^0_{t,x}} & \lesssim \ell^{-2m}\|\mathring{R}_{q}\|_{C^0_{[t-1,t+1],x}}^m \lesssim \lambda_q^{\frac{16}{5}m} \lambda_q^{\frac{2}{3}m} =\lambda_q^{\frac{58}{15}m} , \label{5.9}\\ 
    \left\|\frac{\mathring{R}_{\ell}}{\varrho^2}\right\|^{m}_{C^0_{t,x}} & \lesssim \ell^{-m} = \lambda_q^{\frac{8}{5}m}.\label{5.10}
\end{align}
	
    For $m\geq1$, using Lemma \ref{Diff_Comp}, \eqref{C^n_M}, \eqref{5.8}, \eqref{5.9}, \eqref{5.10} and \eqref{varrho1}, we find
	\begin{align}\label{Gamma_m}
		 	\left\|\Gamma_{\zeta}^{(j)}\left(\Id-\frac{\mathring{R}_{\ell}}{\varrho}\right)\right\|_{C^{m}_{t,x}} 
		 &   \lesssim \left\|\frac{\mathring{R}_\ell}{\varrho}\right\|_{C^{m}_{t,x}} +\left\|\frac{\nabla_{t,x}\mathring{R}_{\ell}}{\varrho}\right\|^{m}_{C^0_{t,x}} +\left\|\frac{\mathring{R}_{\ell}}{\varrho^2}\right\|^{m}_{C^0_{t,x}}\|\varrho\|^{m}_{C^1_{t,x}}
        \nonumber\\ &   \lesssim \lambda_q^{\frac{58}{15}m + \frac{34}{15}} + \lambda_q^{\frac{58}{15}m } +  \lambda_q^{\frac{8}{5}m } \lambda_q^{\frac{58}{15} m -\frac85 m} \lesssim \lambda_q^{\frac{58}{15}m + \frac{34}{15}},
	%	\nonumber\\& \lesssim  \ell^{-2m-1}\|\mathring{R}_{q}\|_{C^0_{[t-1,t+1],x}}(1+\|\mathring{R}_{q}\|_{C^0_{[t-1,t+1],x}}^{m})  +   \ell^{-2m} \|\mathring{R}_{q}\|_{C^0_{[t-1,t+1],x}}^{m}  
     %    +  \ell^{-m} (\ell    + \ell^{-1}\|\mathring{R}_{q}\|_{C^0_{[t-1,t+1],x}})^m
     %   \nonumber\\ &  \lesssim   \ell^{-2m-1} (1+\|\mathring{R}_{q}\|_{C^0_{[t-1,t+1],x}}^{m+1}),
	\end{align}
	and for $m=0$, using \eqref{C^n_M}, we find
	\begin{align}\label{Gamma_0}
		\left\|\Gamma_{\zeta}^{(j)}\left(\Id-\frac{\mathring{R}_{\ell}}{\varrho}\right)\right\|_{C^{0}_{t,x}} \lesssim 1.
	\end{align}
    By chain rule, and using \eqref{varrho_0}, \eqref{varrho_m} and \eqref{Gamma_m}-\eqref{Gamma_0}, we have
    \begin{align}
        \|a_{\zeta}\|_{C^{2}_{t,x}}   & \lesssim  \|\varrho^{\frac12}\|_{C^0_{t,x}}\|\eta_{j,k}\|_{C^0_{t,x}}\left\|\Gamma_{\zeta}^{(j)}\left(\Id-\frac{\mathring{R}_{\ell}}{\varrho}\right)\right\|_{C^{2}_{t,x}} + \|\varrho^{\frac12}\|_{C^0_{t,x}}\|\eta_{j,k}\|_{C^1_{t,x}}\left\|\Gamma_{\zeta}^{(j)}\left(\Id-\frac{\mathring{R}_{\ell}}{\varrho}\right)\right\|_{C^{1}_{t,x}} 
        \nonumber\\ & \quad + \|\varrho^{\frac12}\|_{C^0_{t,x}}\|\eta_{j,k}\|_{C^2_{t,x}}\left\|\Gamma_{\zeta}^{(j)}\left(\Id-\frac{\mathring{R}_{\ell}}{\varrho}\right)\right\|_{C^{0}_{t,x}} + 
\|\varrho^{\frac12}\|_{C^1_{t,x}}\|\eta_{j,k}\|_{C^0_{t,x}}\left\|\Gamma_{\zeta}^{(j)}\left(\Id-\frac{\mathring{R}_{\ell}}{\varrho}\right)\right\|_{C^{1}_{t,x}}
\nonumber\\ & \quad + \|\varrho^{\frac12}\|_{C^1_{t,x}}\|\eta_{j,k}\|_{C^1_{t,x}}\left\|\Gamma_{\zeta}^{(j)}\left(\Id-\frac{\mathring{R}_{\ell}}{\varrho}\right)\right\|_{C^{0}_{t,x}} + \|\varrho^{\frac12}\|_{C^2_{t,x}}\|\eta_{j,k}\|_{C^0_{t,x}}\left\|\Gamma_{\zeta}^{(j)}\left(\Id-\frac{\mathring{R}_{\ell}}{\varrho}\right)\right\|_{C^{0}_{t,x}}
\nonumber\\  & \lesssim  (\delta_{q+1}^{\frac12}+ \|\mathring{R}_{q}\|_{C^0_{[t-1,t+1],x}}^{\frac12})  \lambda_q^{\frac{58}{15}\times 2 + \frac{34}{15}} + (\delta_{q+1}^{\frac12}+ \|\mathring{R}_{q}\|_{C^0_{[t-1,t+1],x}}^{\frac12}) m_q^{-1} \lambda_q^{\frac{58}{15}+\frac{34}{15}} 
        \nonumber\\ & \quad + (\delta_{q+1}^{\frac12}+ \|\mathring{R}_{q}\|_{C^0_{[t-1,t+1],x}}^{\frac12}) m_q^{-2}  + 
\ell^{-\frac{3}{2}} \|\mathring{R}_{q}\|_{C^0_{[t-1,t+1],x}}   \lambda_q^{\frac{58}{15} + \frac{34}{15}}
  + \ell^{-\frac{3}{2}} \|\mathring{R}_{q}\|_{C^0_{[t-1,t+1],x}} 
m_q^{-1}  + \ell^{\frac{1}{2}} \lambda_q^{\frac{58}{15}\times 2} 
    \nonumber\\  & \lesssim  (\delta_{q+1}^{\frac12}+ \|\mathring{R}_{q}\|_{C^0_{[t-1,t+1],x}}^{\frac12})(\lambda_q^{10} + m_q^{-1}\lambda_q^{\frac{92}{15}} + m_q^{-2} ) 
   \nonumber\\ &  \lesssim  m_q^{-1}\lambda_q^{\frac{92}{15}} (\delta_{q+1}^{\frac12}+ \|\mathring{R}_{q}\|_{C^0_{[t-1,t+1],x}}^{\frac12}).
    \end{align}
Again by chain rule, and using \eqref{varrho_0}, \eqref{varrho_m} and \eqref{Gamma_m}-\eqref{Gamma_0}, we have for $N\geq0$ 
	\begin{align*}
		\|a_{\zeta}\|_{C^{N}_{t,x}}   & \lesssim \sum_{m=0}^{N}\sum_{n=0}^{N-m}  \|\varrho^{\frac12}\|_{C^m_{t,x}}\|\eta_{j,k}\|_{C^n_{t,x}}\left\|\Gamma_{\zeta}^{(j)}\left(\Id-\frac{\mathring{R}_{\ell}}{\varrho}\right)\right\|_{C^{N-m-n}_{t,x}}
        \nonumber\\ & = \|\varrho^{\frac12}\|_{C^0_{t,x}} \sum_{n=0}^{N}  \|\eta_{j,k}\|_{C^n_{t,x}}\left\|\Gamma_{\zeta}^{(j)}\left(\Id-\frac{\mathring{R}_{\ell}}{\varrho}\right)\right\|_{C^{N-n}_{t,x}} 
        + \sum_{m=1}^{N}\sum_{n=0}^{N-m}  \|\varrho^{\frac12}\|_{C^m_{t,x}}\|\eta_{j,k}\|_{C^n_{t,x}}\left\|\Gamma_{\zeta}^{(j)}\left(\Id-\frac{\mathring{R}_{\ell}}{\varrho}\right)\right\|_{C^{N-m-n}_{t,x}}
        \nonumber\\ & \lesssim  \lambda_q^{\frac{1}{3}} \sum_{n=0}^{N}  m_q^{-n} \lambda_q^{\frac{58}{15} (N-n) + \frac{34}{15}} 
        + \sum_{m=1}^{N}\sum_{n=0}^{N-m}  \lambda_q^{\frac{58}{15}m-\frac{4}{5}} m_q^{-n} \lambda_q^{\frac{58}{15} (N-m-n) + \frac{34}{15}} 
        \nonumber\\ & \lesssim m_q^{-N} \lambda_q^{\frac{13}{5}} 
        + \sum_{m=1}^{N}\sum_{n=0}^{N-m}  \lambda_q^{\frac{58}{15}m-\frac{4}{5}} m_q^{-n} \lambda_q^{\frac{58}{15} (N-m-n) + \frac{34}{15}} 
        \nonumber\\ & \lesssim m_q^{-N} \lambda_q^{3} 
        + m_q^{-(N-1)} \lambda_q^{\frac{16}{3}}  \lesssim m_q^{-N} \lambda_q^{3}.
	\end{align*} 
This completes the proof.
\end{proof}

\subsection{Inductive estimates for $\v_{q+1}$}\label{v_q+1}
In this subsection, we will verify the inductive estimates \eqref{v_q_C0}-\eqref{v_q_EN} at the level $q+1$ as well as the inequality \eqref{v_diff_EN}. 
Let us recall the definition of $\omega_{q+1}^{(p)}$ as follows:
\begin{align*}
	\omega_{q+1}^{(p)}(t,x)&:=\sum_{j}\sum_{\zeta\in\Lambda_j} a_{q+1,j,\zeta}(t,x)B_{\zeta}e^{i\lambda_{q+1}\zeta\cdot\Phi_{k,j}(t,x)}.
\end{align*}
By the definition of $\omega_{q+1}^{(p)}$, we find (using \eqref{varrho_0} and \eqref{R_q_C0})
 \begin{align}\label{w_p_C0}
	\|\omega_{q+1}^{(p)}\|_{C^0_{t,x}}  & \lesssim 2|\Lambda_{j}|c_{\ast}^{-\frac{1}{2}}M \|\varrho^{\frac12}\|_{C^0_{t,x}}
	\nonumber\\ &  \lesssim 2|\Lambda_{j}|c_{\ast}^{-\frac{1}{2}}M \left[\delta_{q+1}^{\frac12} + \|\mathring{R}_{q}\|_{C^0_{[t-1,t+1],x}}^{\frac12} \right]
	 \lesssim 2|\Lambda_{j}|c_{\ast}^{-\frac{1}{2}}M \left[ \delta_{q+1}^{\frac12} + \lambda_{q}^{\frac13} \right] \lesssim \lambda_{q}^{\frac13} \leq \frac14\lambda_{q+1}^{\frac13},
\end{align}
where we select $a$ that is big enough to absorb the constant. Moreover, we also have (using \eqref{varrho_0} and  \eqref{R_q_EN}) 
\begin{align}\label{w_p_EN}
	\EN\omega_{q+1}^{(p)}\EN_{C^0,2r}  & \lesssim 2|\Lambda_{j}|c_{\ast}^{-\frac{1}{2}}M \EN\varrho^{\frac12}\EN_{C^0,2r}
	\nonumber\\ &  \lesssim 2|\Lambda_{j}|c_{\ast}^{-\frac{1}{2}}M \left[ \delta_{q+1}^{\frac12} + \EN\mathring{R}_{q}\EN_{C^0,r}^{\frac12}  \right]
     \lesssim 4 |\Lambda_{j}|c_{\ast}^{-\frac{1}{2}}M  \delta_{q+1}^{\frac12}  \leq \frac{1}{4}\bar{M}\delta_{q+1}^{\frac{1}{2}},
\end{align}
where $\bar{M}$ is a universal constant satisfying $17|\Lambda_{j}|c_{\ast}^{-\frac{1}{2}}M<\bar{M}$ and we select $a$ that is big enough to absorb the constant.  Using \eqref{R_q_C0}, \eqref{ak_0}, \eqref{ak_1}, \eqref{B.9}, \eqref{B.11}, and   $0<\beta<  \frac{1}{200} $, we estimate

\begin{align}\label{w_p_C1}
	& \|\omega_{q+1}^{(p)}\|_{C^1_{t,x}} 
      \lesssim \sup_{j}\sum_{\zeta\in\Lambda_j}\bigg[\|a_{\zeta}\|_{C^1_{t,x}} + \|a_{\zeta}\|_{C^0_{t,x}}\cdot\lambda_{q+1} \left(\|\partial_t\Phi_{k,j}\|_{C^{0}_{t,x}} + \|\nabla\Phi_{k,j}\|_{C^{0}_{t,x}}\right)\bigg]
	\nonumber\\ & \lesssim m_q^{-1}  (\delta_{q+1}^{\frac12} + \|\mathring{R}_{q}\|^{\frac12}_{C^{0}_{[t-1,t+1],x}} ) +  (\delta_{q+1}^{\frac12} +  \|\mathring{R}_{q}\|_{C^{0}_{[t-1,t+1],x}}^{\frac12}) \cdot\lambda_{q}^{\frac{23}{4}} \left(\lambda_{q}^{\frac13} + 1\right)
      \lesssim  \lambda_{q}^{\frac{77}{12}} \leq  \frac14 \lambda_{q+1}^{\frac75}\delta_{q+1}^{\frac12},
\end{align}
where we select $a$ that is big enough to absorb the constant. Let us recall the definition of  $\omega_{q+1}^{(c)}$ as follows:
\begin{align*}
	\omega_{q+1}^{(c)}(t,x)&:= \sum_{j}\sum_{\zeta\in\Lambda_j} \left(\frac{\nabla a_\zeta}{\lambda_{q+1}}+i a_\zeta (\nabla\Phi_{k,j}(t,x)-\Id)\zeta \right)B_{\zeta}e^{i\lambda_{q+1}\zeta\cdot \Phi_{k,j}(t,x)}.
\end{align*}
Using \eqref{ak_0}-\eqref{ak_1}, \eqref{R_q_C0} and \eqref{B.8}, we obtain   
 \begin{align}
	\|\omega_{q+1}^{(c)}\|_{C^0_{t,x}}& \lesssim \sup_{j}\sum_{\zeta\in\Lambda_j} \left(\frac{\|\nabla a_{\zeta}\|_{C^0_{t,x}}}{\lambda_{q+1}}+\| a_{\zeta}\|_{C^0_{t,x}}\|\nabla\Phi_{k,j}(t,x)-\Id\|_{C^0_{t,x}}\right)
	\nonumber\\& \lesssim  m_q^{-1} \lambda_q^{-\frac{23}{4}} (\delta_{q+1}^{\frac12} + \|\mathring{R}_{q}\|^{\frac12}_{C^{0}_{[t-1,t+1],x}} )  + (\delta_{q+1}^{\frac12} + \|\mathring{R}_{q}\|^{\frac12}_{C^0_{[t-1,t+1],x}}) \lambda_{q}^{-\frac{8}{5}}
    \nonumber\\& \lesssim  \lambda_{q}^{-\frac{11}{16}-\frac{27}{8}\beta}  (\delta_{q+1}^{\frac12} + \|\mathring{R}_{q}\|^{\frac12}_{C^{0}_{[t-1,t+1],x}} ) \label{w_c_C0_0}
    \\ & \lesssim  \lambda_{q}^{-\frac{17}{48} - \frac{27}{8}\beta}  \leq \frac14 \lambda_{q+1}^{\frac13}.  \label{w_c_C0}
\end{align}
 Taking expectation of \eqref{w_c_C0_0} and using \eqref{R_q_EN}, we obtain    
\begin{align}
	\EN\omega_{q+1}^{(c)}\EN_{C^0,2r}  & \lesssim \lambda_{q}^{-\frac{11}{16}-\frac{27}{8}\beta} ( \delta_{q+1}^{\frac12} + \EN\mathring{R}_{q}\EN_{C^0,r}^{\frac{1}{2}}) \lesssim \lambda_{q}^{-\frac{11}{16}-\frac{27}{8}\beta} \delta_{q+1}^{\frac12} 
	     % \label{w_c_EN_D3}
	   \leq \frac{1}{4}\bar{M}\delta_{q+1}^{\frac12},\label{w_c_EN}
\end{align}
where we select $a$ that is big enough to absorb the constant in \eqref{w_c_EN}. 

 Using \eqref{R_q_C0}, \eqref{w_c_C0}, \eqref{ak_N},  \eqref{B.8}-\eqref{B.12}, and   $0 < \beta < \frac{1}{200} $, we obtain
\begin{align}\label{w_c_C1}
		\|\omega_{q+1}^{(c)}\|_{C^1_{t,x}} 
        & \lesssim  \lambda_{q+1} \|\omega_{q+1}^{(c)}\|_{C^0_{t,x}} \sup_{j} \left(\|\partial_t\Phi_{k,j}\|_{C^{0}_{t,x}} + \|\nabla\Phi_{k,j}\|_{C^{0}_{t,x}}\right)
	\nonumber\\& \quad + \sup_{j} \sum_{\zeta\in\Lambda_j}\bigg[\frac{\|\nabla a_{\zeta}\|_{C^1_{t,x}}}{\lambda_{q+1}} + \|a_{\zeta}\|_{C^1_{t,x}}\|\nabla\Phi_{k,j}-\Id\|_{C^0_{t,x}}
	+ \|a_{\zeta}\|_{C^0_{t,x}}(\|\nabla^2\Phi_{k,j}\|_{C^0_{t,x}}+\|\partial_t\nabla\Phi_{k,j}\|_{C^0_{t,x}}) \bigg]
   \nonumber\\ & \lesssim  \lambda_{q}^{\frac{23}{4}} \lambda_{q}^{-\frac{17}{48} - \frac{27}{8}\beta} \left( \lambda_q^{\frac13} + 1\right)
 + \bigg[\lambda_q^{-\frac{23}{4}}   m_q^{-1} \lambda_q^{\frac{92}{15}}   \lambda_q^{\frac{1}{3}}   + m_q^{-1} \lambda_q^{\frac{1}{3}-\frac{8}{5}}
	+ \lambda_q^{\frac13} (1 + \lambda_q^{\frac75}) \bigg]
    \nonumber\\ & \lesssim   \lambda_{q}^{\frac{275}{48} - \frac{27}{8}\beta}   + \lambda_{q}^{ \frac{1387}{240} - \frac{27}{8}\beta } + \lambda_{q}^{ \frac{911}{240} - \frac{27}{8}\beta }
	+ \lambda_q^{\frac{26}{15}} 
      \lesssim  \lambda_{q}^{ \frac{1387}{240} - \frac{27}{8}\beta }   \leq \frac{1}{4} \lambda_{q+1}^{\frac75}\delta_{q+1}^{\frac12},
\end{align}
where we select $a$ that is big enough to absorb the constant. Now, in view of mollification estimates and \eqref{v_q_C1}, we find for any $t\in\R$ and $0<\beta< \frac{1}{200}$
\begin{align}\label{5.16}
	\|(\v_{q}-\v_{\ell})(t)\|_{\L^{\infty}} \lesssim \ell \|\v_{q}\|_{C^1_{[t-1,t],x}}\leq  \lambda_{q}^{-\frac85} \lambda_{q}^{\frac{7}{5}}\delta_{q}^{\frac12} \leq \frac{1}{4} \bar{M}\delta_{q+1}^{\frac12},
\end{align}
where we select $a$ that is big enough to absorb the constant. Taking expectation of \eqref{5.16}, we obtain
\begin{align}\label{vq-vl_EN}
	\EN\v_{\ell}-\v_{q}\EN_{C^0,2r} \leq \frac{1}{4} \bar{M}\delta_{q+1}^{\frac12}.
\end{align}
Making use of \eqref{w_p_EN}, \eqref{w_c_EN} and \eqref{vq-vl_EN}, we reach at
\begin{align*}
	\EN\v_{q+1}-\v_{q}\EN_{C^0,2r}\leq \EN \v_{\ell}-\v_{q}\EN_{C^0,2r} + \EN\omega_{q+1}\EN_{C^0,2r}\leq  \bar{M}\delta_{q+1}^{\frac12},
\end{align*}
which completes the proof of \eqref{v_diff_EN}. Combining \eqref{v_q_C0}, \eqref{w_p_C0} and \eqref{w_c_C0}, we arrive at
\begin{align}\label{v_q+1_C0}
	\|\v_{q+1}\|_{C^0_{t,x}} & \leq \|\v_{\ell}\|_{C^0_{t,x}} + \|\omega_{q+1}^{(p)}\|_{C^0_{t,x}}  + \|\omega_{q+1}^{(c)}\|_{C^0_{t,x}} 
	   \leq  \lambda_{q}^{\frac13}+ \frac12 \lambda_{q+1}^{\frac13}
	   \leq  \lambda_{q+1}^{\frac13},
\end{align}
which completes the proof of \eqref{v_q_C0}.  In view of \eqref{v_q_C1}, \eqref{w_p_C1} and \eqref{w_c_C1}, we obtain 
\begin{align}\label{v_q+1_C1}
		\|\v_{q+1}\|_{C^1_{t,x}} & \leq \|\v_{\ell}\|_{C^1_{t,x}} + \|\omega_{q+1}^{(p)}\|_{C^1_{t,x}}  + \|\omega_{q+1}^{(c)}\|_{C^1_{t,x}} 
	  \leq    \lambda_{q}^{\frac75}\delta_{q}^{\frac12}+\frac{1}{2} \lambda_{q+1}^{\frac75}\delta_{q+1}^{\frac12} \leq \lambda_{q+1}^{\frac75}\delta_{q+1}^{\frac12},
\end{align}
which completes the proof of \eqref{v_q_C1}. Combining \eqref{v_q_EN}, \eqref{w_p_C0} and \eqref{w_c_C0}, we find
 \begin{align}\label{v_q+1_EN}
	\EN \v_{q+1}\EN_{C^0,2r} \leq \EN \v_{q}\EN_{C^0,2r} + \EN \v_{q+1} - \v_{q}\EN_{C^0,2r}\leq  6rL^2 - \delta_{q}^{\frac{1}{2}} + \bar{M}\delta_{q+1}^{\frac{1}{2}} \leq 6rL^2 -\delta_{q+1}^{\frac{1}{2}},
\end{align}
  for big enough $a$ and $L\geq \frac{1+\bar{M}}{\sqrt{6r}}$, which completes the proof of \eqref{v_q_EN}.

\subsection{Inductive estimates for $\mathring{R}_{q+1}$}\label{R_q+1}
 In this subsection, we will verify the inductive estimates \eqref{R_q_C0} and \eqref{R_q_EN} at the level $q+1$. For this subsection, we choose two parameters $\varpi>0$ and $m\in\N$ such that  $0<\varpi<\min\left\{1-2\alpha-\frac{21}{2}\beta, \frac{73}{1380} - \frac{228}{23}\beta  \right\} $ and $ m > 33 $. Next, we will estimate each term on the right hand side of \eqref{R_q+1-split} separately.
\subsubsection{Estimate on $R_{lin}$.}
Let us recall the definition of $R_{lin}$ as follows:
\begin{align*}
	R_{lin}&:=  \mathcal{R}\bigg[\nu(-\Delta)^{\alpha} \omega_{q+1}\bigg].
\end{align*}
We know by Lemma \ref{Lem_C.2} that %for $\varpi\in(0,1-2\alpha]$ (where $\alpha\in(0,\frac12)$)
\begin{align}\label{R_lin}
	\left\|\mathcal{R}\bigg[\nu(-\Delta)^{\alpha} \omega_{q+1}\bigg]\right\|_{C^0_{t}C^0_{x}} \lesssim  \left\|\mathcal{R}\omega_{q+1}^{(p)}\right\|_{C^0_{t}C^{2\alpha+\varpi}_{x}} + \left\|\mathcal{R}\omega_{q+1}^{(c)}\right\|_{C^0_{t}C^{2\alpha+\varpi}_{x}}.
\end{align}
\vskip 2mm
\noindent
We have by Lemma \ref{SPL} that %for $m>\frac{b+\frac{97}{15}+4\beta b^2}{b-\frac{58}{15}}$
\begin{align}
\left\|\mathcal{R}\omega_{q+1}^{(p)}\right\|_{C^0_{t}C^{2\alpha+\varpi}_{x}} 
	 & \lesssim \sup_{j} \sum_{\zeta\in\Lambda_j}\left[	\frac{\|a_{\zeta}\|_{C^0_{t}C^0_{x}}}{\lambda_{q+1}^{1-2\alpha-\varpi}} + \frac{\|a_{\zeta}\|_{C^0_{t}C^{m,2\alpha+\varpi}_{x}}+\|a_{\zeta}\|_{C^0_{t}C^0_{x}}\|\nabla\Phi\|_{C^0_{t}C^{m,2\alpha+\varpi}_{x}}}{\lambda_{q+1}^{m-2\alpha-\varpi}}\right]
	\nonumber\\ & \lesssim \sup_{j} \sum_{\zeta\in\Lambda_j}\left[	\frac{\|a_{\zeta}\|_{C^0_{t}C^0_{x}}}{\lambda_{q+1}^{1-2\alpha-\varpi}} + \frac{\|a_{\zeta}\|_{C^0_{t}C^{m+1}_{x}}+\|a_{\zeta}\|_{C^0_{t}C^0_{x}}\|\nabla\Phi\|_{C^0_{t}C^{m+1}_{x}}}{\lambda_{q+1}^{m-2\alpha-\varpi}}\right]
    \nonumber\\ & \lesssim 	\frac{(\delta_{q+1}^{\frac12} + \|\mathring{R}_{q}\|^{\frac12}_{C^0_{[t-1,t+1],x}})}{\lambda_{q+1}^{1-2\alpha-\varpi}} + \frac{ m_q^{-(m+1)}\lambda_q^{3} + ( \delta_{q+1}^{\frac12} + \|\mathring{R}_{q}\|^{\frac12}_{C^0_{[t-1,t+1],x}} ) \lambda_q^{\frac{8}{5}m}}{\lambda_{q+1}^{m-2\alpha-\varpi}} 
    \nonumber\\ & \lesssim 	\frac{(\delta_{q+1}^{\frac12} + \|\mathring{R}_{q}\|^{\frac12}_{C^0_{[t-1,t+1],x}})}{\lambda_{q+1}^{1-2\alpha-\varpi}} + \frac{1}{7\cdot 4}\delta_{q+2},\label{Rw_p_lin0}
\end{align}
where we have also used \eqref{ak_N}, \eqref{B.10}, $m>33$ and $\beta<\frac{1}{200}$.

Now, we estimate (using \eqref{ell}, \eqref{ak_N}, \eqref{B.8} and \eqref{B.10})

\begin{align}
    \|a_{\zeta}(\nabla\Phi_{k,j}-\Id)\|_{{C^0_{t}C^0_{x}}} & \leq \|a_{\zeta}\|_{{C^0_{t}C^0_{x}}}\|\nabla\Phi_{k,j}-\Id\|_{{C^0_{t}C^0_{x}}}
     \lesssim (\delta_{q+1}^{\frac12} + \|\mathring{R}_{q}\|^{\frac12}_{C^0_{[t-1,t+1],x}}) \lambda_q^{-\frac85},\label{aPhi_k0}
\end{align}
and, for all $N\geq0$
\begin{align}
	&\|a_{\zeta}(\nabla\Phi_{k,j}-\Id)\|_{{C^0_{t}C^N_{x}}}  
	 \lesssim \sum_{k=0}^{N}\|a_{\zeta}\|_{C^k_{t,x}}\|\nabla\Phi_{k,j}-\Id\|_{C^0_{t}C^{N-k}_{x}}
       \lesssim \sum_{k=0}^{N} m_q^{-k} \lambda_q^{3} \lambda_q^{\frac{8}{5}(N-k-1)} \lesssim  m_q^{-N} \lambda_q^{ 2 }.
	\label{aPhi_kN}
\end{align}

Again, we have by Lemma \ref{SPL} and  \eqref{aPhi_kN} that   %for $b>\frac{58}{15}$ and $m>\frac{b+\frac{31}{3}+4\beta b^2}{b-\frac{58}{15}}$
\begin{align}
&\left\|\mathcal{R}\omega_{q+1}^{(c)}\right\|_{C^0_{t}C^{2\alpha+\varpi}_{x}} 
\nonumber\\ 	& \lesssim \sup_{j} \sum_{\zeta\in\Lambda_j}\frac{1}{\lambda_{q+1}}\left[	\frac{\|\nabla a_{\zeta}\|_{C^0_{t}C^0_{x}}}{\lambda_{q+1}^{1-2\alpha-\varpi}} + \frac{\| \nabla a_{\zeta}\|_{C^0_{t}C^{m,2\alpha+\varpi}_{x}}+\|\nabla a_{\zeta}\|_{C^0_{t}C^0_{x}}\|\nabla\Phi_{k,j}\|_{C^0_{t}C^{m,2\alpha+\varpi}_{x}}}{\lambda_{q+1}^{m-2\alpha-\varpi}}\right]
	\nonumber\\& \quad + \sup_{j} \sum_{\zeta\in\Lambda_j}\bigg[	\frac{\|a_{\zeta}(\nabla\Phi_{k,j}-\Id)\|_{C^0_{t}C^0_{x}}}{\lambda_{q+1}^{1-2\alpha-\varpi}}   + \frac{\|a_{\zeta}(\nabla\Phi_{k,j}-\Id)\|_{C^0_{t}C^{m,2\alpha+\varpi}_{x}}+\|a_{\zeta}(\nabla\Phi_{k,j}-\Id)\|_{C^0_{t}C^0_{x}}\|\nabla\Phi_{k,j}\|_{C^0_{t}C^{m,2\alpha+\varpi}_{x}}}{\lambda_{q+1}^{m-2\alpha-\varpi}}\bigg]
	\nonumber\\& \lesssim \sup_{j} \sum_{\zeta\in\Lambda_j}\frac{1}{\lambda_{q+1}}\left[	\frac{\|\nabla a_{\zeta}\|_{C^0_{t}C^0_{x}}}{\lambda_{q+1}^{1-2\alpha-\varpi}} + \frac{\| \nabla a_{\zeta}\|_{C^0_{t}C^{m+1}_{x}}+\|\nabla a_{\zeta}\|_{C^0_{t}C^0_{x}}\|\nabla\Phi_{k,j}\|_{C^0_{t}C^{m+1}_{x}}}{\lambda_{q+1}^{m-1}}\right]
	\nonumber\\& \quad + \sup_{j} \sum_{\zeta\in\Lambda_j}\bigg[	\frac{\|a_{\zeta}(\nabla\Phi_{k,j}-\Id)\|_{C^0_{t}C^0_{x}}}{\lambda_{q+1}^{1-2\alpha-\varpi}}   + \frac{\|a_{\zeta}(\nabla\Phi_{k,j}-\Id)\|_{C^0_{t}C^{m+1}_{x}}+\|a_{\zeta}(\nabla\Phi_{k,j}-\Id)\|_{C^0_{t}C^0_{x}}\|\nabla\Phi_{k,j}\|_{C^0_{t}C^{m+1}_{x}}}{\lambda_{q+1}^{m-1}}\bigg]
    \nonumber\\& \lesssim \lambda_{q}^{-b} \left[	\frac{m_q^{-1}(\delta_{q+1}^{\frac12} + \|\mathring{R}_{q}\|^{\frac12}_{C^0_{[t-1,t+1],x}})}{\lambda_{q+1}^{1-2\alpha-\varpi}} + \frac{ m_q^{-(m+2)}\lambda_q^{3} + m_q^{-1} \lambda_q^{\frac{8}{5}m }  }{\lambda_{q+1}^{m-1}}\right]
	\nonumber\\& \quad + 	\frac{(\delta_{q+1}^{\frac12} + \|\mathring{R}_{q}\|^{\frac12}_{C^0_{[t-1,t+1],x}}) \lambda_q^{-\frac85}}{\lambda_{q+1}^{1-2\alpha-\varpi}}   + \frac{m_q^{-(m+1)}\lambda_q + \lambda_q \cdot \lambda_q^{\frac85 m} }{\lambda_{q+1}^{m-1}}
    \nonumber\\& \lesssim  	\frac{(\delta_{q+1}^{\frac12} + \|\mathring{R}_{q}\|^{\frac12}_{C^0_{[t-1,t+1],x}})}{\lambda_{q+1}^{1-2\alpha-\varpi}}  + \frac{1}{7\cdot 4}\delta_{q+2},  \label{Rw_c_lin0}
\end{align}
where we have also used \eqref{ak_N}, \eqref{B.10}, $m>33$ and $\beta<\frac{1}{200}$. Combining \eqref{R_lin}, \eqref{Rw_p_lin0} and \eqref{Rw_c_lin0}, we get
\begin{align}
	\|R_{lin}\|_{C^0_{t,x}} & \lesssim  \frac{(\delta_{q+1}^{\frac12} + \|\mathring{R}_{q}\|^{\frac12}_{C^0_{[t-1,t+1],x}})}{\lambda_{q+1}^{1-2\alpha-\varpi}}  + \frac{1}{7\cdot 2}\delta_{q+2}   \label{R_lin1}
	\\ & \leq  \frac{1}{7}\lambda_{q+1}^{{\frac23}},\label{R_lin2}
\end{align}
where we have also used $0<\varpi< 1-2\alpha-\frac{21}{2} \beta$ and $m> 33$, and we select $a$ that is big enough to absorb the constant. Taking expectation of \eqref{R_lin1}, we obtain for $0 < \beta < \min\left\{ \frac{2(1-2\alpha)}{21}, \frac{1}{200}\right\}$
\begin{align}\label{R_lin_r}
	\EN R_{lin}\EN_{C^0,r}
	&  \lesssim 	\frac{\delta_{q+1}^{\frac12}}{\lambda_{q+1}^{1-2\alpha-\varpi}}  + \frac{1}{7\cdot 2}\delta_{q+2}
  \leq \frac{1}{7 } \delta_{q+2},
\end{align}
where we have used \eqref{R_q_EN}, $0<\varpi< 1-2\alpha-\frac{21}{2} \beta$ and $m> 33$, and we select $a$ that is big enough to absorb the constant. 

\subsubsection{Estimate on $R_{trans}$.}
Let us recall the definition of $R_{trans}$ as follows:
\begin{align*}
	R_{trans}&:= \mathcal{R}\bigg[(\partial_t+(\v_{\ell}+\z_{\ell})\cdot\nabla)\omega^{(p)}_{q+1}\bigg].
\end{align*}
We know by the definition of $\omega_{q+1}^{(p)}$ that for $t\in[k,k+1]$, $k\in\mathbb{Z}$
\begin{align}\label{3.29}
	(\partial_t+(\v_{\ell}+\z_{\ell})\cdot\nabla)\omega^{(p)}_{q+1} = \sum_{j}\sum_{\zeta\in\Lambda_j}(\partial_t+(\v_{\ell}+\z_{\ell})\cdot\nabla)a_{\zeta}B_{\zeta} e^{i\lambda_{q+1}\zeta\cdot\Phi_{k,j}}.
\end{align}
Again by chain rule, \eqref{ak_N}, \eqref{v_q_C0} and \eqref{z_q_C0}, we have 
\begin{align}\label{31244}
    \|(\partial_t+(\v_{\ell}+\z_{\ell})\cdot\nabla)a_{\zeta}\|_{C^0_tC^{0}_{x}} & \lesssim \|a_{\zeta}\|_{C^{1}_{t,x}} + \|\v_{\ell}+\z_{\ell}\|_{C^0_{t,x}} \|a_{\zeta}\|_{C^{1}_{t,x}}
    \nonumber\\ & \lesssim m_q^{-1} (\delta_{q+1}^{\frac12} + \|\mathring{R}_{q}\|^{\frac12}_{C^0_{[t-1,t+1],x}}) (1+\|\v_{\ell}\|_{C^0_{t,x}} + \|\z_{\ell}\|_{C^0_{t,x}}),
\end{align}
and, for $N\geq0$ 
\begin{align}\label{3124}
\|(\partial_t+(\v_{\ell}+\z_{\ell})\cdot\nabla)a_{\zeta}\|_{C^0_tC^{N}_{x}} 
& \lesssim \|a_{\zeta}\|_{C^{N+1}_{t,x}} + \sum_{k=0}^{N} \|\v_{\ell}+\z_{\ell}\|_{C^0_tC^{k}_{x}} \|\nabla a_{\zeta}\|_{C^{N-k}_{t,x}}  
\nonumber  \\
& \lesssim m_q^{-(N+1)} \lambda_q^{3}  + \sum_{k=0}^{N} \ell^{-k} \|\v_{q}+\z_{q}\|_{C^0_{[t-1,t+1],x}} m_q^{-(N-k+1)} \lambda_q^{3}  
\nonumber  \\
& \lesssim m_q^{-(N+1)} \lambda_q^{3}   (1+ \|\v_{q}\|_{C^{0}_{[t-1,t+1],x}}+\|\z_{q}\|_{C^{0}_{[t-1,t+1],x}})
\lesssim m_q^{-(N+1)} \lambda_q^{\frac{10}{3}} .
\end{align}
Making use of Lemma \ref{SPL}, \eqref{3.29}-\eqref{3124}, \eqref{v_q_C0}, \eqref{R_q_C0} and \eqref{z_q_C0}, we get  
 \begin{align}
 &\|R_{trans}\|_{C^0_{t,x}}  \lesssim 	\left\|R_{trans}\right\|_{C^0_tC^{\varpi}_x} 
\nonumber \\ & \lesssim \frac{\|(\partial_t+(\v_{\ell}+\z_{\ell})\cdot\nabla)a_{\zeta}\|_{C^0_{t,x}}}{\lambda_{q+1}^{1-\varpi}} 
  + \frac{\|(\partial_t+(\v_{\ell}+\z_{\ell})\cdot\nabla)a_{\zeta}\|_{C^0_tC^{m,\varpi}_{x}}+\|(\partial_t+(\v_{\ell}+\z_{\ell})\cdot\nabla)a_{\zeta}\|_{C^0_{t,x}}\|\nabla\Phi_{k,j}\|_{C^0_tC^{m,\varpi}_x}}{\lambda_{q+1}^{m-\varpi}}
\nonumber \\ & \lesssim \frac{\|(\partial_t+(\v_{\ell}+\z_{\ell})\cdot\nabla)a_{\zeta}\|_{C^0_{t,x}}}{\lambda_{q+1}^{1-\varpi}} 
 + \frac{\|(\partial_t+(\v_{\ell}+\z_{\ell})\cdot\nabla)a_{\zeta}\|_{C^0_tC^{m+1}_{x}}+\|(\partial_t+(\v_{\ell}+\z_{\ell})\cdot\nabla)a_{\zeta}\|_{C^0_{t,x}}\|\nabla\Phi_{k,j}\|_{C^0_tC^{m+1}_x}}{\lambda_{q+1}^{m-1}}
\nonumber \\ & \lesssim \frac{m_q^{-1} (\delta_{q+1}^{\frac12} + \|\mathring{R}_{q}\|^{\frac12}_{C^0_{[t-1,t+1],x}}) (1+\|\v_{q}\|_{C^0_{[t-1,t+1],x}} + \|\z_{q}\|_{C^0_{[t-1,t+1],x}})}{\lambda_{q+1}^{1-\varpi}} 
 + \frac{ m_q^{-(m+2)} \lambda_q^{\frac{10}{3}} }{\lambda_{q+1}^{m-1}}
\nonumber \\ & \lesssim \frac{m_q^{-1} (\delta_{q+1}^{\frac12} + \|\mathring{R}_{q}\|^{\frac12}_{C^0_{[t-1,t+1],x}}) (1+\|\v_{q}\|_{C^0_{[t-1,t+1],x}} + \|\z_{q}\|_{C^0_{[t-1,t+1],x}})}{\lambda_{q+1}^{1-\varpi}} 
 + \frac{1}{7\cdot 2} \delta_{q+2} \label{R_tra_C00}
  \\ & \lesssim \frac{m_q^{-1}\lambda_q^{\frac{2}{3}}}{\lambda_{q+1}^{1-\varpi}} 
 + \frac{1}{7\cdot 2} \delta_{q+2} 
 \leq \frac17 \lambda_{q+1}^{\frac23}, \label{R_tra_C0}
\end{align}
where we have also used $\varpi< \frac{73}{1380} - \frac{228}{23}\beta < \frac{15}{276} $, $m> 33$ and $\beta<\frac{1}{200}$, and we select $a$ that is big enough to absorb the constant. Taking expectation of \eqref{R_tra_C00} and using H\"older's inequality, we obtain  for $0<\beta< \frac{1}{200}$
\begin{align}\label{R_tra_EN}
	\EN R_{trans} \EN_{C^0,r}
  &  \lesssim  \frac{m_q^{-1} \delta_{q+1}^{\frac12} }{\lambda_{q+1}^{1-\varpi}}  +  \frac{1}{7\cdot 2} \delta_{q+2} 
   \leq \frac17\delta_{q+2},
\end{align}
where we have used \eqref{v_q_EN}, \eqref{z_q_EN1}, \eqref{R_q_EN}, $\varpi< \frac{73}{1380} - \frac{228}{23}\beta $ and $m> 33$, and we select $a$ that is big enough to absorb the constant.

\subsubsection{Estimate on $R_{Nash}$.} 
Let us recall the definition of $R_{Nash}$ as follows:
\begin{align*}
		R_{Nash} &:= \mathcal{R}\left[(\omega_{q+1}\cdot\nabla)(\v_{\ell}+\z_{\ell})\right].
\end{align*}
We write 
\begin{align}\label{R_nash}
	(\omega_{q+1}\cdot\nabla)(\v_{\ell}+\z_{\ell})= (\omega_{q+1}^{(p)}\cdot\nabla)(\v_{\ell}+\z_{\ell})+(\omega_{q+1}^{(c)}\cdot\nabla)(\v_{\ell}+\z_{\ell}).
\end{align}
For the term corresponding to the principle part $\omega_{q+1}^{(p)}$, we have for $t\in[k,k+1]$, $k\in\mathbb{Z}$
\begin{align}\label{R_nash1}
	(\omega_{q+1}^{(p)}\cdot\nabla)(\v_{\ell}+\z_{\ell})=\sum_{j}\sum_{\zeta\in\Lambda_j}a_{\zeta}\left(B_{\zeta}\cdot\nabla\right)(\v_{\ell}+\z_{\ell})e^{i\lambda_{q+1}\zeta\cdot\Phi_{k,j}}.
\end{align}
Let us now estimate $\|a_{\zeta}\left(B_{\zeta}\cdot\nabla\right)(\v_{\ell}+\z_{\ell})\|_{C^N_{t,x}}$. Using \eqref{ell}, \eqref{v_q_C0}, \eqref{z_q_C0}, \eqref{ak_0}, \eqref{ak_1} and \eqref{ak_N}, we have 
 \begin{align}\label{5488}
    \|a_{\zeta}\left(B_{\zeta}\cdot\nabla\right)(\v_{\ell}+\z_{\ell})\|_{C^0_{t,x}} 
    & \leq \|a_{\zeta}\|_{C^{0}_{t,x}}\|\nabla(\v_{\ell}+\z_{\ell})\|_{C^{0}_{t,x}} 
   \nonumber\\ &    \lesssim \lambda_{q}^{\frac85} (\delta_{q+1}^{\frac12} + \|\mathring{R}_{q}\|^{\frac12}_{C^0_{[t-1,t+1],x}})  (\|\v_{q}\|_{C^{0}_{[t-1,t+1],x}}+ \|\z_{q}\|_{C^{0}_{[t-1,t+1],x}}),
\end{align}
and for $N\geq 0$
\begin{align}\label{548}
	&\|a_{\zeta}\left(B_{\zeta}\cdot\nabla\right)(\v_{\ell}+\z_{\ell})\|_{C^N_{t,x}}
	 \leq \|a_{\zeta}\cdot\nabla(\v_{\ell}+\z_{\ell})\|_{C^N_{t,x}}
	\nonumber\\& \lesssim  \sum_{k=0}^{N}\|a_{\zeta}\|_{C^{k}_{t,x}}\|\nabla(\v_{\ell}+\z_{\ell})\|_{C^{N-k}_{t,x}}
	 \lesssim  \sum_{k=0}^{N} m_q^{-k} \lambda_q^{3}  \ell^{-N+k-1}(\|\v_{q}\|_{C^{0}_{[t-1,t+1],x}}+\|\z_{q}\|_{C^{0}_{[t-1,t+1],x}}) \lesssim    m_q^{-N} \lambda_q^{\frac{74}{15}} .
\end{align}
Therefore, using Lemma \ref{SPL} and \eqref{R_nash1}-\eqref{548}, we obtain 
\begin{align}
	& \left\|\mathcal{R}\left((\omega_{q+1}^{(p)}\cdot\nabla)(\v_{\ell}+\z_{\ell})\right)\right\|_{C^{0}_{t,x}} 
 \leq \left\|\mathcal{R}\left((\omega_{q+1}^{(p)}\cdot\nabla)(\v_{\ell}+\z_{\ell})\right)\right\|_{C^0_tC^{\varpi}_x} 
	\nonumber\\ & \lesssim \sum_{j}\sum_{\zeta\in\Lambda_j} \frac{\|a_{\zeta}\left(B_{\zeta}\cdot\nabla\right)(\v_{\ell}+\z_{\ell})\|_{C^0_tC^0_x}}{\lambda_{q+1}^{1-\varpi}}  \nonumber\\ &\quad+ \frac{\|a_{\zeta}\left(B_{\zeta}\cdot\nabla\right)(\v_{\ell}+\z_{\ell})\|_{C^0_tC^{m,\varpi}_x}+\|a_{\zeta}\left(B_{\zeta}\cdot\nabla\right)(\v_{\ell}+\z_{\ell})\|_{C^0_tC^0_{x}}\|\nabla\Phi_{k,j}\|_{C^0_tC^{m,\varpi}_x}}{\lambda_{q+1}^{m-\varpi}}
	\nonumber\\ & \lesssim \sum_{j}\sum_{\zeta\in\Lambda_j} \frac{\|a_{\zeta}\cdot(\v_{\ell}+\z_{\ell})\|_{C^0_tC^0_x}}{\lambda_{q+1}^{1-\varpi}}   + \frac{\|a_{\zeta} \cdot\nabla(\v_{\ell}+\z_{\ell})\|_{C^0_tC^{m+1}_x}+\|a_{\zeta} \cdot\nabla(\v_{\ell}+\z_{\ell})\|_{C^0_tC^0_x}\|\nabla\Phi_{k,j}\|_{C^0_tC^{m+1}_x}}{\lambda_{q+1}^{m-1}} 
    \nonumber\\ & \lesssim \frac{\lambda_{q}^{\frac85} (\delta_{q+1}^{\frac12} + \|\mathring{R}_{q}\|^{\frac12}_{C^0_{[t-1,t+1],x}})  (\|\v_{q}\|_{C^{0}_{[t-1,t+1],x}}+ \|\z_{q}\|_{C^{0}_{[t-1,t+1],x}})}{\lambda_{q+1}^{1-\varpi}}   + \frac{ m_q^{-(m+1)}\lambda_q^{\frac{74}{15}} }{\lambda_{q+1}^{m-1}}
    \nonumber\\ & \lesssim \frac{\lambda_{q}^{\frac85} (\delta_{q+1}^{\frac12} + \|\mathring{R}_{q}\|^{\frac12}_{C^0_{[t-1,t+1],x}})  (\|\v_{q}\|_{C^{0}_{[t-1,t+1],x}}+ \|\z_{q}\|_{C^{0}_{[t-1,t+1],x}})}{\lambda_{q+1}^{1-\varpi}}   + \frac{1}{7\cdot 4}\delta_{q+2} , \label{R_Nash_p0}
\end{align}
 where we have also used  $m> 33$ and $\beta<\frac{1}{200}$.
 
For the term corresponding to the corrector part $\omega_{q+1}^{(c)}$, we have for $t\in[k,k+1]$, $k\in\mathbb{Z}$
\begin{align*}
	&(\omega_{q+1}^{(c)}\cdot\nabla)(\v_{\ell}+\z_{\ell})
	  =\sum_{j}\sum_{\zeta\in\Lambda_j}\left(\left(\left(\frac{\nabla a_{\zeta}}{\lambda_{q+1}}+ia_\zeta(\nabla\Phi_{k,j}-\Id)\zeta\right)\times B_{\zeta}\right)\cdot\nabla\right)(\v_{\ell}+\z_{\ell})e^{i\lambda_{q+1}\zeta\cdot\Phi_{k,j}}.
\end{align*}
 Let us estimate $\left\|\left(\frac{\nabla a_{\zeta}}{\lambda_{q+1}}\cdot\nabla\right)(\v_{\ell}+\z_{\ell})\right\|_{C^N_{t,x}}$.  We have
\begin{align}\label{5533}
    \left\|\left(\frac{\nabla a_{\zeta}}{\lambda_{q+1}}\cdot\nabla\right)(\v_{\ell}+\z_{\ell})\right\|_{C^0_{t,x}} & \leq \lambda_q^{-\frac{23}{4}} \|\nabla a_{\zeta}\|_{C^0_{t,x}} \|\nabla(\v_{\ell}+\z_{\ell})\|_{C^{0}_{t,x}}
    \nonumber\\ & \lesssim m_q^{-1}\lambda_{q}^{-\frac{23}{4}} (\delta_{q+1}^{\frac12} + \|\mathring{R}_{q}\|^{\frac12}_{C^0_{[t-1,t+1],x}})  (\|\v_{q}\|_{C^{0}_{[t-1,t+1],x}}+ \|\z_{q}\|_{C^{0}_{[t-1,t+1],x}}),
\end{align}
and for $N\geq0$
 \begin{align}\label{553}
 	 \left\|\left(\frac{\nabla a_{\zeta}}{\lambda_{q+1}}\cdot\nabla\right)(\v_{\ell}+\z_{\ell})\right\|_{C^N_{t,x}}
 	 & \lesssim \lambda_{q}^{-\frac{23}{4}}  \sum_{k=0}^{N} \|\nabla a_{\zeta}\|_{C^k_{t,x}} \|\nabla(\v_{\ell}+\z_{\ell})\|_{C^{N-k}_{t,x}}
 	\nonumber\\& \lesssim  \lambda_{q}^{-\frac{23}{4}}  \sum_{k=0}^{N} m_q^{-(k+1)} \lambda_q^{3} \lambda_q^{\frac{8}{5}(N-k+1)} (\|\v_{q}\|_{C^{0}_{[t-1,t+1],x}}+\|\z_{q}\|_{C^{0}_{[t-1,t+1],x}})
      \lesssim m_q^{-(N+1)}  \lambda_{q}^{-\frac{49}{60}} .
 \end{align}
Let us estimate $\|a_{\zeta}(\nabla\Phi_{k,j}-\Id)\cdot\nabla(\v_{\ell}+\z_{\ell})\|_{C^0_{t}C^N_x}$ as follows: using \eqref{ak_0}, \eqref{ell} and \eqref{B.8}, we have
\begin{align}\label{5544}
	\|a_{\zeta}(\nabla\Phi_{k,j}-\Id)\cdot\nabla(\v_{\ell}+\z_{\ell})\|_{C^0_{t}C^0_x} 
	  & \lesssim   \|a_{\zeta}\|_{C^0_{t}C^0_x}\|\nabla(\v_{\ell}+\z_{\ell})\|_{C^0_{t}C^0_x} \|\nabla \Phi_{k,j}-\Id\|_{C^0_tC^{0}_x}
      \nonumber\\ & \lesssim  (\delta_{q+1}^{\frac12} + \|\mathring{R}_{q}\|^{\frac12}_{C^0_{[t-1,t+1],x}})  (\|\v_{q}\|_{C^{0}_{[t-1,t+1],x}} +  \|\z_{q}\|_{C^{0}_{[t-1,t+1],x}}),
\end{align}
and for $N\geq0$, using \eqref{548} and \eqref{B.10}, we have
\begin{align}\label{554}
	\|a_{\zeta}(\nabla\Phi_{k,j}-\Id)\cdot\nabla(\v_{\ell}+\z_{\ell})\|_{C^0_{t}C^N_x} 
	  & \lesssim  \sum_{k=0}^{N}\|a_{\zeta}\nabla(\v_{\ell}+\z_{\ell})\|_{C^0_{t}C^k_x} \|\nabla \Phi_{k,j}-\Id\|_{C^0_tC^{N-k}_x}
	\nonumber\\ & \lesssim \sum_{k=0}^{N} m_q^{-k}\lambda_q^{\frac{74}{15}} \lambda_q^{\frac{8}{5}(N-k-1)} \lesssim m_q^{-N}\lambda_q^{\frac{10}{3}}.
\end{align}
Therefore, using Lemma \ref{SPL} and  \eqref{5533}-\eqref{554}, we obtain  
\begin{align}
&\left\|\mathcal{R}\left((\omega_{q+1}^{(c)}\cdot\nabla)(\v_{\ell}+\z_{\ell})\right)\right\|_{C^{0}_{t,x}} 
	\leq \left\|\mathcal{R}\left((\omega_{q+1}^{(c)}\cdot\nabla)(\v_{\ell}+\z_{\ell})\right)\right\|_{C^0_tC^{\varpi}_x} 
    \nonumber\\ & \lesssim   \frac{m_q^{-1}\lambda_{q}^{-\frac{23}{4}} (\delta_{q+1}^{\frac12} + \|\mathring{R}_{q}\|^{\frac12}_{C^0_{[t-1,t+1],x}})  (\|\v_{q}\|_{C^{0}_{[t-1,t+1],x}}+ \|\z_{q}\|_{C^{0}_{[t-1,t+1],x}})}{\lambda_{q+1}^{1-\varpi}}
	+ \frac{m_q^{-(m+2)}  \lambda_{q}^{-\frac{49}{60}}}{\lambda_{q+1}^{m-\varpi}} 
    \nonumber\\ & \quad +   \frac{ (\delta_{q+1}^{\frac12} + \|\mathring{R}_{q}\|^{\frac12}_{C^0_{[t-1,t+1],x}})  (\|\v_{q}\|_{C^{0}_{[t-1,t+1],x}} +  \|\z_{q}\|_{C^{0}_{[t-1,t+1],x}})}{\lambda_{q+1}^{1-\varpi}}
	+ \frac{m_q^{-(m+1)}\lambda_q^{\frac{10}{3}}}{\lambda_{q+1}^{m-\varpi}}
    \nonumber\\ & \lesssim \frac{\lambda_{q}^{\frac85} (\delta_{q+1}^{\frac12} + \|\mathring{R}_{q}\|^{\frac12}_{C^0_{[t-1,t+1],x}})  (\|\v_{q}\|_{C^{0}_{[t-1,t+1],x}}+ \|\z_{q}\|_{C^{0}_{[t-1,t+1],x}})}{\lambda_{q+1}^{1-\varpi}}   + \frac{1}{7\cdot 4}\delta_{q+2}
    \nonumber\\ & \lesssim \frac{\lambda_{q}^{\frac85} (\delta_{q+1}^{\frac12} + \|\mathring{R}_{q}\|^{\frac12}_{C^0_{[t-1,t+1],x}})  (\|\v_{q}\|_{C^{0}_{[t-1,t+1],x}}+ \|\z_{q}\|_{C^{0}_{[t-1,t+1],x}})}{\lambda_{q+1}^{1-\varpi}}   + \frac{1}{7\cdot 4}\delta_{q+2} ,
	\label{R_Nash_c0}
\end{align}
 where we have also used $m> 33$ and $\beta<\frac{1}{200}$.

Combining \eqref{R_nash}, \eqref{R_Nash_p0} and \eqref{R_Nash_c0}, we obtain
\begin{align}
	\left\|R_{Nash}\right\|_{C^{0}_{t,x}}  &  \lesssim  \left\|\mathcal{R}\left((\omega_{q+1}^{(p)}\cdot\nabla)(\v_{\ell}+\z_{\ell})\right)\right\|_{C^{0}_{t,x}} +\left\|\mathcal{R}\left((\omega_{q+1}^{(c)}\cdot\nabla)(\v_{\ell}+\z_{\ell})\right)\right\|_{C^{0}_{t,x}}  
  \nonumber\\ & \lesssim  \frac{\lambda_{q}^{\frac85}   (\delta_{q+1}^{\frac12} + \|\mathring{R}_{q}\|^{\frac12}_{C^0_{[t-1,t+1],x}})  (\|\v_{q}\|_{C^{0}_{[t-1,t+1],x}}+\|\z_{q}\|_{C^{0}_{[t-1,t+1],x}})}{\lambda_{q+1}^{1-\varpi}}     + \frac{1}{7\cdot 2}\delta_{q+2} \label{R_Nash_C0_N}
	 \\ & \lesssim \frac{\lambda_{q}^{\frac85}   \lambda_{q}^{\frac13}  (\lambda_{q}^{\frac13}+\lambda_{q}^{\frac13})}{\lambda_{q+1}^{1-\varpi}}   + \frac{1}{7\cdot 2}\delta_{q+2}
  \leq \frac{1}{7} \lambda_{q+1}^{\frac23},\label{R_Nash_C0}
\end{align}
where we have also used \eqref{v_q_C0}, \eqref{R_q_C0}, \eqref{z_q_C0},  $\varpi< \frac{73}{1380} - \frac{228}{23}\beta < \frac{15}{276} $, $m> 33$ and $\beta<\frac{1}{200}$, and we select $a$ that is big enough to absorb the constant.  
Taking the expectation of \eqref{R_Nash_C0_N} and using H\"older's inequality, we get for $0<\beta< \frac{1}{200}$
\begin{align}\label{R_Nash_EN}
	 \EN R_{Nash}\EN_{C^0,r}
	 & \lesssim   \frac{\lambda_{q}^{\frac85}  \delta_{q+1}^{\frac12} }{\lambda_{q+1}^{1-\varpi}}  + \frac{1}{7\cdot 2}\delta_{q+2}
	   \leq \frac{1}{7}\delta_{q+2},
\end{align}
where we have used \eqref{z_q_EN1}, \eqref{v_q_EN},  \eqref{R_q_EN}, $\varpi< \frac{73}{1380} - \frac{228}{23}\beta  $ and $m> 33$, and we select $a$ that is big enough to absorb the constant.    

\subsubsection{Estimate on $R_{osc}$.}
Let us recall the definition of $R_{osc}$ as follows:
\begin{align*}
		R_{osc}&:=   \sum_{j,j', \zeta+\zeta'\neq0} \mathcal{R} \bigg[\biggl\{\Wb_{\zeta}\otimes\Wb_{\zeta'}-\frac{\Wb_{\zeta}\cdot\Wb_{\zeta'}}{2} \Id \biggr\} \nabla\left(a_{\zeta}a_{\zeta'} \phi_{\zeta}\phi_{\zeta'}\right)\bigg].
\end{align*}
In order to apply Lemma \ref{SPL}, let us rewrite oscillation error as follows, for $t\in[k,k+1]$, $k\in\mathbb{Z}$
\begin{align*}
	R_{osc}&:=   \sum_{j,j', \zeta+\zeta'\neq0} \mathcal{R} \bigg[\biggl\{B_{\zeta}\otimes B_{\zeta'}-\frac{B_{\zeta}\cdot B_{\zeta'}}{2} \Id \biggr\}e^{i\lambda_{q+1}\{\zeta\cdot\Phi_{k,j}+\zeta'\cdot\Phi_{k,j'}\}}
	\nonumber\\ & \qquad \times \left(\nabla\left(a_{\zeta}a_{\zeta'}\right) + i\lambda_{q+1}a_{\zeta}a_{\zeta'}\left((\nabla\Phi_{k,j}-\Id)\cdot\zeta+(\nabla\Phi_{k,j'}-\Id)\cdot\zeta'\right)\right)\bigg].
\end{align*}

We have (using \eqref{ell}, \eqref{ak_0}-\eqref{ak_1} and \eqref{ak_N})
\begin{align}\label{a_kk'_0}
	\|\nabla(a_{\zeta}a_{\zeta'})\|_{C^0_{t,x}} & \lesssim  \|a_{\zeta'}\|_{C^1_{t,x}} \|a_{\zeta}\|_{C^{0}_{t,x}} + \|a_{\zeta}\|_{C^1_{t,x}} \|a_{\zeta'}\|_{C^{0}_{t,x}}
	\lesssim m_q^{-1}(\delta_{q+1} + \|\mathring{R}_{q}\|_{C^0_{[t-1,t+1],x}}),
\end{align}
and for $N\geq0$,
\begin{align}
	\|\nabla(a_{\zeta}a_{\zeta'})\|_{C^N_{t,x}} & \lesssim  \sum_{k=0}^{N+1}\|a_{\zeta}\|_{C^k_{t,x}} \|a_{\zeta'}\|_{C^{N+1-k}_{t,x}}
    \lesssim  \sum_{k=0}^{N+1} m_q^{-k}\lambda_q^3 m_q^{-(N+1-k)}\lambda_q^3 
      \lesssim  m_q^{-(N+1)}\lambda_q^6.
    %  \label{a_kk'_N-delta} \\ &  \lesssim \lambda_q^{\frac{58}{15}N + \frac{136}{15}}. 
    \label{a_kk'_N}
\end{align}
Using \eqref{ak_0}, \eqref{a_kk'_N}, \eqref{B.8} and \eqref{B.10}, we have 
\begin{align}\label{5622}
	&\lambda_{q+1}\|a_{\zeta}a_{\zeta'}\left((\nabla\Phi_{k,j}-\Id)\cdot\zeta+(\nabla\Phi_{k,j'}-\Id)\cdot\zeta'\right)\|_{C^0_tC^0_x}
	\nonumber\\& \lesssim    \lambda_{q}^{\frac{23}{4}} \|a_{\zeta}\|_{C^0_tC^0_x} \|a_{\zeta'}\|_{C^0_tC^0_x}\|(\nabla\Phi_{k,j}-\Id)\cdot\zeta+(\nabla\Phi_{k,j'}-\Id)\cdot\zeta'\|_{C^0_tC^{0}_x}
      \lesssim \lambda_q^{\frac{83}{20}} (\delta_{q+1} + \|\mathring{R}_{q}\|_{C^0_{[t-1,t+1],x}}),
\end{align}
and for $N\geq0$
\begin{align}\label{562}
	&\lambda_{q+1}\|a_{\zeta}a_{\zeta'}\left((\nabla\Phi_{k,j}-\Id)\cdot\zeta+(\nabla\Phi_{k,j'}-\Id)\cdot\zeta'\right)\|_{C^0_tC^N_x}
	\nonumber\\& \lesssim    \lambda_{q}^{\frac{23}{4}} \sum_{k=0}^{N}\|a_{\zeta}a_{\zeta'}\|_{C^0_tC^k_x}\|(\nabla\Phi_{k,j}-\Id)\cdot\zeta+(\nabla\Phi_{k,j'}-\Id)\cdot\zeta'\|_{C^0_tC^{N-k}_x}
      \lesssim    \lambda_{q}^{\frac{23}{4}} \sum_{k=0}^{N} m_q^{-k}\lambda_q^6   \lambda_q^{\frac{8}{5}(N-k-1)}
     \lesssim    m_q^{-N} \lambda_q^{ \frac{203}{20}}.
\end{align}
Applying Lemma \ref{SPL} and using \eqref{a_kk'_0}-\eqref{562}, we get  %(Note that here we need to put bound $\lambda_{q}^{-\frac{b}{10}-\frac12(\frac{13}{10}-\beta)(b-1)}$ from \eqref{B.8}) 
\begin{align}
	 \|R_{osc}\|_{C^0_{t,x}}   
     &  \lesssim  \frac{m_q^{-1}(\delta_{q+1} + \|\mathring{R}_{q}\|_{C^0_{[t-1,t+1],x}}) }{\lambda_{q+1}^{1-\varpi}} 
	 + \frac{m_q^{-(m+2)} \lambda_q^{6} + m_q^{-(m+1)} \lambda_q^{ \frac{203}{20}}}{\lambda_{q+1}^{m-1}}
     \nonumber\\ &   \lesssim  \frac{m_q^{-1}(\delta_{q+1} + \|\mathring{R}_{q}\|_{C^0_{[t-1,t+1],x}}) }{\lambda_{q+1}^{1-\varpi}} 
	 + \frac{1}{7\cdot 2}\delta_{q+2}
     \label{R_osc_C01} 
	 \\ &  \lesssim    \frac{m_q^{-1}\lambda_{q}^{\frac{2}{3}}}{\lambda_{q+1}^{1-\varpi}} + \frac{1}{7\cdot 2}\delta_{q+2}
	  \leq \frac17\lambda_{q+1}^{\frac23},
	 \label{R_osc_C0}
\end{align}
where we have also used \eqref{R_q_C0}, $\varpi< \frac{73}{1380} - \frac{228}{23}\beta < \frac{15}{276}  $, $m > 33$ and $\beta< \frac{1}{200}$, and we select $a$ that is big enough to absorb the constant.  Taking expectation of \eqref{R_osc_C01}, we obtain for $0<\beta<  \frac{1}{200}$
\begin{align}\label{R_osc_EN}
	\EN R_{osc}\EN_{C^0,r}  
 &  \lesssim \frac{m_q^{-1} \delta_{q+1} }{\lambda_{q+1}^{1-\varpi}} +  \frac{1}{7\cdot 2}\delta_{q+2}
	  \leq   \frac17\delta_{q+2},
\end{align}
where we have used \eqref{R_q_EN},  $\varpi< \frac{73}{1380} - \frac{228}{23}\beta  $ and $m > 33$, and we select $a$ that is big enough to absorb the constant.

\subsubsection{Estimate on $R_{corr}$.}
Let us recall the definition of $R_{corr}$ as follows:
\begin{align*}
		R_{corr}& :=  \mathcal{R}\bigg[(\partial_t+(\v_{\ell}+\z_{\ell})\cdot\nabla)\omega^{(c)}_{q+1}\bigg] + \omega_{q+1}^{(c)}\mathring{\otimes}\omega_{q+1}+\omega_{q+1}^{(p)}\mathring{\otimes}\omega_{q+1}^{(c)}.
\end{align*}
By \eqref{w_p_C0} and \eqref{w_c_C0},  we estimate  
 \begin{align}\label{5.52}
\|\omega_{q+1}^{(c)}\mathring{\otimes}\omega_{q+1}+\omega_{q+1}^{(p)}\mathring{\otimes}\omega_{q+1}^{(c)}\|_{C^0_{t,x}}  
& \leq 2\| \omega_{q+1}^{(c)} \|_{C^0_{t,x}}\|\omega_{q+1}^{(p)} \|_{C^0_{t,x}}   + \| \omega_{q+1}^{(c)}\|^2_{C^0_{t,x}}   
	\nonumber\\  & \lesssim  \lambda_q^{\frac13} \lambda_{q}^{-\frac{17}{48} - \frac{27}{8}\beta} +  \lambda_{q}^{-\frac{17}{24} - \frac{27}{4}\beta}
      \leq \frac{1}{7\cdot2}\lambda_{q+1}^{\frac23},
\end{align} 
and by H\"older's inequality, \eqref{w_p_EN} and \eqref{w_c_EN}, we estimate for  $0<\beta < \frac{1}{200}$

\begin{align}\label{5.53}
	\EN\omega_{q+1}^{(c)}\mathring{\otimes}\omega_{q+1}+\omega_{q+1}^{(p)}\mathring{\otimes}\omega_{q+1}^{(c)}\EN_{C^0,r}  
 & \leq 2 \EN \omega_{q+1}^{(c)} \EN_{C^0,2r}\EN\omega_{q+1}^{(p)} \EN_{C^0,2r} + \EN \omega_{q+1}^{(c)}\EN_{C^0,2r}^2   
	\nonumber\\ & \lesssim  \lambda_{q}^{-\frac{11}{16}-\frac{27}{8}\beta} \delta_{q+1} + \lambda_{q}^{-\frac{11}{8}-\frac{27}{4}\beta} \delta_{q+1}   
        \lesssim \lambda_{q}^{-\frac{11}{16}-\frac{27}{8}\beta} \delta_{q+1} 
	  \leq   \frac{1}{7\cdot2}\delta_{q+2}.
\end{align}

Let us now find the estimates for the error $\mathcal{R}\left[(\partial_t+(\v_{\ell}+\z_{\ell})\cdot\nabla)\omega^{(c)}_{q+1}\right]$. We rewrite
\begin{align*}
	& (\partial_t+(\v_{\ell}+\z_{\ell})\cdot\nabla)\omega^{(c)}_{q+1}
	  =\sum_{j}\sum_{\zeta\in\Lambda_j} \left((\partial_t+(\v_{\ell}+\z_{\ell})\cdot\nabla)\left(\frac{\nabla a_{\zeta}}{\lambda_{q+1}}+i a_{\zeta}(\nabla\Phi_{k,j}-\Id)\zeta\right)\right)\times B_{\zeta}e^{i\lambda_{q+1}\zeta\cdot\Phi_{k,j}}.
\end{align*}
Using \eqref{ell}, \eqref{ak_2} and \eqref{ak_N}, we obtain
 \begin{align}\label{5711}
	\|(\partial_t+(\v_{\ell}+\z_{\ell})\cdot\nabla) \frac{\nabla a_{\zeta}}{\lambda_{q+1}}\|_{C^0_{t,x}}  
	    & \lesssim \lambda_{q}^{-\frac{23}{4}} \|a_{\zeta}\|_{C^{2}_{t,x}} + \lambda_{q}^{-\frac{23}{4}}  \|\v_{\ell}+\z_{\ell}\|_{C^{0}_{t,x}} \|a_{\zeta}\|_{C^{2}_{t,x}}
    \nonumber\\ & \lesssim m_q^{-1}\lambda_q^{\frac{23}{60}}  (\delta_{q+1}^{\frac12} + \|\mathring{R}_{q}\|^{\frac12}_{C^0_{[t-1,t+1],x}}) (1+\|\v_{q}\|_{C^0_{[t-1,t+1],x}} + \|\z_{q}\|_{C^0_{[t-1,t+1],x}}),
\end{align}
and for $N\geq 0$
\begin{align}\label{571}
	& \|(\partial_t+(\v_{\ell}+\z_{\ell})\cdot\nabla) \frac{\nabla a_{\zeta}}{\lambda_{q+1}}\|_{C^N_{t,x}}  
	   \lesssim \lambda_{q}^{-b} \|a_{\zeta}\|_{C^{N+2}_{t,x}} + \lambda_{q}^{-\frac{23}{4}} \sum_{k=0}^{N}\|a_{\zeta}\|_{C^{k+2}_{t,x}} \|\v_{\ell}+\z_{\ell}\|_{C^{N-k}_{t,x}}
    \nonumber\\ & \lesssim \lambda_{q}^{-\frac{23}{4}+3} m_q^{-(N+2)}  + \lambda_{q}^{-\frac{23}{4}+3} \sum_{k=0}^{N} m_q^{-(k+2)}   \ell^{-N+k} \|\v_{q}+\z_{q}\|_{C^{0}_{[t-1,t+1],x}} 
      \lesssim \lambda_{q}^{-\frac{49}{12}} m_q^{-(N+2)} .
\end{align}
Using \eqref{Phi_kj}, \eqref{31244}-\eqref{3124},  \eqref{5544}-\eqref{554}, \eqref{5488}-\eqref{548} and \eqref{B.10}, we obtain 
 \begin{align}\label{5722}
&	\|(\partial_t+(\v_{\ell}+\z_{\ell})\cdot\nabla)\left( a_{\zeta}(\nabla\Phi_{k,j}-\Id)\right)\|_{C^0_tC^0_{x}}
%\nonumber\\ & \leq \|[(\partial_t+(\v_{\ell}+\z_{\ell})\cdot\nabla) a_{\zeta} ] (\nabla\Phi_{k,j}-\Id)\|_{C^0_tC^0_{x}} + \| a_{\zeta} \nabla (\v_{\ell}+\z_{\ell}) \nabla\Phi_{k,j}\|_{C^0_tC^0_{x}}
\nonumber\\ & \leq \|(\partial_t+(\v_{\ell}+\z_{\ell})\cdot\nabla) a_{\zeta} \|_{C^0_tC^0_{x}} \|\nabla\Phi_{k,j}-\Id\|_{C^0_tC^0_{x}} +  \| a_{\zeta}(\nabla\Phi_{k,j}-\Id)\cdot \nabla (\v_{\ell}+\z_{\ell})\|_{C^0_tC^0_{x}}  + \| a_{\zeta}\nabla (\v_{\ell}+\z_{\ell})\|_{C^0_tC^0_{x}} 
\nonumber\\ & \lesssim m_q^{-1}(\delta_{q+1}^{\frac12} + \|\mathring{R}_{q}\|^{\frac12}_{C^0_{[t-1,t+1],x}}) (1+\|\v_{q}\|_{C^0_{[t-1,t+1],x}} + \|\z_{q}\|_{C^0_{[t-1,t+1],x}}) \lambda_q^{-\frac85} 
\nonumber\\ & \quad +  (\delta_{q+1}^{\frac12} + \|\mathring{R}_{q}\|^{\frac12}_{C^0_{[t-1,t+1],x}})  (\|\v_{q}\|_{C^{0}_{[t-1,t+1],x}}+ \|\z_{q}\|_{C^{0}_{[t-1,t+1],x}})  
\nonumber\\ & \quad +  \lambda_{q}^{\frac85} (\delta_{q+1}^{\frac12} + \|\mathring{R}_{q}\|^{\frac12}_{C^0_{[t-1,t+1],x}})  (\|\v_{q}\|_{C^{0}_{[t-1,t+1],x}}+ \|\z_{q}\|_{C^{0}_{[t-1,t+1],x}}) 
\nonumber\\ & \lesssim m_q^{-1} \lambda_q^{-\frac{8}{5}}(\delta_{q+1}^{\frac12} + \|\mathring{R}_{q}\|^{\frac12}_{C^0_{[t-1,t+1],x}}) (1+\|\v_{q}\|_{C^0_{[t-1,t+1],x}} + \|\z_{q}\|_{C^0_{[t-1,t+1],x}}).
\end{align}
and for $N\geq 0$
\begin{align}\label{572}
&	\|(\partial_t+(\v_{\ell}+\z_{\ell})\cdot\nabla)\left( a_{\zeta}(\nabla\Phi_{k,j}-\Id)\right)\|_{C^0_tC^N_{x}}
\nonumber\\ & \leq \|[(\partial_t+(\v_{\ell}+\z_{\ell})\cdot\nabla) a_{\zeta} ] (\nabla\Phi_{k,j}-\Id)\|_{C^0_tC^N_{x}} +  \| a_{\zeta}(\nabla\Phi_{k,j}-\Id)\cdot \nabla (\v_{\ell}+\z_{\ell})\|_{C^0_tC^N_{x}}  + \| a_{\zeta}\nabla (\v_{\ell}+\z_{\ell})\|_{C^0_tC^N_{x}} 
\nonumber\\ &\lesssim \sum_{k=0}^{N} \|(\partial_t+(\v_{\ell}+\z_{\ell})\cdot\nabla) a_{\zeta}\|_{C^0_tC^k_{x}} \|\nabla\Phi_{k,j}-\Id\|_{C^0_tC^{N-k}_{x}} + m_q^{-N} \lambda_q^{\frac{10}{3}}  + m_q^{-N} \lambda_q^{\frac{74}{15}}
\nonumber\\ &\lesssim \sum_{k=0}^{N} m_q^{-(k+1)} \lambda_q^{\frac{10}{3}} \lambda_q^{\frac{8}{5}(N-k-1)}    + m_q^{-N} \lambda_q^{\frac{74}{15}}
 \lesssim    m_q^{-(N+1)} \lambda_q^{\frac{26}{15}}.
\end{align}
In view of Lemma \ref{SPL} and using \eqref{5711}-\eqref{572}, we estimate 
	\begin{align}
	& \|\mathcal{R}\left[(\partial_t+(\v_{\ell}+\z_{\ell})\cdot\nabla)\omega^{(c)}_{q+1}\right]\|_{C^0_{t,x}}
   \nonumber\\  & \lesssim \frac{m_q^{-1}\lambda_q^{\frac{23}{60}} (\delta_{q+1}^{\frac12} + \|\mathring{R}_{q}\|^{\frac12}_{C^0_{[t-1,t+1],x}}) (1+\|\v_{q}\|_{C^0_{[t-1,t+1],x}} + \|\z_{q}\|_{C^0_{[t-1,t+1],x}})}{\lambda_{q+1}^{1-\varpi}} + \frac{1}{7\cdot 4}\delta_{q+2}  \label{R_corr_C01}
\\ & \lesssim \frac{m_q^{-1}\lambda_q^{\frac{23}{60}} \lambda_q^{\frac23} }{\lambda_{q+1}^{1-\varpi}} + \frac{1}{7\cdot 4}\delta_{q+2} \leq    \frac{1}{7\cdot2}\lambda_{q+1}^{\frac23},\label{R_corr_C0}
\end{align}
where we have used \eqref{z_q_C0}, \eqref{v_q_C0}, \eqref{B.10}, $\varpi< \frac{73}{1380} - \frac{228}{23}\beta < \frac{15}{276}$ and $m > 33$, and we select $a$ that is big enough to absorb the constant. 
Taking expectation of \eqref{R_corr_C01} and using H\"older's inequality, \eqref{z_q_EN1}, \eqref{v_q_EN} and \eqref{R_q_EN}, we estimate for $0<\beta< \frac{1}{200} $
\begin{align}\label{559}
	 \EN\mathcal{R}\left[(\partial_t+(\v_{\ell}+\z_{\ell})\cdot\nabla)\omega^{(c)}_{q+1}\right]\EN_{C^0,r}
%	\nonumber\\& \lesssim \lambda_{q}^{-\frac{4}{5}+2\varpi}  (\EN\mathring{R}_{q}\EN_{C^0,r}^{\frac12}+\delta_{q+1}^{\frac12}) \left(1+ \EN\v_{q}\EN_{C^0,2r}+\EN\z_{q}\EN_{C^0,2r}\right)
%	\left[1
%	+ \frac{\lambda_{q}^{\frac85(m+1)}}{\lambda_{q+1}^{m-1}} \right]
%	\nonumber\\ 
	& \lesssim \lambda_q^{-\frac{23}{4} + \frac{23}{4}\varpi}  m_q^{-1}\lambda_q^{\frac{23}{60}}  \delta_{q+1}^{\frac12}  
	+ \frac{1}{7\cdot 4}\delta_{q+2}
	  \leq  \frac{1}{7\cdot2} \delta_{q+2},
\end{align}
where we have used \eqref{z_q_EN1}, \eqref{v_q_EN}, \eqref{R_q_EN},  $\varpi< \frac{73}{1380} - \frac{228}{23}\beta $ and $m > 33$, and  we select $a$ that is big enough to absorb the constant.   Combining \eqref{5.52}-\eqref{5.53} and \eqref{R_corr_C0}-\eqref{559}, we arrive at  
\begin{align}\label{R_corrr_C0}
	\left\|R_{corr}\right\|_{C^{0}_{t,x}}  &  \lesssim  \left\|\omega_{q+1}^{(c)}\mathring{\otimes}\omega_{q+1}+\omega_{q+1}^{(p)}\mathring{\otimes}\omega_{q+1}^{(c)}\right\|_{C^{0}_{t,x}} +\left\|\mathcal{R}\left[(\partial_t+(\v_{\ell}+\z_{\ell})\cdot\nabla)\omega^{(c)}_{q+1}\right]\right\|_{C^{0}_{t,x}}  
	     \leq \frac17\lambda_{q+1}^{\frac23},
\end{align} 
and 
\begin{align}\label{R_corrr_EN}
	\EN R_{corr}\EN_{C^0,r}
	&  \lesssim \EN\omega_{q+1}^{(c)}\mathring{\otimes}\omega_{q+1}+\omega_{q+1}^{(p)}\mathring{\otimes}\omega_{q+1}^{(c)}\EN_{C^0,r} +\EN\mathcal{R}\left[(\partial_t+(\v_{\ell}+\z_{\ell})\cdot\nabla)\omega^{(c)}_{q+1}\right]\EN_{C^0,r}  
	     \leq \frac17\delta_{q+2},
\end{align}
for  $0<\beta< \frac{1}{200} $.
\subsubsection{Estimate on $R_{com1}$.} 
Let us recall the definition of $R_{com1}$ as follows:
\begin{align*}
		R_{com1} &:=  (\v_\ell+\z_\ell) \mathring{\otimes} (\v_\ell+\z_\ell) - \left(\left((\v_q+\z_q) \mathring{\otimes} (\v_q+\z_q)\right)\ast_{x}\uppsi_{\ell}\right)\ast_{t}\psi_{\ell}.
\end{align*}
Using \eqref{z_q_C0} and \eqref{v_q_C0}, we estimate
\begin{align}\label{R_com1_C0}
	\|R_{com1}\|_{C^0_{t,x}}\leq 2\|\v_{q}+\z_{q}\|^2_{C^0_{t,x}}\leq 4(\lambda_{q}^{\frac23}+\lambda_{q}^{\frac23})\leq \frac{1}{7}\lambda_{q+1}^{\frac23},
\end{align}
where  we select $a$ that is big enough to absorb the constant. Expanding $R_{com1}$ and applying an $L^{\infty}$-based version of the mollification estimate from \cite[Lemma 5.1]{BCD}, we obtain the following estimate for any
$t\in\R$ and $\delta\in(0,\frac{1}{12})$:
\begin{align}\label{3.64}
	&\|R_{com1}(t)\|_{\L^{\infty}}
	  \lesssim \ell^2\|\v_{q}\|^2_{C^1_{[t-1,t],x}}+\ell^2\|\z_{q}\|^2_{C^0_{[t-1,t]}C^1_x}+\ell^{1-2\delta}\|\z_{q}\|^2_{C^{\frac12-\delta}_{[t-1,t]}C^0_x}.
\end{align}
Taking expectation of \eqref{3.64} and using \eqref{v_q_C1}, \eqref{z_q_EN1}-\eqref{z_q_EN2} and \eqref{z_q_EN3}, we find for $0<\delta<\frac{1}{12}$ and $0<\beta< \frac{1}{200}$
\begin{align}\label{R_com1_EN}
	&\EN R_{com1}\EN_{C^0,r} \lesssim \left(\ell^2\lambda_{q}^{\frac{14}{5}}\delta_{q} + \ell^2 \EN \z_{q}\EN_{C^1,2r}^2 +\ell^{1-2\delta}\EN\z_{q}\EN_{C_t^{\frac12-\delta}C^0_x,2r}^2\right)
	\nonumber\\ & \lesssim \left(\ell^2\lambda_{q}^{\frac{14}{5}}\delta_{q} + \ell^2 \lambda_{q}^{\frac23}(2rL)^2 +\ell^{1-2\delta}\lambda_{q}^{\frac23}(2rL^2)^2\right)
	 \lesssim \lambda_{q}^{-\frac25} {\delta_{q}} +\lambda_{q}^{\frac{16}{5}\delta-\frac85}\lambda_{q}^{\frac23}
	  \lesssim \lambda_{q}^{-\frac25} {\delta_{q}} +\lambda_{q}^{-\frac{4}{3}}\lambda_{q}^{\frac23}
	    \leq {\frac{1}{7}\delta_{q+2}},
\end{align}
where we select $a$ that is big enough to absorb the constant.

\subsubsection{Estimate on $R_{com2}$.}
Let us recall the definition of $R_{com2}$ as follows:
\begin{align*}
		R_{com2}& :=   \v_{q+1}\mathring{\otimes}(\z_{q+1}-\z_{\ell})   + (\z_{q+1}-\z_{\ell})\mathring{\otimes} \v_{q+1} + \z_{q+1}\mathring{\otimes}\z_{q+1}
		- \z_{\ell}\mathring{\otimes}\z_{\ell} - \mathcal{R}(\z_{q+1}-\z_{\ell}).
\end{align*}
Using \eqref{v_q+1_C0}, \eqref{z_q_C0} and \eqref{IDO_B}, we estimate
\begin{align}
	\|\v_{q+1}\mathring{\otimes}(\z_{q+1}-\z_{\ell})   + (\z_{q+1}-\z_{\ell})\mathring{\otimes} \v_{q+1}\|_{C^0_{t,x}} 
	 & \leq  2\|\v_{q+1}\|_{C^0_{t,x}}\|\z_{q+1}-\z_{\ell}\|_{C^0_{t,x}} 
	\nonumber \\ & \leq \frac{1}{7\cdot4\cdot2}\left[\lambda_{q+1}^{\frac23}  + \lambda_{q+1}^{\frac13}\lambda_{q}^{\frac13}\right]
	  \leq \frac{1}{7\cdot4}\lambda_{q+1}^{\frac23},\label{5.68}\\
	\|\z_{q+1}\mathring{\otimes}\z_{q+1}
	- \z_{\ell}\mathring{\otimes}\z_{\ell}\|_{C^0_{t,x}}  \leq \|\z_{q+1}\|_{C^0_{t,x}}^2 + \|\z_{q}\|_{C^0_{t,x}}^2
	  & \lesssim \frac{1}{(7\cdot4\cdot2\cdot2)^2} \left[ \lambda_{q+1}^{\frac23}+ \lambda_{q}^{\frac23} \right]
	  \leq \frac{1}{7\cdot4}\lambda_{q+1}^{\frac23},\label{5.69}\\
	\|\mathcal{R}(\z_{q+1}-\z_\ell)\|_{C^0_{t,x}}  \leq \|\z_{q+1}-\z_\ell\|_{C^0_{t,x}} 
	 &  \leq \frac{1}{7\cdot4\cdot2\cdot2}\left[\lambda_{q+1}^{\frac13}+\lambda_{q}^{\frac13}\right]
	  \leq \frac{1}{7\cdot4}\lambda_{q+1}^{\frac23},\label{5.70}
\end{align}
where we select $a$ that is big enough to absorb the constant. Combining \eqref{5.68}-\eqref{5.70}, we get 
\begin{align}\label{R_com2_C0}
	\|R_{com2}\|_{C^0_{t,x}} \leq \frac{1}{7}\lambda_{q+1}^{\frac23}.
\end{align}
Let us now do the expectation estimates for $R_{com2}$. Using H\"older's inequality, \eqref{z_diff_EN} and \eqref{v_q+1_EN}, we estimate
\begin{align*}
	\EN\v_{q+1}\mathring{\otimes}(\z_{q+1}-\z_{q})   + (\z_{q+1}-\z_{q})\mathring{\otimes} \v_{q+1}\EN_{C^0,r} &\leq \EN\v_{q+1}\EN_{C^0,2r} \EN\z_{q+1}-\z_{q}\EN_{C^0,2r}
\lesssim (1-\delta_{q+1}^{\frac12}) \lambda_{q}^{-(\frac13-\varepsilon)}
	  \leq \lambda_{q}^{-(\frac13-\varepsilon)}.
\end{align*}
Using H\"older's inequality, \eqref{z_q_EN1} and \eqref{z_diff_EN}, we obtain
\begin{align*}
	\EN\z_{q+1}\mathring{\otimes}(\z_{q+1}-\z_{q})  \EN_{C^0,r} &\leq \EN\z_{q+1}\EN_{C^0,2r} \EN\z_{q+1}-\z_{q}\EN_{C^0,2r}
	  \lesssim  \lambda_{q}^{-(\frac13-\varepsilon)}.
\end{align*}
  A standard mollification estimate gives for any $\delta\in(0,\frac{1}{12})$
\begin{align*}
	\|\z_{\ell}(t)- \z_{q}(t)\|_{\L^{\infty}}\lesssim \ell \|\z_{q}\|_{C^0_{[t-1,t]}C^1_{x}} + \ell^{\frac12-\delta}\|\z_{q}\|_{C^{\frac12-\delta}_{[t-1,t]}C^0_x},
\end{align*}
which gives (using \eqref{z_q_EN2} and \eqref{z_q_EN3})
\begin{align}\label{z_ell_diff}
	\EN\z_{\ell}-\z_{q}\EN_{C^0,2r} & \lesssim \ell\sup_{t\in\R}\bigg(\mathbb{E}\left[\|\z_{q}\|^{2r}_{C^0_{[t-1,t+1]}C^1_x}\right]\bigg)^{\frac{1}{2r}} + \ell^{\frac12-\delta}\sup_{t\in\R}\bigg(\mathbb{E}\left[\|\z_{q}\|^{2r}_{C^{\frac12-\delta}_{[t-1,t+1]}C^0_x}\right]\bigg)^{\frac{1}{2r}}
	 \lesssim \ell\lambda_{q}^{\frac13} + \ell^{\frac{5}{12}}\lambda_{q}^{\frac{1}{3}} \lesssim \lambda_{q}^{-\frac13}.
\end{align}
In view of  H\"older's inequality, \eqref{z_q_EN1}, \eqref{z_diff_EN}, \eqref{v_q+1_EN} and \eqref{z_ell_diff}, we estimate for  $0<\beta < \frac{1}{200}$
\begin{align}\label{R_com2_EN}
	& \EN R_{com2}\EN_{C^0,r}
	\nonumber\\&  \leq \EN \v_{q+1}\mathring{\otimes}(\z_{q+1}-\z_{\ell})\EN_{C^0,r}   + \EN(\z_{q+1}-\z_{\ell})\mathring{\otimes} \v_{q+1}\EN_{C^0,r} + \EN\z_{q+1}\mathring{\otimes}\z_{q+1}
	- \z_{\ell}\mathring{\otimes}\z_{\ell}\EN_{C^0,r} 
	 +\EN\mathcal{R}(\z_{q+1}-\z_{\ell})\EN_{C^0,r}
	\nonumber\\ & \leq  \EN \v_{q+1}\mathring{\otimes}(\z_{q+1}-\z_{q})\EN_{C^0,r} + \EN \v_{q+1}\mathring{\otimes}(\z_{q}-\z_{\ell})\EN_{C^0,r} + \EN(\z_{q+1}-\z_{q})\mathring{\otimes} \v_{q+1}\EN_{C^0,r}  
	\nonumber \\& \quad + \EN(\z_{q}-\z_{\ell})\mathring{\otimes} \v_{q+1}\EN_{C^0,r}  + \EN\z_{q+1}\mathring{\otimes}(\z_{q+1}-\z_{q})\EN_{C^0,r} +\EN(\z_{q+1}-\z_{q})\mathring{\otimes}\z_{q+1}\EN_{C^0,r} 
	\nonumber\\ & \quad +\EN(\z_{q}-\z_{\ell})\mathring{\otimes}\z_{q}\EN_{C^0,r} +\EN\z_{\ell}\mathring{\otimes}(\z_{q}-\z_{\ell})\EN_{C^0,r} +\EN\mathcal{R}(\z_{q+1}-\z_{q})\EN_{C^0,r} +\EN\mathcal{R}(\z_{q}-\z_{\ell})\EN_{C^0,r}
	\nonumber\\ & \leq  2\EN \v_{q+1}\EN_{C^0,2r}\EN\z_{q+1}-\z_{q}\EN_{C^0,2r} + 2\EN \v_{q+1}\EN_{C^0,2r}\EN\z_{q}-\z_{\ell}\EN_{C^0,2r} 
	   	  + 2 \EN\z_{q+1}\EN_{C^0,2r}\EN\z_{q+1}-\z_{q}\EN_{C^0,2r}  \nonumber\\ & \quad
 +2\EN\z_{q}-\z_{\ell}\EN_{C^0,2r}\EN\z_{q}\EN_{C^0,2r}  +\EN\z_{q+1}-\z_{q}\EN_{C^0,r} +\EN\z_{q}-\z_{\ell}\EN_{C^0,r}
 \nonumber \\ & \lesssim  \lambda_{q}^{-(\frac13-\varepsilon)} + \lambda_{q}^{-\frac13}
   \lesssim  \lambda_{q}^{-(\frac13-\varepsilon)}
    \leq    \frac17\delta_{q+2},
\end{align}
where we have used  $0<\varepsilon< \frac{1}{198.375}-\beta$ and  we select $a$ that is big enough to absorb the constant. 

Combining \eqref{R_lin2}-\eqref{R_lin_r}, \eqref{R_tra_C0}-\eqref{R_tra_EN}, \eqref{R_Nash_C0}-\eqref{R_Nash_EN}, \eqref{R_osc_C0}-\eqref{R_osc_EN}, \eqref{R_corrr_C0}-\eqref{R_corrr_EN}, \eqref{R_com1_C0}-\eqref{R_com1_EN} and \eqref{R_com2_C0}-\eqref{R_com2_EN}, we  obtain
\begin{align*}
	\|\mathring{R}_{q+1}\|_{C^0_{t,x}}\leq \lambda_{q+1}^{\frac23} \;\; \text{ and }\;\; \EN\mathring{R}_{q+1}\EN_{C^0,r}\leq \delta_{q+2}.
\end{align*}
Hence the inductive estimates \eqref{R_q_C0} and \eqref{R_q_EN} hold true at the level $q+1$.

\section{Stationary solutions to stochastic hypodissipative Navier-Stokes equations}\setcounter{equation}{0}\label{sec6}
 Let us take the trajectory space $\mathcal{T}=C(\R;C^{\kappa})\times C(\R;C^{\kappa})$ and $\kappa=\frac{\vartheta}{2}$ in this section. The corresponding shifts $S_{t}$, $t\in\R$, on trajectories are given by 
\begin{align*}
	S_{t}(\u,\W)(\cdot)=(\u(\cdot+t),\W(\cdot+t)-\W(t)),\;\;\; t\in\R, \;\; \; (\u,\W)\in\mathcal{T}.
\end{align*}
 Recall the concept of stationary solution that was presented in Definition \ref{SS}. Our main result of this section is existence of stationary solutions which are constructed as limits of ergodic averages of solutions from Theorem \ref{MR-SHNSE}.  Specifically, this indicates that they are not unique.
\begin{theorem}\label{MR-SS-SHNSE}
	Let $\u$ be a solution of system \eqref{eqn_stochatic_u-SHNSE} obtained in Theorem \ref{MR-SHNSE} and satisfying \eqref{u_EN}. Then there exists a sequence $T_{n}\to\infty$ and a stationary solution $\left((\widetilde{\Omega},\widetilde{\mathcal{F}},\widetilde{\mathbb{P}}),\widetilde{\u},\widetilde{\W}\right)$ to \eqref{eqn_stochatic_u-SHNSE} such that 
	\begin{align*}
		\frac{1}{T_{n}}\int_{0}^{T_{n}}\mathcal{L}\left[S_{t}(\u,\W)\right]\d t\to \mathcal{L}[\widetilde{\u},\widetilde{\W}],
	\end{align*}
weakly in the sense of probability measures on $\mathcal{T}$ as $n\to\infty$.
\end{theorem}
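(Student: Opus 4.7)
The strategy is a Krylov--Bogoliubov argument in the spirit of \cite{Hofmanova+Zhu+Zhu_Arxiv}. Starting from the solution $(\u,\W)$ of system \eqref{eqn_stochatic_u-SHNSE} furnished by Theorem \ref{MR-SHNSE}, I would consider the ergodic averages
\[
\mu_T := \frac{1}{T}\int_{0}^{T}\mathcal{L}[S_t(\u,\W)]\,\d t
\]
as probability measures on the Polish space $\mathcal{T}=C(\R;C^{\kappa})\times C(\R;C^{\kappa})$ with $\kappa=\vartheta/2<\vartheta$. The plan is to extract a weakly convergent subsequence $\mu_{T_n}\rightharpoonup\nu$, realize $\nu$ as the law of a pair $(\widetilde\u,\widetilde\W)$ on a new probability space via the Jakubowski--Skorokhod theorem, and then verify that this pair is a stationary analytically weak solution.

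The first step is tightness of $\{\mu_T\}_{T\geq 1}$ on $\mathcal{T}$. The uniform moment bound \eqref{u_EN} provides $\sup_{t\in\R}\mathbb{E}[\sup_{t\leq s\leq t+1}\|\u(s)\|_{C^{\vartheta}}]<\infty$; combining the equation \eqref{eqn_stochatic_u-SHNSE} with Proposition \ref{thm_z_alpha} yields an analogous uniform-in-$t$ temporal H\"older modulus in a weaker spatial norm. Since $C^{\vartheta}$ embeds compactly into $C^{\kappa}$ and the temporal modulus gives equicontinuity on each bounded interval, a diagonal Arzel\`a--Ascoli argument furnishes compact sets in $C(\R;C^{\kappa})$ of arbitrarily large $\mu_T$-mass, uniformly in $T$. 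Analogous tightness for the $\W$-component follows from Kolmogorov's criterion and Proposition \ref{thm_z_alpha}. Shift invariance of these uniform-in-$t$ bounds ensures tightness is preserved by the averaging, and Prokhorov yields a subsequence $T_n\to\infty$ with $\mu_{T_n}\rightharpoonup\nu$.

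Shift invariance of $\nu$ is immediate from
\[
\left|\int F\circ S_s\,\d\mu_{T_n}-\int F\,\d\mu_{T_n}\right|=\frac{1}{T_n}\left|\int_{T_n}^{T_n+s}\mathbb{E}[F\circ S_t(\u,\W)]\,\d t-\int_{0}^{s}\mathbb{E}[F\circ S_t(\u,\W)]\,\d t\right|\leq\frac{2|s|\,\|F\|_{\infty}}{T_n},
\]
valid for every $F\in C_b(\mathcal{T})$ and $s\in\R$. Realizing $\nu$ via Jakubowski--Skorokhod as $\mathcal{L}[\widetilde\u,\widetilde\W]$ on $(\widetilde\Omega,\widetilde{\mathcal{F}},\widetilde{\mathbb{P}})$, endowed with the normal filtration $\{\widetilde{\mathcal{F}}_t\}_{t\in\R}$ generated by $(\widetilde\u(s),\widetilde\W(s))_{s\leq t}$, I would then check the two defining properties of Definition \ref{AWS}. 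Identification of $\widetilde\W$ as an $\{\widetilde{\mathcal{F}}_t\}$-adapted $GG^{\ast}$-Wiener process proceeds through the martingale characterizations
\[
\mathbb{E}[\langle\W(t)-\W(s),\upsilon\rangle\mid\mathcal{F}_s]=0,\qquad \mathbb{E}\bigl[\langle\W(t)-\W(s),\upsilon\rangle^{2}-(t-s)\langle GG^{\ast}\upsilon,\upsilon\rangle\bigm|\mathcal{F}_s\bigr]=0,
\]
which are continuous on $\mathcal{T}$, preserved by each shift $S_t$, hence by convex combinations and by the weak limit. The analytical weak formulation of \eqref{eqn_stochatic_u-SHNSE} likewise passes to $\nu$ because $\kappa>0$ renders the map $(\u,\W)\mapsto\int_s^t\langle\diver(\u\otimes\u)(r),\upsilon\rangle\,\d r$ continuous on $\mathcal{T}$, and the uniform moment bound on $\|\u\|_{C^{\vartheta}}$ supplies the dominating integrability required to pass to the limit along $T_n$. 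Finally, the uniform-in-$t$ bound for $\widetilde\u$ stated in the theorem follows from lower semicontinuity of the norm under weak convergence applied to \eqref{u_EN}.

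The main anticipated obstacle is the Wiener-process identification on the new filtration: after Jakubowski--Skorokhod the original filtration structure is lost, so one must argue through the martingale-problem together with the L\'evy characterization applied to an orthonormal basis diagonalizing $GG^{\ast}$. A subtle but routine second point is ensuring that the stochastic integral in the weak formulation passes to the limit jointly with the quadratic nonlinearity; both are handled because the convergence in $C(\R;C^{\kappa})$ with $\kappa>0$ is strong enough to render every term of Definition \ref{AWS} a continuous functional on $\mathcal{T}$.
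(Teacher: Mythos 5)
Your proposal follows the same Krylov--Bogoliubov blueprint as the paper: form the ergodic averages $\mu_T$, prove tightness on $\mathcal{T}=C(\R;C^{\kappa})\times C(\R;C^{\kappa})$, extract a weak limit, realize it via Jakubowski--Skorokhod, verify shift invariance by the boundary-term estimate, and pass to the limit in the weak formulation. The structure, the choice of $\kappa=\vartheta/2$, and the shift-invariance computation all match.

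The one genuine divergence is in how the temporal equicontinuity needed for tightness is obtained. You propose to derive a uniform-in-$t$ temporal H\"older modulus by ``combining the equation with Proposition \ref{thm_z_alpha},'' i.e.\ reading time regularity off \eqref{eqn_stochatic_u-SHNSE} itself. This would require controlling $\diver(\u\otimes\u)$ and $(-\Delta)^\alpha\u$ uniformly in $t$ with only $\u\in C^{\vartheta}_x$ for a small $\vartheta$, so the resulting modulus would live in a negative-order spatial space, and you would then need a second interpolation step to land in a positive space compatible with the compact embedding into $C^\kappa$. The paper avoids this entirely: it goes back inside the convex integration iteration and reuses \eqref{v_q_C1}, \eqref{v_diff_EN}, together with the $\z_q$ estimates \eqref{z_q_EN2}--\eqref{z_q_EN4}, to interpolate and sum a geometric series directly in the joint H\"older space $C^{\beta'}_t C^{\beta''}_x$ with $\beta'+\beta''<\vartheta$. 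That route produces $\sup_{s\in\R}\mathbb{E}[\|\u(\cdot+s)\|_{C^{\beta'}([-N,N];C^{\beta''})}^{2r}]\lesssim N$ with no appeal to the equation and no negative-regularity detour, and the Arzel\`a--Ascoli set $K_M$ is then immediate. Your version can likely be made to work, but it is noticeably heavier; if you adopt it you must be precise about which negative space hosts the temporal increments and how interpolation against the $C^\vartheta_x$ bound recovers a positive spatial exponent strictly above $\kappa$.

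On the limit-passage: your plan to re-derive the Wiener structure for $\widetilde\W$ through the martingale/L\'evy characterization is sound but more elaborate than what the paper does. The paper simply notes that the set $A\subset\mathcal{T}$ of trajectory pairs satisfying the deterministic weak identity is full under each $\mathcal{L}[S_t(\u,\W)]$, hence under each $\mu_{T_n}$, and that almost sure convergence in $\mathcal{T}$ preserves membership in $A$ because every term of the weak formulation is a continuous functional on $\mathcal{T}$ (here $\kappa>0$ is used for the quadratic term). The Wiener property of the second marginal is then inherited because $\mathcal{L}[S_t\W]$ does not depend on $t$ and the class of $GG^{\ast}$-Wiener laws is closed under weak limits. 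Your martingale-problem route is a valid alternative but is not needed at the level of detail you anticipate.
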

\begin{proof}
	First, we claim that there exists $\beta^{\prime}$, $\beta^{\prime\prime}>0$ such that for any $N\in\N$
	\begin{align*}
		\sup_{s\in\R}\mathbb{E}\left[\|\u(\cdot+s)\|^{2r}_{C^{\beta^{\prime}}([-N,N];C^{\beta^{\prime\prime}}_{x})}\right]\lesssim N.
	\end{align*}
Indeed, reviewing the proof of Theorem \ref{MR-SHNSE}, and by interpolation inequality, we deduce for some $\beta^{\prime}\in(0,\frac{\vartheta}{2})$ and $\beta^{\prime\prime}\in(\frac{\vartheta}{2},\vartheta)$ satisfying $\beta^{\prime}+\beta^{\prime\prime}<\vartheta<\frac57\beta$
	\begin{align*}
	&\sum_{q\geq0}\EN\v_{q+1}-\v_{q}\EN_{C^{\beta^{\prime}}_{t}C^{\beta^{\prime\prime}}_{x},2r} \lesssim
	\sum_{q\geq0}\EN\v_{q+1}-\v_{q}\EN_{C^{\vartheta}_{t,x},2r} 
	\nonumber\\ & \lesssim \sum_{q\geq0} \EN\v_{q+1}-\v_{q}\EN_{C^{0},2r}^{1-\vartheta} \EN\v_{q+1}-\v_{q}\EN_{C^{1}_{t,x},2r}^{\vartheta} 
	 \lesssim \sum_{q\geq0} \delta_{q+1}^{\frac{1-\vartheta}{2}}\lambda_{q+1}^{\frac{7\vartheta}{5}} \delta_{q+1}^{\frac{\vartheta}{2}} 
	  \leq \sqrt{3r}L a^{\frac75b\vartheta}+ \lambda_2^{\beta}\sum_{q\geq1}\lambda_{q+1}^{\frac75\vartheta-\beta}<\infty.
\end{align*}
Hence, we conclude that $\v=\lim\limits_{q\to\infty}\v_{q}$ exists and belongs to the space $\L^{2r}(\Omega;C^{\beta^{\prime}}(\R;C^{\beta^{\prime\prime}}))$. Similarly, using \eqref{z_q_EN2}, \eqref{z_q_EN3}, \eqref{z_q_EN4}, \eqref{z_diff_EN} and interpolation inequality, we deduce for same $\beta^{\prime}\in(0,\frac{\vartheta}{2})$ and $\beta^{\prime\prime}\in(\frac{\vartheta}{2},\vartheta)$ satisfying $\beta^{\prime}+\beta^{\prime\prime}<\vartheta<\frac57\beta$ and any $p\geq2$
	\begin{align*}
	&	\sum_{q\geq0}\EN\z_{q+1}-\z_{q}\EN_{C^{\beta^{\prime}}_{t}C^{\beta^{\prime\prime}}_{x},p}
		\nonumber\\
	& \lesssim \sum_{q\geq0}\EN\z_{q+1}-\z_{q}\EN_{C^{0}_{t,x},p}^{(\frac14-\beta^{\prime})(1-\beta^{\prime\prime})}\EN\z_{q+1}-\z_{q}\EN_{C^{0}_{t}C^{1}_{x},p}^{(\frac14-\beta^{\prime})\beta^{\prime\prime}}\EN\z_{q+1}-\z_{q}\EN_{C^{\frac14}_{t}C^{0}_{x},p}^{\beta^{\prime}(1-\beta^{\prime\prime})}\EN\z_{q+1}-\z_{q}\EN_{C^{\frac14}_{t}C^{1}_{x},p}^{\beta^{\prime}\beta^{\prime\prime}}
	\nonumber\\  & \lesssim \sum_{q\geq0}\lambda_{q}^{-(\frac13-\varepsilon)(\frac14-\beta^{\prime})(1-\beta^{\prime\prime})}\lambda_{q+1}^{\frac13(\frac14-\beta^{\prime})\beta^{\prime\prime}}\lambda_{q+1}^{\frac13\beta^{\prime}(1-\beta^{\prime\prime})}\lambda_{q+1}^{\frac23\beta^{\prime}\beta^{\prime\prime}}
	\nonumber\\  & = \sum_{q\geq0}\lambda_{q}^{-(\frac13-\varepsilon)(\frac14-\beta^{\prime})(1-\beta^{\prime\prime})+\frac23(\beta^{\prime}+\frac14\beta^{\prime\prime})}
  \lesssim \sum_{q\geq0}\lambda_{q}^{-\frac16(\frac14-\beta^{\prime})(1-\beta^{\prime\prime})+\frac23(\beta^{\prime}+\frac14\beta^{\prime\prime})}
	   \lesssim \sum_{q\geq0}\lambda_{q}^{-\frac{5}{6}\left[\frac{1}{20}-(\beta^{\prime}+\frac14\beta^{\prime\prime})\right]}<\infty,
\end{align*}
where we have used  $\varepsilon< \frac{1}{198.375} -\beta$ and  $\beta^{\prime}+\frac{\beta^{\prime\prime}}{4} < \vartheta <\frac{1}{20}$. Hence, we get $\lim\limits_{q\to\infty}\z_{q}=\z$ in $\L^{p}(\Omega;C^{\beta^{\prime}}(\R;C^{\beta^{\prime\prime}}))$, for any $p\geq2$. Then letting $\u=\v+\z$, we get an $\{\mathcal{F}_{t}\}_{t\in\R}$-adapted analytically weak solution $\u$ to \eqref{eqn_stochatic_u-SHNSE} and 
\begin{align*}
	\EN\u\EN_{C^{\beta^{\prime}}_{t}C^{\beta^{\prime\prime}}_{x},2r}<\infty.
\end{align*}
The rest of the proof follows by Arzelà–Ascoli Theorem, Jakubowski–Skorokhod representation theorem and the same argument as in \cite[Theorem 6.1]{Lu+Zhu_Arxiv} and \cite[Theorem 4.1]{Hofmanova+Zhu+Zhu_Arxiv}, therefore we are not repeating here.
\end{proof}

\section{3D stochastic Euler equations}\setcounter{equation}{0}\label{sec7}
In this section, our aim is to prove the first main result for system \eqref{eqn_stochatic_u-SEE}, that is, we will provide the proof of Theorem \ref{FMR2}. The steps of the proof of the main result for system \eqref{eqn_stochatic_u-SEE} is similar to the system \eqref{eqn_stochatic_u-SHNSE}. We shall provide a quick proof based on the calculations given in the previous sections. 

 Let us first break down the stochastic Euler equations \eqref{eqn_stochatic_u-SEE} into two components: a random partial differential equation and a linear component that incorporates noise. In particular, we decompose $\u=\mathfrak{v}+\mathfrak{z}$ such that $\mathfrak{z}$ solves the following stochastic linear system:
\begin{equation}\label{eqn_z}
	\left\{
	\begin{aligned}
		\d\mathfrak{z}  + \mathfrak{z}\d t &=\d \mathrm{W},\\
		\mathrm{div}\;\mathfrak{z}&=0,
		%,\\		\z(0)&=\u_0,
	\end{aligned}
	\right.
\end{equation}
where $\W$ is same as given in Definition \ref{AWS}, and $\mathfrak{v}$ solves the following non-linear random system:
\begin{equation}\label{eqn_random_w2}
	\left\{
	\begin{aligned}
		\partial_t\mathfrak{v}   +\mathrm{div}\left((\mathfrak{v}+\mathfrak{z}) \otimes (\mathfrak{v}+\mathfrak{z}) \right)  -\mathfrak{z} +\nabla \pi &=\boldsymbol{0},\\
		\mathrm{div}\;\mathfrak{v}&=0.
		%,\\		\v(0)&=\boldsymbol{0}.
	\end{aligned}
	\right.
\end{equation}
  The pressure term associated with $\mathfrak{v}$ is denoted by $\pi$, and $\mathfrak{z}$ is divergence-free based on the assumptions on the noise $\W$.  Let us now provide the regularity results for $\mathfrak{z}$ in the following Proposition:

\begin{proposition}[{\cite[Proposition 3.1]{Lu+Zhu_Arxiv}}]\label{thm_z}
	Suppose that  $\Tr\left((-\Delta)^{\frac52}GG^{\ast}\right)<\infty$. Then for any $\delta\in(0,\frac12)$, $p\geq2$
	\begin{align*}%\label{Y-regularity}
		\sup_{t\in\R}	\mathbb{E}\left[\|\mathfrak{z}\|^p_{C^{\frac12-\delta}_{t}C^{0}}\right]\lesssim \sup_{t\in\R}	\mathbb{E}\left[\|\mathfrak{z}\|^p_{C^{\frac12-\delta}_{t}\H^{\frac52}}\right] \leq (p-1)^{\frac{p}{2}}L^{p},
	\end{align*}
	where $L\geq \max\left\{1 , \frac{1+\bar{M}}{\sqrt{6r}}\right\}$
 depends on $\Tr\left((-\Delta)^{\frac52}GG^{\ast}\right)$, $\delta$ and is independent of $p$.
\end{proposition}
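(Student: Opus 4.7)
The plan is to exploit that $\mathfrak{z}$ is an Ornstein--Uhlenbeck type process whose stationary solution admits the explicit representation
\begin{align*}
\mathfrak{z}(t) = \int_{-\infty}^{t} e^{-(t-s)}\,\d \W(s),
\end{align*}
so that $\mathfrak{z}(t)$ is a Gaussian random variable valued in a Hilbert space of divergence-free, spatial mean zero vector fields. The first inequality is immediate from the Sobolev embedding $H^{\frac{3}{2}+\sigma}(\mathbb{T}^3)\hookrightarrow C^{\sigma}(\mathbb{T}^3)$ (applied pathwise and followed by taking expectations), so the entire burden is on the second inequality, i.e.\ the bound in $C^{\frac12-\delta}_t H^{\frac32+\sigma}$.

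For the spatial bound at a fixed time, the It\^o isometry gives
\begin{align*}
\mathbb{E}\|\mathfrak{z}(t)\|^2_{H^{\frac32+\sigma}} \;=\; \int_{-\infty}^{t} e^{-2(t-s)}\,\Tr\!\left((-\Delta)^{\frac32+\sigma}GG^{\ast}\right)\d s,
\end{align*}
which is finite and independent of $t$ by the hypothesis. To upgrade this to a space-time H\"older estimate on $[t,t+1]$, the plan is to use the Da Prato--Zabczyk factorization method: choose $\alpha\in(\delta,\tfrac12)$ and write
\begin{align*}
\mathfrak{z}(t) = \frac{\sin\pi\alpha}{\pi}\int_{-\infty}^{t}(t-s)^{\alpha-1}e^{-(t-s)}\mathrm{Y}(s)\,\d s, \quad \mathrm{Y}(s):=\int_{-\infty}^{s}(s-r)^{-\alpha}e^{-(s-r)}\,\d\W(r).
\end{align*}
A direct It\^o-isometry computation bounds $\mathbb{E}\|\mathrm{Y}(s)\|^{2}_{H^{\frac32+\sigma}}$ uniformly in $s$ provided $\alpha<\tfrac12$, and then the deterministic ``factorization lemma'' (cf.\ \cite[Thm.~5.14]{DaZ}) converts integrability of $\mathrm{Y}$ over $[t-1,t+1]$ into $C^{\frac12-\delta}([t,t+1];H^{\frac32+\sigma})$-regularity of $\mathfrak{z}$. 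Stationarity of the increments of $\W$ (two-sided Wiener process) makes all estimates translation invariant in $t$, yielding the supremum over $t\in\R$ for free.

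The subtle point and the place where I would spend the most care is extracting the sharp Gaussian constant $(p-1)^{p/2}$ rather than the generic Burkholder--Davis--Gundy constant (which grows like $p^{p/2}$ or worse and would destroy the large-$p$ iteration used later). The key observation is that $\mathrm{Y}(s)$ is an $H^{\frac32+\sigma}$-valued \emph{Gaussian} random variable; by the Kahane--Khintchine inequality in Hilbert space (equivalently, Gaussian hypercontractivity, or the explicit moments of a Gaussian norm),
\begin{align*}
\left(\mathbb{E}\|\mathrm{Y}(s)\|^{p}_{H^{\frac32+\sigma}}\right)^{1/p} \leq \sqrt{p-1}\,\left(\mathbb{E}\|\mathrm{Y}(s)\|^{2}_{H^{\frac32+\sigma}}\right)^{1/2}.
\end{align*}
Plugging this into the factorization identity via Minkowski's inequality and absorbing all other (finite, $p$-independent) constants into $L$ produces exactly the stated bound $(p-1)^{p/2}L^{p}$. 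The main obstacle is therefore not analytic but bookkeeping: one must apply the Gaussian moment equivalence at the level of $\mathrm{Y}$ rather than at the level of $\mathfrak{z}$, since only then does the constant factor through the deterministic convolution without picking up an extra $p$-dependence.
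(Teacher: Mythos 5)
Your proposal is sound in outline but takes a genuinely different route from the paper's own proof of the analogous statement (Proposition~\ref{thm_z_alpha}, proved in Appendix~\ref{A.1}). You use the Da Prato--Zabczyk factorization identity and apply the Gaussian moment equivalence to the process $\mathrm{Y}(s)$, then invoke the deterministic factorization lemma. The paper instead estimates the second moment of the increments $\mathbb{E}\big[\|\z(t)-\z(s)\|^2_{\H^{\frac32+\sigma}}\big]$ directly (splitting into an integral over $[s,t]$ and a difference over $(-\infty,s]$), applies the Gaussian moment bound to the increment $\z(t)-\z(s)$, and then invokes Kolmogorov's continuity criterion plus the Sobolev embedding. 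Both are standard; yours buys you a somewhat cleaner place to insert the $\sqrt{p-1}$ Gaussian constant, since it factors through the deterministic convolution, whereas the paper's route requires one to argue (implicitly) that the Kolmogorov constant does not reintroduce $p$-dependence, which for Gaussian processes ultimately goes through the same Banach-valued Gaussian moment bound applied to the full $C^{\frac12-\delta}_t\H^{\frac32+\sigma}$-norm. Two details you should tighten: first, the factorization lemma gives $C^{\alpha-1/p}$ regularity from $L^p$-in-time bounds on $\mathrm{Y}$ and needs $\alpha>1/p$ alongside $\alpha<\tfrac12$ (for $\mathbb{E}\|\mathrm{Y}(s)\|^2<\infty$); targeting $C^{\frac12-\delta}_t$ thus forces $1/p<\delta$, so the estimate is obtained directly only for $p>1/\delta$, and smaller $p$ must be handled by a H\"older/Jensen reduction with the extra $\delta$-dependent factor absorbed into $L$. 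Second, the factorization representation with the lower limit $-\infty$ should be split at, say, $t-1$, with the far part treated as a smooth (in fact analytic) contribution, before the deterministic factorization lemma (which is stated on finite intervals) is applied.
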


\subsection{Iterative proposition}
Now, we apply the convex integration method to the nonlinear equation \eqref{eqn_random_w2}, see Sections \ref{sec3}, \ref{sec4} and \ref{sec5} for more details. The convex integration iteration is indexed by a parameter $q\in\N_0$. In this section, we consider $\lambda_{q}$, $\delta_{q}$ and $\ell$ same as in the previous sections. At each step $q$, a pair $(\mathfrak{v}_{q}, \mathring{N}_{q})$ is constructing solving the following system:
\begin{equation}\label{eqn_w_q}
	\left\{
	\begin{aligned}
		\partial_t\mathfrak{v}_q  +\mathrm{div}\left((\mathfrak{v}_q+\mathfrak{z}_q) \otimes (\mathfrak{v}_q+\mathfrak{z}_q) \right) - \mathfrak{z}_q +\nabla \pi_q &=\diver \mathring{N}_q \\
		\mathrm{div}\;\mathfrak{v}_q&=0.
		%	\v_q(0)&=\boldsymbol{0}.
	\end{aligned}
	\right.
\end{equation}
In the above, we define
\begin{align*}
	\mathfrak{z}_q(t,x) : =\chi_q\left(\|\widetilde{\mathfrak{z}}_q(t)\|_{C^0_{x}}\right)\wi\chi_q\left(\|\widetilde{\mathfrak{z}}_q(t)\|_{C^1_{x}}\right)\widetilde{\mathfrak{z}}_q(t,x),
\end{align*}
and $\widetilde{\mathfrak{z}}_q=\mathbb{P}_{\leq f(q)}\mathfrak{z}$, where $\chi_q$, $\wi\chi_{q}$ and $\mathbb{P}_{\leq f(q)}$ are same as given in Subsection \ref{Itra+Pro}. $\mathring{N}_{q}$ on the right hand side of \eqref{eqn_w_q} is a $3\times3$ matrix which is trace-free and we put he trace part into the pressure. Note that $\mathfrak{z}_q$ satisfies the similar estimates as $\z_{q}$, for example $\|\mathfrak{z}_q\|_{C^0_{t,x}} \leq \frac12\lambda_q^{\frac13}$, $\|\mathfrak{z}_q\|_{C^0_tC^1_{x}} \leq \lambda_{q}^{\frac23}$, $\EN \mathfrak{z}_q\EN_{C^0,p}  \leq (p-1)^{\frac12}L$, etc. (see Remarks \ref{Rem3.2} and \ref{Rem3.3}).    One can obtain the following lemma from Lemma \ref{Lemma-z_diff_EN}.
\begin{lemma}\label{Lemma-y_diff_EN}
	Suppose that  $\Tr\left((-\Delta)^{\frac52}GG^{\ast}\right)$.  For any $p\geq2$, we have for $0<\varepsilon<\frac13$,
	\begin{align*}%\label{y_diff_EN}
		\EN\mathfrak{z}_{q+1}-\mathfrak{z}_{q}\EN_{C^0,p}\lesssim 
		pL^2\lambda_{q}^{-(\frac13-\varepsilon)}.
	\end{align*}
\end{lemma}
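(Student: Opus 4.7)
The plan is to mirror the proof of Lemma \ref{Lemma-z_diff_EN} given in Appendix \ref{A.2}, since the argument there is insensitive to the fractional dissipation: what matters is only the Gaussian-type moment bound on the linearized process, which for $\mathfrak{z}$ is supplied by Proposition \ref{thm_z}. I would begin by decomposing
\begin{align*}
\mathfrak{z}_{q+1}-\mathfrak{z}_q \;=\; X_{q+1}(\widetilde{\mathfrak{z}}_{q+1}-\widetilde{\mathfrak{z}}_q)+(X_{q+1}-X_q)\,\widetilde{\mathfrak{z}}_q,
\end{align*}
where $X_q := \chi_q(\|\widetilde{\mathfrak{z}}_q\|_{C^0_x})\,\wi\chi_q(\|\widetilde{\mathfrak{z}}_q\|_{C^1_x}) \in [0,1]$ is the scalar cutoff factor. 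The two summands isolate the frequency-projection increment and the cutoff increment, respectively.

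For the first summand, the trivial bound $|X_{q+1}|\leq 1$ reduces the estimate to a control on $\widetilde{\mathfrak{z}}_{q+1}-\widetilde{\mathfrak{z}}_q = (\mathbb{P}_{\leq f(q+1)}-\mathbb{P}_{\leq f(q)})\mathfrak{z}$, whose Fourier support lies in the annulus $|k|\in(f(q),f(q+1)]$. A Bernstein estimate on this annulus, followed by the Sobolev embedding $H^{\frac32+\kappa}\hookrightarrow C^0$ for small $\kappa>0$ and Proposition \ref{thm_z}, yields
\begin{align*}
\EN\widetilde{\mathfrak{z}}_{q+1}-\widetilde{\mathfrak{z}}_q\EN_{C^0,p}
\lesssim f(q)^{-(\sigma-\kappa)}\EN\mathfrak{z}\EN_{H^{\frac32+\sigma},p}
\lesssim (p-1)^{\frac12}L\,\lambda_q^{-(\sigma-\kappa)/3}.
\end{align*}
Since $\sigma\geq 1$ and $\kappa$ is a free small parameter, the exponent $(\sigma-\kappa)/3$ can be made at least $\frac13-\varepsilon$, which is the target rate.

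The cutoff summand is the technical piece. By construction, $X_{q+1}-X_q=0$ off the event $E$ on which at least one of the four plateau conditions defining $X_q$ and $X_{q+1}$ is violated. Using the trivial pointwise bound $|X_{q+1}-X_q|\leq 2$ together with H\"older's inequality in $L^p(\Omega)$, one estimates
\begin{align*}
\EN(X_{q+1}-X_q)\,\widetilde{\mathfrak{z}}_q\EN_{C^0,p}
\leq 2\,\mathbb{P}(E)^{1/(2p)}\EN\widetilde{\mathfrak{z}}_q\EN_{C^0,2p}.
\end{align*}
Markov's inequality applied with a sufficiently large moment $m$, combined with the bound $(m-1)^{m/2}L^m$ from Proposition \ref{thm_z}, shows that $\mathbb{P}(E)$ decays faster than any polynomial in $\lambda_q$: this is precisely because the plateau thresholds $\lambda_q^{1/3-\varepsilon}$ and $\lambda_q^{2/3-\varepsilon}$ in \eqref{cutoff} overshoot the typical size of $\|\widetilde{\mathfrak{z}}_q\|_{C^0_x}$ and $\|\widetilde{\mathfrak{z}}_q\|_{C^1_x}$ by a positive power of $\lambda_q$. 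Hence this contribution is dominated by the first one.

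Assembling the two pieces produces $\EN\mathfrak{z}_{q+1}-\mathfrak{z}_q\EN_{C^0,p}\lesssim pL^2\lambda_q^{-(1/3-\varepsilon)}$, with the slack between the target factor $pL^2$ and the sharper $(p-1)^{1/2}L$ obtained from the frequency-annulus step easily absorbing any constants introduced in the H\"older step. The main potential obstacle is the bookkeeping required to choose the Markov exponent $m$ large enough (depending on $p$ and $\varepsilon$) that the cutoff contribution remains of lower order than $\lambda_q^{-(1/3-\varepsilon)}$, without producing a constant worse than $pL^2$; since the Markov decay can be made arbitrarily fast, this is a quantitative rather than conceptual difficulty and the argument for Lemma \ref{Lemma-z_diff_EN} transfers without substantial modification.
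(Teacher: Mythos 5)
Your argument is essentially the paper's: the paper proves Lemma~\ref{Lemma-z_diff_EN} in Appendix~\ref{A.2} by decomposing $\z_{q+1}-\z_q$ into a frequency-annulus increment (bounded via a Cauchy--Schwarz/Sobolev estimate on the Fourier shell $f(q)<|k|\leq f(q+1)$, using $\sigma\geq 1$) plus cutoff increments (bounded via indicator functions and Chebyshev), and the paper's own treatment of Lemma~\ref{Lemma-y_diff_EN} is a one-line reference back to that proof. Your only cosmetic difference is keeping the cutoff increment $X_{q+1}-X_q$ as a single object rather than splitting it once more into a $\chi$-increment times $\wi\chi_q$ and a $\chi_{q+1}$ times $\wi\chi$-increment, as the paper does.

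One technical caution worth stating precisely, since you flag it yourself as the ``main potential obstacle'': the Markov exponent is not a free parameter to be taken arbitrarily large. The moment bound from Proposition~\ref{thm_z} degrades like $(m-1)^{m/2}L^m$, so ``decays faster than any polynomial'' is not a usable statement once you track the constant. The paper's choice is to apply Chebyshev with exponent exactly $2p$, matched to the H\"older split $\mathbb{E}[|(X_{q+1}-X_q)\widetilde{\mathfrak{z}}_q|^p]^{1/p}\leq \mathbb{P}(E)^{1/(2p)}\,\EN\widetilde{\mathfrak{z}}_q\EN_{C^0,2p}$, giving $\mathbb{P}(E)^{1/(2p)}\lesssim\sqrt{p}\,L\,\lambda_q^{-(\frac13-\varepsilon)}$ and $\EN\widetilde{\mathfrak{z}}_q\EN_{C^0,2p}\lesssim\sqrt{p}\,L$, hence the stated constant $pL^2$. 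At that exponent the probability decays as $\lambda_q^{-2p(\frac13-\varepsilon)}$ --- exactly the rate needed for the $p$-th root, and no faster.
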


Under the above assumptions, we present the following iteration proposition:

\begin{proposition}\label{Iterations-SEE}
	Suppose that  $\Tr\left((-\Delta)^{\frac52}GG^{\ast}\right)$ and let $r>1$ be fixed. Then, for any $\beta\in(0,\frac{1}{200})$, there exists a choice of parameter $a$ such that following holds true:
	
	Let $(\mathfrak{v}_{q},\mathring{N}_{q})$ for some $q\in\N_0$ be an $\{\mathcal{F}_{t}\}_{t\in\R}$-adapted solution to the system \eqref{eqn_w_q} satisfying 
	\begin{align}
		\|\mathfrak{v}_{q}\|_{C^0_{t,x}} &\leq \lambda_{q}^{\frac13}, \label{w_q_C0}\\
		\|\mathfrak{v}_{q}\|_{C^1_{t,x}} & \leq \lambda_{q}^{\frac75}\delta_{q}^{\frac12}, \label{w_q_C1}\\
		 \EN\mathfrak{v}_{q}\EN_{C^0,2r} &  \leq 6rL^2 -\delta_{q}^{\frac12},  \label{w_q_EN}\\
		\|\mathring{N}_q\|_{C^0_{t,x}} & \leq \lambda_{q}^{\frac23}, \label{N_q_C0}\\
		\EN \mathring{N}_{q}\EN_{C^0,r} &\leq \delta_{q+1}   \label{N_q_EN}.
	\end{align}
	There exists an {$\{\mathcal{F}_{t}\}_{t\in\R}$}-adapted process $(\mathfrak{v}_{q+1},\mathring{N}_{q+1})$ which solves the system \eqref{eqn_w_q}, obeys \eqref{w_q_C0}-\eqref{N_q_EN} at the level $q+1$ and satisfies 
	\begin{align}\label{w_diff_EN}
		\EN\mathfrak{v}_{q+1}-\mathfrak{v}_{q}\EN_{C^0,2r} \leq \bar{M}  \delta_{q+1}^{\frac12},
	\end{align}
	where $\bar{M}$ is same as in \eqref{v_diff_EN}.
\end{proposition}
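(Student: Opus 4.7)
The plan is to import the convex integration machinery of Sections \ref{sec4} and \ref{sec5} wholesale, exploiting the fact that \eqref{eqn_w_q} is the $\nu=0$ specialization of \eqref{eqn_v_q}. First I would decompose $\mathfrak{v}_{q+1}=\mathfrak{v}_{\ell}+\omega_{q+1}$, where $\mathfrak{v}_{\ell}$ is obtained from $\mathfrak{v}_q$ by convolving with one-sided mollifiers at scale $\ell=\lambda_q^{-8/5}$ to preserve $\{\mathcal{F}_t\}_{t\in\R}$-adaptedness, and similarly define $\mathring{N}_{\ell}$ and $\mathfrak{z}_{\ell}$. The perturbation $\omega_{q+1}=\omega_{q+1}^{(p)}+\omega_{q+1}^{(c)}$ is assembled via Beltrami waves as in \eqref{wp}--\eqref{w_q+1_Curl}, with the amplitude $a_{\zeta}$ built from $\varrho=\sqrt{\ell^2+(|\mathring{N}_{\ell}|+\delta_{q+1})^2}$, the temporal cutoffs $\eta_{j,k}$ on intervals of length $m_q$, and the flow maps $\Phi_{k,j}$ arising from \eqref{Phi_kj} with drift $\mathfrak{v}_{\ell}+\mathfrak{z}_{\ell}$. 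The pointwise bounds \eqref{Phi1}--\eqref{Phi2} continue to hold because their proof uses only the $C^0$ and $C^1$ estimates of the drift, and \eqref{w_q_C1} together with the analogue of \eqref{z_q_C0} for $\mathfrak{z}_q$ provide exactly those ingredients.

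The pointwise amplitude estimate in Proposition \ref{5.1} transfers verbatim with $\mathring{R}_{\ell}$ replaced by $\mathring{N}_{\ell}$, and consequently the inductive bounds \eqref{w_q_C0}, \eqref{w_q_C1}, \eqref{w_q_EN} together with \eqref{w_diff_EN} follow by repeating the calculations of Subsection \ref{v_q+1}, none of which invoke the dissipation. For the new stress, subtracting the mollified equation at level $q$ from \eqref{eqn_w_q} at level $q+1$ and introducing the inverse divergence $\mathcal{R}$ produces the decomposition $\mathring{N}_{q+1}=N_{trans}+N_{Nash}+N_{osc}+N_{corr}+N_{com1}+N_{com2}$, which differs from \eqref{R_q+1-split} only by the absence of the viscous term $\mathcal{R}[\nu(-\Delta)^{\alpha}\omega_{q+1}]$. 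The pathwise and expectation bounds on each of these six terms proceed exactly as in Subsection \ref{R_q+1}, with the choice of auxiliary parameters $0<\varpi<\frac{1}{5}-3\beta$ and $m>9$, both of which are admissible under $\beta<\frac{1}{24}$.

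The substantive observation, and the reason the range of admissible $\beta$ widens relative to the hypodissipative case, is that the estimate of $R_{lin}$ in Subsection \ref{R_q+1} is the \emph{only} place where the restriction $\beta<\frac{1-2\alpha}{3}$ enters (through the requirement $\varpi<1-2\alpha-3\beta$ needed to dominate $\lambda_{q+1}^{2\alpha+\varpi-1}$). Eliminating $N_{lin}$ removes that constraint entirely, leaving $\beta<\frac{1}{24}$ as the only binding condition, exactly as in the statement. I do not anticipate any genuinely new technical obstacle: the point is bookkeeping and verifying that each estimate in Sections \ref{sec4}--\ref{sec5} survives the substitution $(\nu,\mathring{R}_q,\z_q,\v_q)\mapsto(0,\mathring{N}_q,\mathfrak{z}_q,\mathfrak{v}_q)$, which it does term by term.
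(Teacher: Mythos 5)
Your proposal follows the paper's proof essentially verbatim: mollify at scale $\ell=\lambda_q^{-8/5}$, build $\omega_{q+1}$ from the same Beltrami-wave construction with amplitude $\varrho=\sqrt{\ell^2+(|\mathring N_\ell|+\delta_{q+1})^2}$, decompose $\mathring N_{q+1}$ into the six errors $N_{trans},N_{Nash},N_{osc},N_{corr},N_{com1},N_{com2}$, and estimate each by the computations of Sections \ref{sec4}--\ref{sec5} with $(\nu,\mathring R_q,\z_q,\v_q)\mapsto(0,\mathring N_q,\mathfrak z_q,\mathfrak v_q)$. Your identification of $R_{lin}$ as the sole source of the constraint $\varpi<1-2\alpha-3\beta$ (hence of $\beta<\tfrac{1-2\alpha}{3}$) is correct and is exactly why the admissible range relaxes to $\beta<\tfrac{1}{24}$ once $\nu=0$.
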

\begin{proof}%[Proof of Proposition \ref{Iterations-SEE}]
	The iteration starts at 
$%	\begin{align*}
		\mathfrak{v}_{0}=\boldsymbol{0}, \;\;  \mathring{N}_{0}=-\mathcal{R}\mathfrak{z}_0+\mathfrak{z}_0  \mathring{\otimes} \mathfrak{z}_0,
$ %	\end{align*}
	where $\mathcal{R}$ denotes the reverse-divergence operator. It implies that \eqref{w_q_C0}-\eqref{w_q_EN} are satisfied at the level $q=0$ immediately. From the definition of $\mathfrak{z}_{q}$, we have 
	\begin{align*}
		\|\mathring{N}_{0}\|_{C^0_{t,x}} & \leq \|\mathfrak{z}_{0}\|^2_{C^0_{t,x}} + \|\mathfrak{z}_{0}\|_{C^0_{t,x}}  \leq \frac14\lambda^{\frac23}_{0} + \frac12\lambda^{\frac13}_{0}  \leq {\lambda^{\frac23}_{0}},
	\end{align*}
	and, we also obtain
	\begin{align*}
		\EN \mathring{N}_{0}\EN_{C^0,r} & \leq \EN\mathfrak{z}_{0}\EN^2_{C^0,2r} + \EN\mathfrak{z}_{0}\EN_{C^0,r} 
		\leq 2rL^2+rL\leq \delta_{1}.
	\end{align*}
	Hence \eqref{N_q_C0} and \eqref{N_q_EN} are satisfied at the level $q=0$. Let us now assume that there exists a pair $(\mathfrak{v}_{q},\mathring{N}_{q})$ which satisfies \eqref{w_q_C0}-\eqref{N_q_EN}. Our next aim is to show the existence of a pair $(\mathfrak{v}_{q+1},\mathring{N}_{q+1})$ which satisfies \eqref{w_q_C0}-\eqref{N_q_EN} at level $q+1$. 
	\vskip 2mm
	\noindent
	\textit{Mollification:} 
	Let us define a mollification of $\mathfrak{v}_{q},\mathring{N}_{q}$ and $\mathfrak{z}_q$ in space and time by convolution as follows:
	\begin{align*}
		\mathfrak{v}_\ell:=(\mathfrak{v}_{q}\ast_{x}\uppsi_{\ell})\ast_{t}\psi_{\ell}, \;\;\;   \mathring{N}_\ell&:=( \mathring{N}_{q}\ast_{x}\uppsi_{\ell})\ast_{t}\psi_{\ell},  \;\;\;   \text{ and }\;\;\; \mathfrak{z}_\ell :=(\mathfrak{z}_q\ast_{x}\uppsi_{\ell})\ast_{t}\psi_{\ell},
	\end{align*}
	where $\uppsi_{\ell}$ and 	$\psi_{\ell}$ are given by \eqref{Mol}.	By the definition, it is immediate to see that $\mathfrak{v}_{\ell}, {N}_{\ell}$ and $\mathfrak{z}_{\ell}$ are $\{\mathcal{F}_t\}_{t\in\R}$-adapted. From system \eqref{eqn_w_q}, we find that $(\mathfrak{v}_{\ell},\mathring{N}_{\ell})$ satisfy the following system:
	\begin{equation}\label{eqn_random_w_l}
		\left\{
		\begin{aligned}
			\partial_t\mathfrak{v}_\ell +\mathrm{div}\left((\mathfrak{v}_\ell+\mathfrak{z}_\ell) \otimes (\mathfrak{v}_\ell+\mathfrak{z}_\ell) \right) +\nabla \pi_\ell -\mathfrak{v}_{\ell} &= \diver ( {N}_{\ell}+N_{com1}),\\
			\mathrm{div}\;\mathfrak{v}_\ell&=0,
			%\\ \v_{\ell}(0)&=\boldsymbol{0},
		\end{aligned}
		\right.
	\end{equation}
	where the commutator stress
	\begin{align}\label{N_com1}
		N_{com1}:= (\mathfrak{v}_\ell+\mathfrak{z}_\ell) \mathring{\otimes} (\mathfrak{v}_\ell+\mathfrak{z}_\ell) - \left(\left((\mathfrak{v}_q+\mathfrak{z}_q) \mathring{\otimes} (\mathfrak{v}_q+\mathfrak{z}_q)\right)\ast_{x}\uppsi_{\ell}\right)\ast_{t}\psi_{\ell},
	\end{align}
	and 
	\begin{align}\label{pi_l}
		\pi_{\ell} :=	(\pi_q\ast_{x}\uppsi_{\ell})\ast_{t}\psi_{\ell} -\frac13\left(|\mathfrak{v}_\ell+\mathfrak{z}_\ell|^2- (|\mathfrak{v}_q+\mathfrak{z}_q|^2\ast_{x}\uppsi_{\ell})\ast_{t}\psi_{\ell} \right).
	\end{align}
	\vskip 2mm
	\noindent
	\textit{New velocity iteration $\mathfrak{v}_{q+1}$:} Let us define 
	\begin{align*}
		\mathfrak{v}_{q+1}:=\mathfrak{v}_{\ell}+\omega_{q+1}=\mathfrak{v}_{q}-(\mathfrak{v}_{q}-\mathfrak{v}_{\ell})+ \omega_{q+1}^{(p)} + \omega_{q+1}^{(c)},
	\end{align*}
	where $\omega_{q+1}$ is given by \eqref{w_q+1_Curl}. It follows by the similar arguments as for \eqref{v_q+1_C0}, \eqref{v_q+1_C1} and \eqref{v_q+1_EN} that 
	\begin{align*}
		\|\mathfrak{v}_{q+1}\|_{C^0_{t,x}} \leq \lambda_{q+1}^{\frac13},\;\;\;	\|\mathfrak{v}_{q+1}\|_{C^1_{t,x}}  \leq \lambda_{q+1}^{\frac75}\delta_{q+1}^{\frac12}, \;\;\; \text{ and }\;\;\;	\EN\mathfrak{v}_{q+1}\EN_{C^0,2r} \leq 1 -\delta_{q+1}^{\frac12}, %\label{w_q+1}
	\end{align*}
	for  $0<\beta<\frac{1}{200}$.
	\vskip 2mm
	\noindent
	\textit{New Reynolds stress $\mathring{N}_{q+1}$:} We infer from \eqref{eqn_random_w_l} and the system \eqref{eqn_w_q} at the level $q+1$ that
	\begin{align*}
		& \diver \mathring{N}_{q+1} - \nabla \pi_{q+1} 
		\nonumber\\& =   \underbrace{(\partial_t+(\mathfrak{v}_{\ell}+\mathfrak{z}_{\ell})\cdot\nabla)\omega^{(p)}_{q+1}}_{\diver N_{trans}} + \underbrace{(\omega_{q+1}\cdot\nabla)(\mathfrak{v}_{\ell}+\mathfrak{z}_{\ell})}_{\diver N_{Nash}}
		+\underbrace{\diver\big(\omega_{q+1}^{(p)}\otimes\omega_{q+1}^{(p)}+\mathring{N}_{\ell}\big)}_{\diver N_{osc}+\nabla \pi_{osc}}
		\nonumber\\& \quad + \underbrace{(\partial_t+(\mathfrak{v}_{\ell}+\mathfrak{z}_{\ell})\cdot\nabla)\omega^{(c)}_{q+1} + \diver (\omega_{q+1}^{(c)}\otimes\omega_{q+1}+\omega_{q+1}^{(p)}\otimes\omega_{q+1}^{(c)})}_{\diver N_{corr}+\nabla \pi_{corr}}
		\nonumber\\& \quad + \underbrace{\diver \big( \mathfrak{v}_{q+1}\otimes(\mathfrak{z}_{q+1}-\mathfrak{z}_{\ell})   + (\mathfrak{z}_{q+1}-\mathfrak{z}_{\ell})\otimes \mathfrak{v}_{q+1} + \mathfrak{z}_{q+1}\otimes\mathfrak{z}_{q+1} -\mathfrak{z}_{\ell}\otimes \mathfrak{z}_{\ell} +(\mathfrak{z}_{q+1}-\mathfrak{z}_{\ell})}_{\diver N_{com2}+\nabla \pi_{com2}} 
		\nonumber\\& \quad + \diver N_{com1} -\nabla \pi_{\ell}.
	\end{align*}
	Here $N_{com1}$ and $\pi_{\ell}$ are as defined in \eqref{N_com1} and \eqref{pi_l}, respectively, and by using the operator $\mathcal{R}$ discussed in Subsection \ref{IDO-R}, we define
	\begin{align*}
		N_{trans}&:= \mathcal{R}\bigg[(\partial_t+(\mathfrak{v}_{\ell}+\mathfrak{z}_{\ell})\cdot\nabla)\omega^{(p)}_{q+1}\bigg], \\
		N_{Nash} &:= \mathcal{R}\left[(\omega_{q+1}\cdot\nabla)(\mathfrak{v}_{\ell}+\mathfrak{z}_{\ell})\right], \\ 
		N_{osc}&:=  \sum_{j,j', \zeta+\zeta'\neq0} \mathcal{R} \bigg[\biggl\{\Wb_{\zeta}\otimes\Wb_{\zeta'}-\frac{\Wb_{\zeta}\cdot\Wb_{\zeta'}}{2} \Id \biggr\} \nabla\left(a_{\zeta}a_{\zeta'} \phi_{\zeta}\phi_{\zeta'}\right)\bigg],\\
		N_{corr}& :=  \mathcal{R}\bigg[(\partial_t+(\mathfrak{v}_{\ell}+\mathfrak{z}_{\ell})\cdot\nabla)\omega^{(c)}_{q+1}\bigg] + \omega_{q+1}^{(c)}\mathring{\otimes}\omega_{q+1}+\omega_{q+1}^{(p)}\mathring{\otimes}\omega_{q+1}^{(c)}, \\
		N_{com2}& :=   \mathfrak{v}_{q+1}\mathring{\otimes}(\mathfrak{z}_{q+1}-\mathfrak{z}_{\ell})   + (\mathfrak{z}_{q+1}-\mathfrak{z}_{\ell})\mathring{\otimes} \mathfrak{v}_{q+1} + \mathfrak{z}_{q+1}\mathring{\otimes}\mathfrak{z}_{q+1}
		- \z_{\ell}\mathring{\otimes}\z_{\ell}  +\mathcal{R}(\z_{q+1}-\z_{\ell}), \\
		\pi_{osc} &:=  c_{\ast}^{-1}\varrho + \frac12  \sum_{j,j', \zeta+\zeta'\neq0}a_{\zeta}a_{\zeta'} \phi_{\zeta}\phi_{\zeta'}( \Wb_{\zeta}\cdot\Wb_{\zeta'}), \\
		\pi_{corr}& :=  \frac{1}{3}\left(2\omega_{q+1}^{(p)}\cdot\omega_{q+1}^{(c)} +\left|\omega_{q+1}^{(c)}\right|^2\right), \\
		\pi_{com2}& := \frac13\big(\mathfrak{v}_{q+1}\cdot(\mathfrak{z}_{q+1}-\mathfrak{z}_{\ell})   + (\mathfrak{z}_{q+1}-\mathfrak{z}_{\ell})\cdot \mathfrak{z}_{q+1} + |\mathfrak{z}_{q+1}|^2-|\mathfrak{z}_{\ell}|^2)\big).
	\end{align*}
	Finally, we have 
	\begin{align*}%\label{N_q+1-split}
		\mathring{N}_{q+1} =   N_{trans}+N_{Nash}+N_{osc}+N_{corr}+N_{com1}+N_{com2},
	\end{align*}
	and 
	\begin{align*}
		\pi_{q+1}=\pi_{\ell} - \pi_{osc} - \pi_{corr} - \pi_{com2}.
	\end{align*}
	We infer by the similar arguments as for \eqref{R_tra_C0}-\eqref{R_tra_EN}, \eqref{R_Nash_C0}-\eqref{R_Nash_EN}, \eqref{R_osc_C0}-\eqref{R_osc_EN}, \eqref{R_corrr_C0}-\eqref{R_corrr_EN}, \eqref{R_com1_C0}-\eqref{R_com1_EN} and \eqref{R_com2_C0}-\eqref{R_com2_EN} that 
	\begin{align*}
		\|\mathring{N}_{q+1}\|_{C^0_{t,x}}  \leq \lambda_{q+1}^{\frac23} \;\;\; \text{ and }\;\;\;	\EN \mathring{N}_{q+1}\EN_{C^0,r} \leq \delta_{q+2},
	\end{align*}
	for $0<\beta<\frac{1}{200}$. This completes the proof.
\end{proof}

The following proposition can be proved by following the similar arguments used in the proof of Proposition \ref{BIP}.
\begin{proposition}[Bifurcating inductive proposition]\label{BIP-SEE}
	Let $(\mathfrak{v}_{q},\mathring{N}_{q})$ be as in the statement of Proposition \ref{Iterations-SEE}. For any interval $\mathcal{I}\subset \R$ with $|\mathcal{I}|\geq 3m_q$ (where $m_q$ is defined in \eqref{m_q}), we can produce a first pair $(\mathfrak{v}_{q+1},\mathring{N}_{q+1})$ and a second pair $(\widetilde{\mathfrak{v}}_{q+1},\widetilde{\mathring{N}}_{q+1})$ which share the same initial data, satisfy the same conclusions of Proposition \ref{Iterations-SEE} and additionally
	\begin{align*}
		\EN\mathfrak{v}_{q+1}-\widetilde{\mathfrak{v}}_{q+1}\EN_{\L^{2}_{x},2} \geq \delta_{q+1}^{\frac12}, \;\; \supp_{t}(\mathfrak{v}_{q+1}-\widetilde{\mathfrak{v}}_{q+1})\subset \mathcal{I}.
	\end{align*}
Moreover, if we are given two pairs $(\mathfrak{v}_{q}, {\mathring{N}}_{q})$ and $(\widetilde{\mathfrak{v}}_{q},\widetilde{\mathring{N}}_{q})$ satisfying \eqref{w_q_C0}-\eqref{N_q_EN} and 
\begin{align*}
	\supp_{t}(\mathfrak{v}_{q}-\widetilde{\mathfrak{v}}_{q},\mathring{N}_{q}-\widetilde{\mathring{N}}_{q}) \subset \mathcal{J},
\end{align*}
for some interval $\mathcal{J}\subset\R$, we can exhibit corrected counterparts $(\mathfrak{v}_{q+1}, {\mathring{N}}_{q+1})$ and $(\widetilde{\mathfrak{v}}_{q+1},\widetilde{\mathring{N}}_{q+1})$ again satisfying the same conclusion of Proposition \ref{Iterations-SEE} together with the following control on the support of their difference:
\begin{align*}
	\supp_{t}(\mathfrak{v}_{q+1}-\widetilde{\mathfrak{v}}_{q+1}, \mathring{N}_{q+1}-\widetilde{\mathring{N}}_{q+1}) \subset \mathcal{J}+\lambda_{q}^{-\frac85}.
\end{align*}
\end{proposition}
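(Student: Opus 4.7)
The plan is to follow the strategy used in the proof of Proposition \ref{BIP} (itself adapted from \cite{DeLellis+Kwon_2022}), making only the adjustments required by the inviscid setting. Since the construction of $(\mathfrak{v}_{q+1},\mathring{N}_{q+1})$ in the proof of Proposition \ref{Iterations-SEE} parallels that of Proposition \ref{Iterations} with the linear viscous stress $R_{lin}$ simply dropped, the bifurcation mechanism and the support-propagation lemma will apply essentially verbatim. In particular I intend to reuse the amplitude functions $a_\zeta$, cutoffs $\eta_{j,k}$, flow maps $\Phi_{k,j}$, and Beltrami waves exactly as in Sections \ref{sec4}--\ref{sec5}, and to re-verify only those estimates that involve the stress decomposition.

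For the first claim, since $|\mathcal{I}|\geq 3m_q$, I would select an integer $k_0$ and an index $j_0$ such that the block $[k_0+(j_0-1)m_q,\,k_0+(j_0+1)m_q]$ is fully contained in $\mathcal{I}$. I build $(\mathfrak{v}_{q+1},\mathring{N}_{q+1})$ exactly as in Proposition \ref{Iterations-SEE}. For the bifurcated pair $(\widetilde{\mathfrak{v}}_{q+1},\widetilde{\mathring{N}}_{q+1})$, I superimpose on this selected block an additional Beltrami--wave perturbation built from a disjoint set of directions furnished by a second application of Lemma \ref{GL} (for instance obtained via a half-period phase shift of the base waves), while leaving all the cutoffs $\eta_{j,k}$ with $(j,k)\neq(j_0,k_0)$ untouched. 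Because this additional piece has amplitude of size $\sim \varrho^{1/2}$ and oscillates at frequency $\lambda_{q+1}$, it obeys the same $C^0$, $C^1$ and moment bounds as $\omega_{q+1}^{(p)}$, and the new Reynolds stress estimates in Section \ref{R_q+1} remain valid, so $(\widetilde{\mathfrak{v}}_{q+1},\widetilde{\mathring{N}}_{q+1})$ still satisfies \eqref{w_q_C0}--\eqref{w_diff_EN}. The lower bound $\EN\mathfrak{v}_{q+1}-\widetilde{\mathfrak{v}}_{q+1}\EN_{\L^2_x,2}\geq\delta_{q+1}^{1/2}$ would then follow from the pointwise bound $\varrho\geq\delta_{q+1}$ built into \eqref{varrho}, combined with the $L^2$ orthogonality of Beltrami waves from Appendix \ref{Beltrami} and stationary-phase control (Lemma \ref{SPL}) of the oscillatory cross-terms.

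For the second claim, the key observation is that the whole construction is local in time up to the mollification scale $\ell=\lambda_q^{-8/5}$. Wherever $(\mathfrak{v}_q,\mathring{N}_q)$ and $(\widetilde{\mathfrak{v}}_q,\widetilde{\mathring{N}}_q)$ agree, so do their mollifications $(\mathfrak{v}_\ell,\mathring{N}_\ell)$ and $(\widetilde{\mathfrak{v}}_\ell,\widetilde{\mathring{N}}_\ell)$, with the difference supported in a set enlarged by at most $\ell$ (recall that $\psi_\ell$ is one-sided, so the temporal support expands in only one direction by $\ell$). By uniqueness of the ODE flow, the maps $\Phi_{k,j}$ in \eqref{Phi_kj} coincide wherever the drift $\mathfrak{v}_\ell+\mathfrak{z}_\ell$ does; since $a_\zeta$ depends only on $\mathring{N}_\ell$ and the cutoffs $\eta_{j,k}$, the amplitudes also agree outside $\mathcal{J}+\ell$. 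Consequently $\omega_{q+1}=\widetilde{\omega}_{q+1}$ and $\mathfrak{v}_{q+1}=\widetilde{\mathfrak{v}}_{q+1}$ outside $\mathcal{J}+\lambda_q^{-8/5}$, and the same local dependence yields the analogous statement for $\mathring{N}_{q+1}-\widetilde{\mathring{N}}_{q+1}$.

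The principal obstacle I anticipate is making the $L^2$ lower bound in the first claim genuinely quantitative: the principal-part difference $\omega_{q+1}^{(p)}-\widetilde{\omega}_{q+1}^{(p)}$ on the selected block must dominate both the corrector differences $\omega_{q+1}^{(c)}-\widetilde{\omega}_{q+1}^{(c)}$ (of order $\lambda_{q+1}^{-\varepsilon}\delta_{q+1}^{1/2}$) and the oscillatory cross-terms between the original and bifurcated Beltrami waves. This is handled by choosing $a$ sufficiently large so that the leading-order contribution from $\varrho\geq\delta_{q+1}$, integrated in $L^2$, exceeds the sum of these error terms; the argument mirrors the one used to obtain \eqref{w_diff_EN}, only in the opposite direction.
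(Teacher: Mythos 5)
Your argument for the second claim (support propagation through one–sided mollification, locality of the flow maps $\Phi_{k,j}$, and the resulting agreement of the amplitudes outside $\mathcal{J}+\ell$) is correct and matches the paper's proof of Proposition~\ref{BIP}.

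The first claim has a genuine gap: you propose to \emph{superimpose} an additional Beltrami--wave perturbation on the selected block, whereas the paper's construction (which Proposition~\ref{BIP-SEE} inherits from Proposition~\ref{BIP}) instead \emph{replaces} the amplitude $a_\zeta$ by $\widetilde a_\zeta=-a_\zeta$ for $j=j_0$, leaving everything else unchanged. These are materially different. If you add an extra wave $\widetilde\omega=\sum_{\zeta}b_\zeta B_\zeta e^{i\lambda_{q+1}\zeta\cdot\Phi_{k,j_0}}$ on top of the existing $\omega^{(p)}_{q+1}$, the quadratic self-interaction $\widetilde\omega\otimes\widetilde\omega$ produces a \emph{new} low-frequency average $\frac12\sum_\zeta|b_\zeta|^2(\Id-\zeta\otimes\zeta)$ by \eqref{Average}, and this is not absorbed by $\mathring{N}_\ell$ --- the latter is already being canceled by the unchanged $\omega^{(p)}_{q+1}\otimes\omega^{(p)}_{q+1}$. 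Unless you re-solve the geometric identity \eqref{B.5} so that $\widetilde\omega$ annihilates a part of the residual stress (which would require modifying the amplitudes on the other blocks and hence lose the locality needed for the second claim), the oscillation error $\widetilde N_{osc}$ would be of size $\sim\delta_{q+1}$, destroying \eqref{N_q_EN} at level $q+1$. The parenthetical ``half-period phase shift'' is equally problematic: a phase shift by $\pi$ is exactly a sign flip, so \emph{adding} it to the existing block annihilates $\omega^{(p)}_{q+1}$ on $\supp_t(\eta_{j_0,k})$ and leaves $\mathring{N}_\ell$ uncanceled there, again violating \eqref{N_q_EN}.

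The fix is the paper's sign-flip: since $\widetilde a_\zeta^2=a_\zeta^2$, identity \eqref{B1} holds verbatim for $\widetilde a_\zeta$, every estimate of Section~\ref{sec5} carries over unchanged, and the difference of the two constructions is $\omega^{(p)}_{q+1}-\widetilde\omega^{(p)}_{q+1}=2\sum_{\zeta\in\Lambda_{j_0}}a_\zeta B_\zeta e^{i\lambda_{q+1}\zeta\cdot\Phi_{k,j_0}}$. Taking the trace of the resulting quadratic form, using that $\mathring{N}_\ell$ is trace-free, and controlling the non-resonant products by Lemma~\ref{SPL} (as in \eqref{4.23}) gives $\EN\omega^{(p)}_{q+1}-\widetilde\omega^{(p)}_{q+1}\EN^2_{\L^2_x,2}\geq 12(2\pi)^3c_*^{-1}\delta_{q+1}-\delta_{q+1}\geq 4\delta_{q+1}$ from the built-in bound $\varrho\geq\delta_{q+1}$; subtracting the corrector contribution then yields the stated lower bound. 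You correctly identified the error terms to control, but the mechanism producing the difference --- sign reversal of $a_\zeta$, not superposition of a new wave --- is essential for the Reynolds stress estimates to survive.
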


\subsection{Main results}
We have just established the iteration proposition. In view of Propositions \ref{Iterations-SEE} and \ref{BIP-SEE}, we provide the following result for the system \eqref{eqn_stochatic_u-SEE} which can be proved by following the similar arguments used in the proof of Theorem \ref{MR-SHNSE}.

\begin{theorem}\label{MR-SEE}
	Suppose that $\Tr\left((-\Delta)^{\frac52}GG^{\ast}\right)<\infty$ and let $r>1$ be fixed. Then, for any  $\beta\in(0,\frac{1}{200})$, there exist infinitely many $\{\mathcal{F}_t\}_{t\in\R}$-adapted process $\u(\cdot)$ which belongs to $C(\R;C^{\vartheta})$ $\mathbb{P}$-a.s. for $\vartheta\in(0,\frac57\beta)$ and is an analytically weak solution to \eqref{eqn_stochatic_u-SEE} in the sense of Definition \ref{AWS}. Moreover, the solutions satisfies 
	\begin{align}\label{u_EN-SEE}
		\EN\u\EN_{C^{\vartheta},2r}<\infty.
	\end{align}
\end{theorem}

Our next result is the existence of stationary solutions which are constructed as limits of ergodic averages of solutions from Theorem \ref{MR-SEE}. This in particular implies their non-uniqueness. The proof of the following theorem can be done by following the similar arguments used in the proof of Theorem \ref{MR-SS-SHNSE}.
\begin{theorem}\label{MR-SS-SEE}
	Let $\u$ be a solution of system \eqref{eqn_stochatic_u-SEE} obtained in Theorem \ref{MR-SEE} and satisfying \eqref{u_EN-SEE}. Then there exists a sequence $T_{n}\to\infty$ and a stationary solution $\left((\widetilde{\Omega},\widetilde{\mathcal{F}},\widetilde{\mathbb{P}}),\widetilde{\u},\widetilde{\W}\right)$ to \eqref{eqn_stochatic_u-SEE} such that 
	\begin{align*}
		\frac{1}{T_{n}}\int_{0}^{T_{n}}\mathcal{L}\left[S_{t}(\u,\W)\right]\d t\to \mathcal{L}[\widetilde{\u},\widetilde{\W}],
	\end{align*}
	weakly in the sense of probability measures on $\mathcal{T}$ as $n\to\infty$, where $\mathcal{T}$ is same as defined in Section \ref{sec6}.
\end{theorem}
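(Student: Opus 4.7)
The strategy mirrors the proof of Theorem \ref{MR-SS-SHNSE}: construct stationary solutions as narrow limits of ergodic averages of laws of time-shifts of the solution $\u$ produced by Theorem \ref{MR-SEE}, using a Krylov--Bogoliubov argument combined with tightness estimates in a slightly better functional space than $C(\R;C^\vartheta)$, and then represent the limiting law on a new probability space via Jakubowski--Skorokhod.

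First, I would upgrade the regularity of $\u=\mathfrak{v}+\mathfrak{z}$ to obtain joint H\"older regularity in space and time. Concretely, I would pick exponents $\beta',\beta''>0$ with $\beta'<\tfrac{\vartheta}{2}$, $\frac{\vartheta}{2}<\beta''<\vartheta$ and $\beta'+\beta''<\vartheta<\tfrac57\beta$, and then estimate
$\sum_q \EN\mathfrak{v}_{q+1}-\mathfrak{v}_q\EN_{C^{\beta'}_t C^{\beta''}_x,2r}$ by interpolating the bounds \eqref{w_q_C1} and \eqref{w_diff_EN} (exactly as done in Theorem \ref{MR-SS-SHNSE}), and similarly control $\sum_q \EN\mathfrak{z}_{q+1}-\mathfrak{z}_q\EN_{C^{\beta'}_t C^{\beta''}_x,p}$ by a four-fold interpolation between $C^0_{t,x}$, $C^0_t C^1_x$, $C^{1/4}_t C^0_x$, $C^{1/4}_t C^1_x$ based on Proposition \ref{thm_z}, Lemma \ref{Lemma-y_diff_EN}, and the analogue of the bounds \eqref{z_q_EN1}--\eqref{z_q_EN4}. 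Summability will follow from $\varepsilon<\tfrac{1}{24}-\beta$ and $\beta'+\tfrac14\beta''<\tfrac{1}{20}$, and will yield, for every $N\in\N$,
\begin{equation*}
\sup_{s\in\R}\mathbb{E}\Bigl[\|\u(\cdot+s)\|^{2r}_{C^{\beta'}([-N,N];C^{\beta''}_x)}\Bigr]\lesssim N.
\end{equation*}

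Next, I would introduce the ergodic averages $\mu_T:=\tfrac1T\int_0^T \mathcal{L}[S_t(\u,\W)]\,dt$ on the trajectory space $\mathcal{T}=C(\R;C^{\vartheta/2})\times C(\R;C^{\vartheta/2})$, defined as narrow limits of Riemann sums against test functions in $C_b(\mathcal{T})$. Tightness of $\{\mu_T\}_{T>0}$ reduces to tightness of each marginal: for $\u$, Arzel\`a--Ascoli together with the bound above shows that sets of the form $\bigcap_{N\ge M}\{\|g\|_{C^{\beta'}([-N,N];C^{\beta''}_x)}\leq R_N\}$ are relatively compact in $C(\R;C^{\vartheta/2})$ (since $\vartheta/2<\beta''$); tightness of the $\W$-marginal is immediate because $S_t\W$ is again a Wiener process, whose law does not depend on $t$. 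Hence along a subsequence $T_n\to\infty$ one has $\mu_{T_n}\rightharpoonup\mu$ in $\mathcal{P}(\mathcal{T})$.

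To identify $\mu$ as the law of an analytically weak solution, I would define the closed subset $A\subset\mathcal{T}$ of trajectories $(\u,\W)$ that satisfy \eqref{eqn_stochatic_u-SEE} tested against all divergence-free $\upsilon\in C^\infty(\T3)$ for every $s\leq t$ (this is the Euler analogue of the set $A$ in the proof of Theorem \ref{MR-SS-SHNSE}, without the fractional dissipation term). Since the solution from Theorem \ref{MR-SEE} satisfies the equation for every shift, $\mu_{T_n}(A)=1$ and hence $\mu(A)=1$ by lower semicontinuity. Invoking Jakubowski--Skorokhod on the quasi-Polish space $\mathcal{T}$ produces a probability space $(\widetilde\Omega,\widetilde{\mathcal{F}},\widetilde{\mathbb{P}})$ with $(\widetilde{\u}^n,\widetilde{\W}^n)$ of law $\mu_{T_n}$ converging $\widetilde{\mathbb{P}}$-a.s. to $(\widetilde{\u},\widetilde{\W})$ of law $\mu$, and a standard passage to the limit in the nonlinear equation (using a.s.\ convergence in $C(\R;C^{\vartheta/2})$ and the continuity of the quadratic nonlinearity on this space) shows that $(\widetilde{\u},\widetilde{\W})$ solves \eqref{eqn_stochatic_u-SEE} in the sense of Definition \ref{AWS}.

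Finally, stationarity follows from the standard Krylov--Bogoliubov shift-invariance computation: for any $\tau\in\R$ and $\Psi\in C_b(\mathcal{T})$, a change of variables in the time integral yields
\begin{equation*}
\frac{1}{T_n}\int_0^{T_n}\mathcal{L}[S_{t+\tau}(\u,\W)](\Psi)\,dt
-\frac{1}{T_n}\int_0^{T_n}\mathcal{L}[S_t(\u,\W)](\Psi)\,dt
=O\!\Bigl(\tfrac{|\tau|\,\|\Psi\|_\infty}{T_n}\Bigr)\to 0,
\end{equation*}
so $\mu(\Psi\circ S_\tau)=\lim_n \mu_{-\tau,T_n}(\Psi)=\mu(\Psi)$, and $\mathcal{L}[S_\tau(\widetilde{\u},\widetilde{\W})]=\mathcal{L}[\widetilde{\u},\widetilde{\W}]$.

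The main obstacle is the first step: verifying the interpolation/summability in the joint H\"older norm $C^{\beta'}_t C^{\beta''}_x$ for the noise part $\mathfrak{z}$. Unlike the hypodissipative case, Euler has no smoothing in $\mathfrak{z}$ beyond what is explicitly provided by the noise regularity in Proposition \ref{thm_z}, so the four-exponent interpolation must be done carefully to exploit both the frequency cutoff $f(q)=\lambda_q^{1/3}$ in the definition of $\mathfrak{z}_q$ and the temporal $C^{1/2-\delta}$ regularity, while keeping $\varepsilon$ strictly below $\tfrac{1}{24}-\beta$. Once this summability is in hand, all the subsequent tightness, limit-identification, and stationarity arguments transfer verbatim from the hypodissipative proof.
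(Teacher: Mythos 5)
Your proposal is correct and follows exactly the same route as the paper, which proves Theorem \ref{MR-SS-SEE} by invoking the argument of Theorem \ref{MR-SS-SHNSE} verbatim (interpolation to $C^{\beta'}_tC^{\beta''}_x$, Krylov--Bogoliubov tightness, Jakubowski--Skorokhod, and the shift-invariance computation). The ``main obstacle'' you flag at the end is in fact not one: Proposition \ref{thm_z} yields the \emph{same} $C^{1/2-\delta}_t H^{3/2+\sigma}$ moment bound for $\mathfrak{z}$ as Proposition \ref{thm_z_alpha} does for $\z$ (the absence of dissipation is compensated by the stronger trace assumption on the noise), so the four-fold interpolation and its summability carry over without any extra care.
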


	\begin{appendix}
	\renewcommand{\thesection}{\Alph{section}}
	\numberwithin{equation}{section}
	\section{Proof of the key results}\label{SupportingResults}\setcounter{equation}{0}
	The purpose of this section is to provide the proof of some key results which are used to prove the main results of this article. In particular, we will provide the proofs of Lemma \ref{Lemma-z_diff_EN} and Proposition \ref{BIP}, respectively.

	\subsection{Proof of Lemma \ref{Lemma-z_diff_EN}}\label{A.2}
%\begin{proof} 
% The proof closely follows from \cite[Lemma 5.2]{Lu+Zhu_Arxiv}.	
By the definition of $\z_{q}$, we deduce for any $p\geq2$
	\begin{align}\label{I+II+III}
		&	\EN\z_{q+1}-\z_{q}\EN_{C^0,p}^p
		=\sup_{t\in\R}\mathbb{E}\left[\sup_{t\leq s\leq t+1}\|\z_{q+1}(s)-\z_{q}(s)\|^p_{C^0_{x}}\right]
		\nonumber\\ & =\sup_{t\in\R}\mathbb{E}\bigg[\sup_{t\leq s\leq t+1}\|\chi_{q+1}\left(\|\wi\z_{q+1}(s)\|_{C^0_{x}}\right)\wi\chi_{q+1}\left(\|\wi\z_{q+1}(s)\|_{C^1_{x}}\right)\wi\z_{q+1}(s) -\chi_q\left(\|\wi\z_q(s)\|_{C^0_{x}}\right)\wi\chi_q\left(\|\wi\z_q(s)\|_{C^1_{x}}\right)\wi\z_q(s)\|^p_{C^0_{x}}\bigg]
		\nonumber\\ & \leq \sup_{t\in\R}\mathbb{E}\left[\sup_{t\leq s\leq t+1}\|\wi\z_{q+1}(s)-\wi\z_{q}(s)\|^p_{C^0_{x}} \cdot \chi_{q+1}^p (\|\wi\z_{q+1}(s)\|_{C^0_{x}}) \cdot \wi\chi_{q+1}^p (\|\wi\z_{q+1}(s)\|_{C^1_{x}} )\right]
		\nonumber\\ & \quad + \sup_{t\in\R}\mathbb{E}\left[\sup_{t\leq s\leq t+1}\|\wi\z_{q}(s)\|^p_{C^0_{x}}\cdot\left|\chi_{q+1} (\|\wi\z_{q+1}(s)\|_{C^0_{x}} ) - \chi_{q} (\|\wi\z_{q}(s)\|_{C^0_{x}} )\right|^p\cdot \wi\chi_{q}^p (\|\wi\z_{q}(s)\|_{C^1_{x}} )\right]
		\nonumber\\ & \quad + \sup_{t\in\R}\mathbb{E}\left[\sup_{t\leq s\leq t+1}\|\wi\z_{q}(s)\|^p_{C^0_{x}}\cdot\chi_{q+1}^p (\|\wi\z_{q+1}(s)\|_{C^0_{x}} )\cdot\left|\wi\chi_{q+1} (\|\wi\z_{q+1}(s)\|_{C^1_{x}} )- \wi\chi_{q} (\|\wi\z_{q}(s)\|_{C^1_{x}} )\right|^p\right]
		\nonumber \\ & := \mathbf{I}+\mathbf{I\!I}+\mathbf{I\!I\!I}.
	\end{align}
	\vskip 2mm
	\noindent
	\textit{Estimate for $\mathbf{I}$.} By the definition of $\wi\z_{q}$, we have
	\begin{align*}
		\wi\z_{q+1}(t,x)-\wi\z_{q}(t,x)=\sum_{\lambda_{q}^{\frac13}<|k|\leq\lambda_{q+1}^{\frac13}}e^{ik\cdot x}\hat{\z}(t,k),
	\end{align*}
	where $\hat{\z}$ is the Fourier transform of $\z$, and $k\in\mathbb{Z}^3$. Then using H\"older's inequality, we deduce for any $t\in\R$ that
	 \begin{align}\label{A.8}
		& \|\wi\z_{q+1}(t)-\wi\z_{q}(t)\|_{\L^{\infty}} 
        \nonumber\\ & \leq	\sum_{\lambda_{q}^{\frac13}<|k|\leq\lambda_{q+1}^{\frac13}}\left|\hat{\z}(t,k)\right|
		  = \sum_{\lambda_{q}^{\frac13}<|k|\leq\lambda_{q+1}^{\frac13}}(1+|k|^2)^{\frac54}\left|\hat{\z}(t,k)\right| (1+|k|^2)^{-\frac54}
		\nonumber\\ & \leq \left(\sum_{\lambda_{q}^{\frac13}<|k|\leq\lambda_{q+1}^{\frac13}}(1+|k|^2)^{\frac52}\left|\hat{\z}(t,k)\right|^2\right)^{\frac12}\cdot\left(\sum_{\lambda_{q}^{\frac13}<|k|\leq\lambda_{q+1}^{\frac13}}  (1+|k|^2)^{-\frac52}\right)^{\frac12}
		\lesssim \lambda_{q}^{-\frac13 } \|\z(t)\|_{\H^{\frac52}}
	\end{align}
	Taking expectation of \eqref{A.8} and using \eqref{z_q_EN1}, we obtain 
	 \begin{align}\label{I}
		\mathbf{I} & \leq  \sup_{t\in\R}\mathbb{E}\left[\sup_{t\leq s\leq t+1} \|\wi\z_{q+1}(s)-\wi\z_{q}(s)\|^p_{C^0_{x}} \right]
		  \leq  \lambda_{q}^{-\frac13 p} \sup_{t\in\R}\mathbb{E}\left[\sup_{t\leq s\leq t+1} \|\z(t)\|_{\H^{\frac52}}^p \right]
		  \leq  \lambda_{q}^{-\frac13 p} \EN\z\EN_{\H^{\frac52},p}^{p}
		  \leq (\lambda_{q}^{-\frac13}\sqrt{p}L)^p.
	\end{align}
	\textit{Estimate for $\mathbf{I\!I}$.} By the definitions of $\chi_{q}$ and $\chi_{q+1}$ (see Subsection \ref{Itra+Pro}), we deduce that
	\begin{align}\label{A.10}
		 \left|\chi_{q+1} (\|\wi\z_{q+1}(s)\|_{C^0_{x}} )- \chi_{q} (\|\wi\z_{q}(s)\|_{C^0_{x}} )\right|
		%   & \leq \mathds{1}_{\left\{\|\wi\z_{q}(s)\|_{C^0_x}\leq \frac{1}{7\cdot4\cdot2\cdot2}\lambda_{q}^{\frac13-\varepsilon},\|\wi\z_{q+1}(s)\|_{C^0_x}>\frac{1}{7\cdot4\cdot2\cdot2}\lambda_{q+1}^{\frac13-\varepsilon}\right\}} + \mathds{1}_{\left\{\|\wi\z_{q}(s)\|_{C^0_x}>\frac{1}{7\cdot4\cdot2\cdot2}\lambda_{q}^{\frac13-\varepsilon}\right\}}
	%	\nonumber\\ & 
        \leq \mathds{1}_{\left\{\|\wi\z_{q+1}(s)\|_{C^0_x}>\frac{1}{7\cdot4\cdot2\cdot2}\lambda_{q+1}^{\frac13-\varepsilon}\right\}} + \mathds{1}_{\left\{\|\wi\z_{q}(s)\|_{C^0_x}>\frac{1}{7\cdot4\cdot2\cdot2}\lambda_{q}^{\frac13-\varepsilon}\right\}}.
	\end{align}
	Using \eqref{A.10} and H\"older's inequality, we obtain
	\begin{align}\label{A.11}
	&	\mathbf{I\!I} 
		 \leq \sup_{t\in\R}\mathbb{E}\left[\sup_{t\leq s\leq t+1}\|\wi\z_{q}(s)\|^p_{C^0_{x}}\cdot \mathds{1}_{\left\{\|\wi\z_{q+1}(s)\|_{C^0_x}>\frac{1}{7\cdot4\cdot2\cdot2}\lambda_{q+1}^{\frac13-\varepsilon}\right\}}\right]
		+ \sup_{t\in\R}\mathbb{E}\left[\sup_{t\leq s\leq t+1}\|\wi\z_{q}(s)\|^p_{C^0_{x}}\cdot   \mathds{1}_{\left\{\|\wi\z_{q}(s)\|_{C^0_x}>\frac{1}{7\cdot4\cdot2\cdot2}\lambda_{q}^{\frac13-\varepsilon}\right\}}\right]
		\nonumber\\ &   \leq \sup_{t\in\R}\left(\mathbb{E}\left[\sup_{t\leq s\leq t+1}\|\wi\z_{q}(s)\|^{2p}_{C^0_{x}}\right]\right)^{\frac12} \left[  \left(\mathbb{P}\left\{\|\wi\z_{q+1}\|_{C^0_{t,x}}>\frac{1}{112}\lambda_{q+1}^{\frac13-\varepsilon}\right\}\right)^{\frac12}
		 +   \left(\mathbb{P}\left\{\|\wi\z_{q}(s)\|_{C^0_{t,x}}>\frac{1}{112}\lambda_{q}^{\frac13-\varepsilon}\right\}\right)^{\frac12}\right].
	\end{align}
	By Chebyshev's inequality and \eqref{z_q_EN1}, we have 
	\begin{align}\label{A.12}
		\sup_{t\in\R}\left(\mathbb{P}\left\{\|\wi\z_{q}(s)\|_{C^0_{t,x}}>\frac{1}{112}\lambda_{q}^{\frac13-\varepsilon}\right\}\right)^{\frac12} 
		%&\leq (112)^p \lambda_{q}^{-(\frac13-\varepsilon)p} \sup_{t\in\R}\left(\mathbb{E}\left[\sup_{t\leq s\leq t+1}\|\wi\z_{q}(s)\|^{2p}_{C^0_{x}}\right]\right)^{\frac12}
		   \leq (112)^p \lambda_{q}^{-(\frac13-\varepsilon)p}  (\sqrt{2p}L)^p,
	\end{align}
	and 
	\begin{align}\label{A.13}
	\sup_{t\in\R}\left(\mathbb{P}\left\{\|\wi\z_{q+1}(s)\|_{C^0_{t,x}}>\frac{1}{112}\lambda_{q+1}^{\frac13-\varepsilon}\right\}\right)^{\frac12} 
	%& \leq (112)^p \lambda_{q+1}^{-(\frac13-\varepsilon)p} \sup_{t\in\R}\left(\mathbb{E}\left[\sup_{t\leq s\leq t+1}\|\wi\z_{q+1}(s)\|^{2p}_{C^0_{x}}\right]\right)^{\frac12}
		  \leq (112)^p \lambda_{q+1}^{-(\frac13-\varepsilon)p}  (\sqrt{2p}L)^p.
	\end{align}
	Therefore, using \eqref{A.11}-\eqref{A.13} and \eqref{z_q_EN1}, we write 
	\begin{align}\label{II}
		\mathbf{I\!I} &\leq \EN\wi\z_{q}\EN_{C^0,2p}^p (112\sqrt{2p}L)^p(\lambda_{q}^{-(\frac13-\varepsilon)p} +\lambda_{q+1}^{-(\frac13-\varepsilon)p} )
		 \lesssim (112pL^2)^{p}\lambda_{q}^{-(\frac13-\varepsilon)p}.
	\end{align}
	\textit{Estimate for $\mathbf{I\!I\!I}$.} By the definitions of $\wi\chi_{q}$ and $\wi\chi_{q+1}$ (see Subsection \ref{Itra+Pro}), we deduce that
	\begin{align}\label{A.15}
		 \left|\wi\chi_{q+1} (\|\wi\z_{q+1}(s)\|_{C^1_{x}} )- \wi\chi_{q} (\|\wi\z_{q}(s)\|_{C^1_{x}} )\right|
		 % & \leq \mathds{1}_{\left\{\|\wi\z_{q}(s)\|_{C^1_x}\leq\lambda_{q}^{\frac23-\varepsilon},\|\wi\z_{q+1}(s)\|_{C^1_x}>\lambda_{q+1}^{\frac23-\varepsilon}\right\}} + \mathds{1}_{\left\{\|\wi\z_{q}(s)\|_{C^1_x}>\lambda_{q}^{\frac23-\varepsilon}\right\}}
		%\nonumber\\ &
        \leq \mathds{1}_{\left\{\|\wi\z_{q+1}(s)\|_{C^1_x}>\lambda_{q+1}^{\frac23-\varepsilon}\right\}} + \mathds{1}_{\left\{\|\wi\z_{q}(s)\|_{C^1_x}>\lambda_{q}^{\frac23-\varepsilon}\right\}}.
	\end{align}
	Using \eqref{A.15} and H\"older's inequality, we obtain
	\begin{align}\label{A.16}
		\mathbf{I\!I\!I} 
		& \leq \sup_{t\in\R}\mathbb{E}\left[\sup_{t\leq s\leq t+1}\|\wi\z_{q}(s)\|^p_{C^0_{x}}\cdot \mathds{1}_{\left\{\|\wi\z_{q+1}(s)\|_{C^1_x}>\lambda_{q+1}^{\frac23-\varepsilon}\right\}}\right]
		+ \sup_{t\in\R}\mathbb{E}\left[\sup_{t\leq s\leq t+1}\|\wi\z_{q}(s)\|^p_{C^0_{x}}\cdot   \mathds{1}_{\left\{\|\wi\z_{q}(s)\|_{C^1_x}>\lambda_{q}^{\frac23-\varepsilon}\right\}}\right]
		\nonumber\\ &  \leq \sup_{t\in\R}\left(\mathbb{E}\left[\sup_{t\leq s\leq t+1}\|\wi\z_{q}(s)\|^{2p}_{C^0_{x}}\right]\right)^{\frac12}
        \left[ \left(\mathbb{P}\left\{\|\wi\z_{q+1}(s)\|_{C^0_tC^1_x}>\lambda_{q+1}^{\frac23-\varepsilon}\right\}\right)^{\frac12}
		 +  \left(\mathbb{P}\left\{\|\wi\z_{q}(s)\|_{C^0_tC^1_x}>\lambda_{q}^{\frac23-\varepsilon}\right\}\right)^{\frac12}\right].
	\end{align}
	By Chebyshev's inequality and \eqref{z_q_EN2}, we have 
	\begin{align}\label{A.17}
		\sup_{t\in\R}\left(\mathbb{P}\left\{\|\wi\z_{q}(s)\|_{C^0_tC^1_x}>\lambda_{q}^{\frac23-\varepsilon}\right\}\right)^{\frac12} & \leq \lambda_{q}^{-(\frac23-\varepsilon)p} \sup_{t\in\R}\left(\mathbb{E}\left[\sup_{t\leq s\leq t+1}\|\wi\z_{q}(s)\|^{2p}_{C^1_{x}}\right]\right)^{\frac12}
		\leq \lambda_{q}^{-(\frac23-\varepsilon)p} \lambda_{q}^{\frac13p} (\sqrt{2p}L)^p,
	\end{align}
	and 
	\begin{align}\label{A.18}
		\sup_{t\in\R}\left(\mathbb{P}\left\{\|\wi\z_{q+1}(s)\|_{C^0_tC^1_x}>\lambda_{q+1}^{\frac23-\varepsilon}\right\}\right)^{\frac12} & \leq \lambda_{q+1}^{-(\frac23-\varepsilon)p} \sup_{t\in\R}\left(\mathbb{E}\left[\sup_{t\leq s\leq t+1}\|\wi\z_{q+1}(s)\|^{2p}_{C^1_{x}}\right]\right)^{\frac12}
		 \leq \lambda_{q+1}^{-(\frac23-\varepsilon)p} \lambda_{q+1}^{\frac13p} (\sqrt{2p}L)^p.
	\end{align}
	Therefore, using \eqref{A.16}-\eqref{A.18} and \eqref{z_q_EN1}, we write 
	\begin{align}\label{III}
		\mathbf{I\!I\!I} &\leq \EN\wi\z_{q}\EN_{C^0,2p}^p (\sqrt{2p}L)^p(\lambda_{q}^{-(\frac13-\varepsilon)p} +\lambda_{q+1}^{-(\frac13-\varepsilon)p})
		 \lesssim (2pL^2)^{p}\lambda_{q}^{-(\frac13-\varepsilon)p}.
	\end{align}
	Hence, in view of \eqref{I+II+III}, \eqref{I}, \eqref{II} and \eqref{III}, we have 
	\begin{align*}
		\EN\z_{q+1}-\z_{q}\EN_{C^0,p}& \lesssim (\mathbf{I})^{1/p}+(\mathbf{I\!I})^{1/p}+(\mathbf{I\!I\!I})^{1/p}
		 \lesssim \lambda_{q}^{-\frac13}\sqrt{p}L+ pL^2\lambda_{q}^{-(\frac13-\varepsilon)}
	 \lesssim  pL^2(\lambda_{q}^{-\frac13}+\lambda_{q}^{-(\frac13-\varepsilon)})
		  \lesssim	pL^2\lambda_{q}^{-(\frac13-\varepsilon)}.
	\end{align*}
 This completes the proof.
%\end{proof}	

		\subsection{Proof of Proposition \ref{BIP}}\label{A.3}
	Let us choose and fix an interval $\mathcal{I}\subset [k,k+1]$ with $|\mathcal{I}|\geq 3m_q$. Then we can always find $j_0$ such that 
	\begin{align*}
		\supp_{t} (\eta_{j_0,k}(\cdot))=\supp_{t}(\eta(m_q^{-1}(\cdot-k)-j_0))\subset \mathcal{I}.
	\end{align*}
	For $j=j_0$, we replace $\Gamma_{\zeta}^{(j)}$ in $\omega_{q+1}$ by $\wi\Gamma_{\zeta}^{(j)}=-\Gamma_{\zeta}^{(j)}$. In other words, we replace $a_{\zeta}$ by $\wi a_{\zeta}=-a_{\zeta}$. Otherwise, we keep the same $\Gamma_{\zeta}^{(j)}$. Note that $\wi\Gamma_{\zeta}^{(j)}$ still solves \eqref{B.5} and hence $\wi a_{\zeta}$ satisfies \eqref{B1}. Let us denote the velocity perturbation, principle part of velocity perturbation, corrector part of velocity perturbation, new velocity and new stress tensor corresponding to $\wi a_{\zeta}$ by $\wi\omega_{q+1}$, $\wi\omega_{q+1}^{(p)}$, $\wi\omega_{q+1}^{(c)}$, $\wi\v_{q+1}$ and $\widetilde{\mathring{R}}_{q+1}$.  Observe that $\wi a_{\zeta}$ also satisfies the same bound as $a_{\zeta}$ in \eqref{ak_N}. As a result, the pair $(\wi\v_{q+1},\widetilde{\mathring{R}}_{q+1})$ satisfies \eqref{v_q_C0}-\eqref{R_q_EN} at level $q+1$ and \eqref{v_diff_EN} as desired. 
	
	On the other hand, by the construction, $\wi\omega_{q+1}$ differs from $\omega_{q+1}$ on the support of $\supp_{t} (\eta_{j_0,k}(\cdot))$. Therefore, we can easily see
	\begin{align*}
		\supp_{t}(\v_{q+1}-\wi\v_{q+1})= \supp_{t}(\omega_{q+1}-\wi\omega_{q+1})\subset \mathcal{I}.
	\end{align*}
	We  have
	\begin{align}
		& \omega_{q+1}^{(p)}-\wi\omega_{q+1}^{(p)}
		= 2 \sum_{\zeta\in\Lambda_{j_0}} c_{\ast}^{-\frac12} \varrho^{\frac12}\cdot\eta_{j_0,k} \cdot \Gamma_{\zeta}^{(j_0)}\left(\Id- c_{\ast}\frac{ \mathring{R}_{\ell}}{\varrho}\right)\cdot B_{\zeta}e^{i\lambda_{q+1}\zeta\cdot\Phi_{k,j_0}},
	\end{align}
	Consider
	\begin{align}
		& (\omega_{q+1}^{(p)}-\wi\omega_{q+1}^{(p)})\otimes(\omega_{q+1}^{(p)}-\wi\omega_{q+1}^{(p)})
		\nonumber\\ & = 4 \sum_{\zeta} c_{\ast}^{-1} \varrho\cdot\eta^2_{j_0,k} \cdot \left[\Gamma_{\zeta}^{(j_0)}\left(\Id- c_{\ast}\frac{ \mathring{R}_{\ell}}{\varrho}\right)\right]^2\cdot B_{\zeta}\otimes B_{\zeta^{\prime}}
		+4 \sum_{\zeta+\zeta^{\prime}\neq 0} a_{\zeta}a_{\zeta^{\prime}}\Wb_{\zeta}(\Phi_{k,j_0})\otimes \Wb_{\zeta'}(\Phi_{k,j_0})
		\nonumber\\ & = 4  \eta^2_{j_0,k}\left(c_{\ast}^{-1} \varrho\Id-  \mathring{R}_{\ell}\right)
		+ \sum_{\zeta+\zeta^{\prime}\neq 0} a_{\zeta}a_{\zeta^{\prime}}\Wb_{\zeta}(\Phi_{k,j_0})\otimes \Wb_{\zeta'}(\Phi_{k,j_0}),
	\end{align}
	which gives
	\begin{align}\label{omega-diff}
		& |\omega_{q+1}^{(p)}-\wi\omega_{q+1}^{(p)}|^2
		= 12  \eta^2_{j_0,k}c_{\ast}^{-1} \varrho\Id
		+ \sum_{\zeta+\zeta^{\prime}\neq 0} \Tr\left(a_{\zeta}a_{\zeta^{\prime}}\Wb_{\zeta}(\Phi_{k,j_0})\otimes \Wb_{\zeta'}(\Phi_{k,j_0})\right).
	\end{align}

	In view of Lemma \ref{SPL}, we estimate
	\begin{align}\label{4.23}
		&\left|\mathbb{E}\left[ \sum_{ \zeta+\zeta'\neq0}  \int_{\T3} \Tr\left(a_{\zeta}a_{\zeta'} \Wb_{\zeta}(\Phi_{k,j_0})\otimes \Wb_{\zeta'}(\Phi_{k,j_0})\right)\d x\right]\right|
		\leq \mathbb{E}\left[ \sum_{ \zeta+\zeta'\neq0} \left| \int_{\T3} \Tr\left(a_{\zeta}a_{\zeta'} \Wb_{\zeta}(\Phi_{k,j_0})\otimes \Wb_{\zeta'}(\Phi_{k,j_0})\right)\d x\right|\right]
		\nonumber\\ & \lesssim \mathbb{E}\left[\sum_{\zeta+\zeta'\neq0} \left| \int_{\T3} a_{\zeta}a_{\zeta'} B_{\zeta}\otimes B_{-\zeta} e^{i\lambda_{q+1}(\zeta+\zeta')\cdot\Phi_{k,j_0}} \d x\right|\right]
		\nonumber\\ & \lesssim \mathbb{E}\left[\sum_{\zeta+\zeta'\neq0} \frac{\|a_{\zeta}a_{\zeta'}\|_{C^0_{t}C^m_{x}} + \|a_{\zeta}a_{\zeta'}\|_{C^0_{t,x}}\|\nabla\Phi_{k,j_0}\|_{C^0_{t}C^m_{x}}}{\lambda_{q+1}^{m}}   \right]
		\nonumber\\ &  \lesssim \mathbb{E}\left[ \frac{m_q^{-m}\lambda_q^6 + \lambda_q^6 \lambda_q^{\frac{8}{5}(m-1)} }{\lambda_{q+1}^{m}}   \right]
		\lesssim \lambda_{q}^{ \frac{81}{16}m + 6 -\frac{23}{4} m }  \leq  \delta_{q+1},
	\end{align}
	where we have used    \eqref{a_kk'_N}, \eqref{B.10} and $m> 33$,  and $a$ is chosen sufficiently large to absorb the constant. Making use of \eqref{omega-diff}, \eqref{varrho}, \eqref{R_q_EN} and \eqref{4.23}, we estimate
	\begin{align}\label{4.24}
		\EN\omega_{q+1}^{(p)}-\wi\omega_{q+1}^{(p)}\EN_{\L^{2}_{x},2}^2 \geq  12 (2\pi)^3 c_{\ast}^{-1} \delta_{q+1} - \delta_{q+1} \geq 4\delta_{q+1}.
	\end{align}
	Therefore, using \eqref{4.24} and \eqref{w_c_EN}, we obtain
	\begin{align}
		\EN\v_{q+1}-\wi\v_{q+1}\EN_{\L^{2}_{x},2}  & = \EN\omega_{q+1}-\wi\omega_{q+1}\EN_{\L^{2}_{x},2}
		\nonumber\\ & \geq \EN\omega_{q+1}^{(p)}-\wi\omega_{q+1}^{(p)}\EN_{\L^{2}_{x},2} - (2\pi)^{\frac32}(\EN\omega_{q+1}^{(c)}\EN_{C^0,2}+ \EN\wi\omega_{q+1}^{(c)}\EN_{C^0,2})
		 \geq 2\delta_{q+1}^{\frac12} -  \delta_{q+1}^{\frac12} = \delta_{q+1}^{\frac12},
	\end{align}
	where $a$ is chosen sufficiently large to absorb the constant in the inequality $$\EN\omega_{q+1}^{(c)}\EN_{C^0,2r} + \EN\wi \omega_{q+1}^{(c)}\EN_{C^0,2r}  \lesssim     2 \lambda_{q}^{-\frac{11}{16}-\frac{27}{8}\beta} \delta_{q+1}^{\frac{1}{2}} \leq \frac{1}{(2\pi)^{\frac32}}\delta_{q+1}^{\frac12}.$$
	Lastly, we suppose that a pair $(\wi\v_{q},\widetilde{\mathring{R}}_{q})$ (satisfying system \eqref{eqn_v_q}) satisfies \eqref{v_q_C0}-\eqref{R_q_EN} and 
	\begin{align*}
		\supp_{t}(\v_{q}-\wi\v_{q},\mathring{R}_{q}-\widetilde{\mathring{R}}_{q}) \subset \mathcal{J},
	\end{align*}
	for some time interval $\mathcal{J}$. Proceed to construct the regularized flow, $\widetilde{\mathring{R}}_{\ell}$ as we did for $\mathring{R}_{\ell}$ and note that they differ only in $\mathcal{J}+\ell=\mathcal{J}+\lambda_{q}^{-\frac85}$. As a result, $\omega_{q+1}$ differ from $\wi\omega_{q+1}$ only in $\mathcal{J}+\lambda_{q}^{-\frac85}$ and hence the pairs $( \v_{q+1}, {\mathring{R}}_{q+1})$ and $(\wi\v_{q+1},\widetilde{\mathring{R}}_{q+1})$ satisfy 
	\begin{align*}
		\supp_{t}(\v_{q+1}-\wi\v_{q+1}, \mathring{R}_{q+1}-\widetilde{\mathring{R}}_{q+1}) \subset \mathcal{J}+\lambda_{q}^{-\frac85}.
	\end{align*}
	This completes the proof.

	\section{Beltrami waves}\label{Beltrami}\setcounter{equation}{0}
	In this section, we recall the Beltrami waves from \cite[Section 5.4]{Buckmaster+Vicol_2019_Notes} which is adapted to the convex integration technique in Proposition \ref{Iterations}. Note that the construction discussed below is independent of sample points $\omega$, that is, it is purely deterministic. First of all we recall the definition of Beltrami waves.
	
	Given $\zeta\in\mathbb{S}^2\cap\mathbb{Q}^3$, let $A_{\zeta}\in \mathbb{S}^2\cap\mathbb{Q}^3$ satisfy $A_{\zeta}\cdot\zeta=0$ and $A_{-\zeta}=A_{\zeta}$. Furthermore, we also define a complex vector
	\begin{align}\label{Bk}
		B_{\zeta} :=\frac{1}{\sqrt{2}} (A_{\zeta} + i\zeta\times A_{\zeta}) \in \mathbb{C}^3.
	\end{align}
 It is immediate from the definition of $B_{\zeta}$ that it satisfies
 \begin{align*}
 	|B_{\zeta}|=1,\;\; B_{\zeta}\cdot\zeta=0, \;\; i\zeta\times B_{\zeta}=B_{\zeta},\;\; B_{-\zeta}=\overline{B_{\zeta}}.
 \end{align*}
With the above preparation, we infer that for any $\lambda\in\mathbb{Z}$, such that $\lambda\zeta\in\mathbb{Z}^3$, the function
\begin{align*}
	\Wb_{\zeta}:=\Wb_{\zeta,\lambda}(x):= B_{\zeta}e^{i\lambda\zeta\cdot x},
\end{align*}
is $\T3$-periodic, divergence-free, and is an eigenfunction of the curl operator with eigenvalue $\lambda$. That is, $\Wb_{\zeta}$ is a complex Beltrami plane wave. Next, we discuss a useful property of linear combinations of complex Beltrami plane waves.
\begin{lemma}[{\cite[Proposition 5.5]{Buckmaster+Vicol_2019_Notes}}]\label{Beltrami_P}
	Let $\Lambda$ be a given finite subset of $\mathbb{S}^2\cap\mathbb{Q}^3$ such that $-\Lambda=\Lambda$, and let $\lambda\in\mathbb{Z}$ be such that $\lambda\Lambda\subset\mathbb{Z}$. Then for any choice of coefficients $a_{\zeta}\in\mathbb{C}$ with $\overline{a_{\zeta}}=a_{-\zeta}$ the vector field
	\begin{align}\label{Beltrami1}
		\Wb(x) = \sum_{\zeta\in\Lambda} a_\zeta B_\zeta e^{i\lambda\zeta\cdot x},
	\end{align}
	is a real-valued, divergence-free Beltrami vector field $\curl\Wb=\lambda\Wb$, and satisfies stationary Euler equations
	\begin{align}\label{SEE}
		\diver(\Wb\otimes \Wb) = \nabla \frac{\vert \Wb\vert^2}{2}.
	\end{align}
	Furthermore, we also have
	\begin{align}\label{Average}
	\langle \Wb\otimes \Wb \rangle :=  \fint_{\T3} \Wb (x) \otimes \Wb (x) \; \d x = \frac{1}{2} \sum_{\zeta\in\Lambda} \vert a_\zeta\vert^2 \left(\Id - \zeta \otimes \zeta\right).
	\end{align}
\end{lemma}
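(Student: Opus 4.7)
\smallskip
\noindent
\textbf{Proof plan for Lemma \ref{Beltrami_P}.} The statement is classical, and the proof decomposes cleanly into four independent verifications which I would carry out in the following order.

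First, I would check that $\Wb$ is real-valued. Reindexing the sum under $\zeta \mapsto -\zeta$ (which preserves $\Lambda$) and applying the two conjugation relations $\overline{a_\zeta}=a_{-\zeta}$ and $B_{-\zeta}=\overline{B_\zeta}$ recalled right after \eqref{Bk}, each term of $\Wb$ appears together with its complex conjugate, so $\overline{\Wb(x)}=\Wb(x)$. Next, for the differential identities I would work mode by mode: since $B_\zeta\cdot \zeta=0$, the plane wave $B_\zeta e^{i\lambda\zeta\cdot x}$ is divergence-free, and a direct computation of the curl yields
\begin{align*}
\curl(B_\zeta e^{i\lambda\zeta\cdot x}) = i\lambda\,\zeta\times B_\zeta\, e^{i\lambda\zeta\cdot x} = \lambda\, B_\zeta e^{i\lambda\zeta\cdot x},
\end{align*}
where the second equality uses the relation $i\zeta\times B_\zeta = B_\zeta$ recorded below \eqref{Bk}. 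Summing over $\zeta\in\Lambda$ delivers $\diver \Wb=0$ and $\curl \Wb = \lambda\, \Wb$.

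For the stationary Euler identity \eqref{SEE}, I would invoke the standard vector identity
\begin{align*}
(\Wb\cdot\nabla)\Wb = \nabla\tfrac{|\Wb|^2}{2} - \Wb\times\curl \Wb,
\end{align*}
valid for any smooth vector field. Because $\curl\Wb=\lambda\Wb$ we have $\Wb\times\curl\Wb=\lambda\,\Wb\times\Wb=0$, and since $\diver \Wb=0$ the left-hand side equals $\diver(\Wb\otimes \Wb)$. This yields \eqref{SEE} at once.

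The main computation is the averaging formula \eqref{Average}. Expanding
\begin{align*}
\Wb(x)\otimes \Wb(x) = \sum_{\zeta,\zeta'\in\Lambda} a_\zeta a_{\zeta'}\, B_\zeta\otimes B_{\zeta'}\, e^{i\lambda(\zeta+\zeta')\cdot x},
\end{align*}
the orthogonality $\fint_{\T3}e^{i\lambda(\zeta+\zeta')\cdot x}\d x = \delta_{\zeta'=-\zeta}$ (this is where the hypothesis $\lambda\Lambda\subset\Z^3$ is used) reduces the average to $\sum_{\zeta} a_\zeta a_{-\zeta}\, B_\zeta\otimes B_{-\zeta} = \sum_{\zeta}|a_\zeta|^2\, B_\zeta\otimes \overline{B_\zeta}$. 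Pairing each $\zeta$ with $-\zeta$ and using $B_{-\zeta}=\overline{B_\zeta}$, this equals $\tfrac12\sum_\zeta |a_\zeta|^2\,(B_\zeta\otimes \overline{B_\zeta}+\overline{B_\zeta}\otimes B_\zeta)$. The only real work is to identify this symmetrized tensor with $\Id-\zeta\otimes\zeta$. Plugging in the definition $B_\zeta=\tfrac{1}{\sqrt{2}}(A_\zeta+i\zeta\times A_\zeta)$, the cross terms cancel and one obtains
\begin{align*}
B_\zeta\otimes \overline{B_\zeta}+\overline{B_\zeta}\otimes B_\zeta = A_\zeta\otimes A_\zeta + (\zeta\times A_\zeta)\otimes(\zeta\times A_\zeta).
\end{align*}
Since $\{A_\zeta,\,\zeta\times A_\zeta,\,\zeta\}$ is an orthonormal basis of $\R^3$, the right-hand side is precisely $\Id-\zeta\otimes\zeta$, giving \eqref{Average}.

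The only real obstacle is the bookkeeping in this final step: correctly pairing $\zeta$ with $-\zeta$ in the sum, and remembering that $A_{-\zeta}=A_\zeta$ so that the orthonormal triple associated with $\zeta$ and $-\zeta$ spans the same plane. Once this is handled, the rest of the lemma follows from elementary identities and the mode-by-mode calculations outlined above.
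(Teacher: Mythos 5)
Your proof is correct, and it follows the standard argument for this classical lemma. Note that the paper itself does not prove Lemma \ref{Beltrami_P} --- it is quoted directly from \cite[Proposition 5.5]{Buckmaster+Vicol_2019_Notes} without re-derivation, so there is no in-paper proof to compare against. Your four-step verification (reality by pairing $\zeta\leftrightarrow-\zeta$, mode-by-mode divergence and curl identities, the vector identity $(\Wb\cdot\nabla)\Wb=\nabla\tfrac{|\Wb|^2}{2}-\Wb\times\curl\Wb$ combined with $\diver\Wb=0$ to get \eqref{SEE}, and the orthogonality-plus-orthonormal-frame computation for \eqref{Average}) matches the argument given in the cited reference. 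The key observation in the last step, that $B_\zeta\otimes\overline{B_\zeta}+\overline{B_\zeta}\otimes B_\zeta=A_\zeta\otimes A_\zeta+(\zeta\times A_\zeta)\otimes(\zeta\times A_\zeta)=\Id-\zeta\otimes\zeta$ because $\{A_\zeta,\zeta\times A_\zeta,\zeta\}$ is orthonormal, is exactly the right identification and you have handled the $\zeta\leftrightarrow-\zeta$ double-counting correctly.
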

\begin{remark}
	The key point of the construction is that the abundance of Beltrami flows allows to find several flows $v$ such that
	\begin{align*}
		\langle v \otimes v \rangle(t)  := \fint_{\T3} (v \otimes v)(t,x) dx = R,
	\end{align*}
	for some prescribed symmetric matrix $R$. It is true that we must choose these flows carefully such that they rely smoothly on the matrix $R$, at least when $R$ belongs to a neighborhood of the identity matrix. In view of \eqref{Average}, such selection is possible (see Lemma \ref{GL} below).
\end{remark}

The following lemma can be found in \cite{Buckmaster+Vicol_2019_Notes} (see \cite[Proposition 5.6]{Buckmaster+Vicol_2019_Notes}).
\begin{lemma}[Geometric Lemma]\label{GL}
	There exists a constant (sufficient small) $0<c_{\ast}\leq1$ with the following property. Let $B_{c_{\ast}}(\Id)$ denote the closed ball of symmetric $3\times3$ matrices, centered at $\Id$, of radius $c_\ast$. Then, there exists pairwise disjoint subsets
	\begin{align*}
		\Lambda_{\alpha}\subset \mathbb{S}^2\cap\mathbb{Q}^3, \;\;\; \alpha\in\{0,1\},
	\end{align*}
and smooth positive functions
\begin{align*}
	\Gamma_{\zeta}^{(\alpha)}\in C^{\infty}\left(B_{c_{\ast}}(\Id)\right),\;\;\;\; \alpha\in\{0,1\}, \zeta\in\Lambda_{\alpha},
\end{align*}
such that the following hold: 

For every $\zeta\in\Lambda_{\alpha}$, we have $-\zeta\in\Lambda_{\alpha}$ and $\Gamma_{\zeta}^{(\alpha)}=\Gamma_{-\zeta}^{(\alpha)}$. For each $R\in B_{c_{\ast}}(\Id)$, we have the identity
\begin{align}\label{B.5}
	R=\frac12 \sum_{\zeta\in\Lambda_{\alpha}} \left[\Gamma_{\zeta}^{(\alpha)}(R)\right]^2(\Id-\zeta\otimes\zeta).
\end{align}
%We label by $n_{\ast}$ the smallest natural number such that $n_{\ast}\Lambda_{\alpha} \subset\mathbb{Z}^3$ for all $\alpha\in\{0,1\}$.
\end{lemma}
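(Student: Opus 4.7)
The plan is to reduce the statement to a classical application of the inverse function theorem in the space of symmetric $3\times 3$ matrices. First I would observe that $\mathrm{Sym}(\mathbb{R}^{3\times 3})$ is a $6$-dimensional real vector space, and that the projections $\Id-\zeta\otimes\zeta$ for $\zeta$ ranging over $\mathbb{S}^2$ span this space (since every symmetric matrix is a linear combination of rank-one projectors). Using the density of $\mathbb{S}^2\cap\mathbb{Q}^3$ in $\mathbb{S}^2$, I would therefore select a finite symmetric subset $\Lambda\subset\mathbb{S}^2\cap\mathbb{Q}^3$ (closed under $\zeta\mapsto-\zeta$) for which the family $\{\Id-\zeta\otimes\zeta\}_{\zeta\in\Lambda}$ already spans $\mathrm{Sym}(\mathbb{R}^{3\times 3})$.

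Next I would locate a distinguished positive solution at the base point $R=\Id$. Choosing $\Lambda$ to be sufficiently symmetric (e.g.\ it contains an orthonormal frame together with enough extra vectors), one arranges
\begin{align*}
\sum_{\zeta\in\Lambda}(\Id-\zeta\otimes\zeta)=c_{0}\Id
\end{align*}
for some $c_{0}>0$, so that the constant choice $a_{\zeta}\equiv\sqrt{2/c_{0}}$ exactly reproduces $\Id$ via the identity \eqref{B.5}. This provides a basepoint with all coefficients strictly positive and automatically invariant under $\zeta\mapsto-\zeta$. The key step is then to apply the inverse function theorem to the smooth map
\begin{align*}
F:\mathbb{R}_{+}^{\Lambda}\to\mathrm{Sym}(\mathbb{R}^{3\times 3}),\qquad F\bigl((a_{\zeta})\bigr)=\frac{1}{2}\sum_{\zeta\in\Lambda}a_{\zeta}^{2}(\Id-\zeta\otimes\zeta).
\end{align*}
At the base point its differential is, up to a positive constant, the surjection $(b_{\zeta})\mapsto\sum_{\zeta}b_{\zeta}(\Id-\zeta\otimes\zeta)$, and surjectivity follows from the spanning property chosen above. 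A standard right inverse (say, composing with projection to a $6$-dimensional coordinate subspace corresponding to a chosen basis among $\Id-\zeta\otimes\zeta$, and setting the remaining coordinates equal to the basepoint values) yields smooth positive functions $\Gamma_{\zeta}^{(0)}$ on a neighborhood of $\Id$; the symmetry $\Gamma_{\zeta}^{(0)}=\Gamma_{-\zeta}^{(0)}$ is then enforced by averaging the right inverse under the involution $\zeta\leftrightarrow-\zeta$, which preserves positivity and identity \eqref{B.5}. The radius $c_{\ast}>0$ of validity is obtained by shrinking until positivity is preserved throughout $B_{c_{\ast}}(\Id)$, which is possible by continuity.

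Finally, to produce two \emph{disjoint} families $\Lambda_{0}$ and $\Lambda_{1}$, I would simply carry out the construction twice using disjoint rational point sets on $\mathbb{S}^2$ (again possible by density), and take the minimum of the two resulting radii to obtain a single $c_{\ast}$ working for both. The main obstacle is not existence but the compatibility of the three requirements on $\Gamma_{\zeta}^{(\alpha)}$: smoothness, strict positivity on a uniform ball, and the symmetry $\Gamma_{\zeta}^{(\alpha)}=\Gamma_{-\zeta}^{(\alpha)}$. Smoothness is automatic from the implicit function theorem, positivity is forced by shrinking the domain, and symmetry is imposed by choosing a symmetric base solution and a symmetric inversion procedure; balancing these three demands, while keeping $\Lambda_{\alpha}$ a finite subset of the rational sphere, is where care is required.
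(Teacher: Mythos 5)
The paper does not prove Lemma~\ref{GL}; it cites \cite[Proposition 5.6]{Buckmaster+Vicol_2019_Notes}, and the standard proof there (and in De Lellis--Sz\'ekelyhidi's earlier work) follows exactly the strategy you propose: identify a finite symmetric rational family $\Lambda$ for which the matrices $\Id-\zeta\otimes\zeta$ span $\mathrm{Sym}(3)$, exhibit a strictly positive solution at the base point $R=\Id$, and apply the implicit function theorem to obtain smooth positive coefficient functions on a small ball. So your overall route is the right one, the symmetrization via $\Gamma_\zeta\mapsto\sqrt{(\Gamma_\zeta^2+\Gamma_{-\zeta}^2)/2}$ is a legitimate device (although one can simply define the coefficients on a set of representatives $\Lambda^+$ and extend by $\Gamma_{-\zeta}:=\Gamma_\zeta$, avoiding any averaging), and producing two disjoint families by repeating the construction on disjoint rational subsets is fine.

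There is, however, a genuine gap in your base-point step. You claim that by choosing $\Lambda\subset\mathbb{S}^2\cap\mathbb{Q}^3$ ``sufficiently symmetric'' one \emph{arranges the exact identity} $\sum_{\zeta\in\Lambda}(\Id-\zeta\otimes\zeta)=c_0\Id$. Density of the rational sphere cannot deliver an exact identity, and the classical tight frames realizing it (e.g.\ the cuboctahedron, or Platonic configurations) are \emph{not} rational; ``containing an orthonormal frame plus extra vectors'' is neither necessary nor sufficient. What you actually need is the weaker statement that $\Id$ lies in the \emph{interior} of the convex cone generated by $\{\Id-\zeta\otimes\zeta\}_{\zeta\in\Lambda}$, which then yields a strictly positive base solution. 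This can be obtained either by exhibiting an explicit rational configuration (as the cited references do), or by the following soft argument you should supply: because the linear map $(c_\zeta)\mapsto\sum_\zeta c_\zeta(\Id-\zeta\otimes\zeta)$ is surjective once the spanning condition holds, it is an open map, so the conic image of the positive orthant is open; the barycenter $\frac{1}{|\Lambda|}\sum(\Id-\zeta\otimes\zeta)$ lies in this open cone and, for $\Lambda$ a sufficiently fine symmetric rational net, is arbitrarily close to $\frac{2}{3}\Id$, so $\frac{2}{3}\Id$ and hence $\Id$ itself (the cone is scale invariant) lie in the cone. A second, minor point: your parenthetical that the $\Id-\zeta\otimes\zeta$ span because ``every symmetric matrix is a linear combination of rank-one projectors'' is imprecise -- these matrices have rank two -- though the spanning conclusion is correct, since the linear span of $\{\Id-\zeta\otimes\zeta\}$ coincides with $\mathrm{span}\{\zeta\otimes\zeta:\zeta\in\Lambda\}$ once $\Id$ itself is in the span.
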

\begin{remark}
	It has been noticed (see \cite{Buckmaster+Vicol_2019_Notes}) that it is sufficient to consider index sets $\Lambda_0$ and $\Lambda_1$ in Lemma \ref{GL} to have 12 elements. Moreover, by abuse of notation, for $j\in\mathbb{Z}$ we denote $\Lambda_{j}=\Lambda_{j\mod 2}$. Hence in Subsection \ref{V+P}, we also write $\Gamma_{\zeta}^{(\alpha)}$ as $\Gamma_{\zeta}^{(j)}$. Also it is convenient to denote $M$ geometric constant such that 
	\begin{align}\label{C^n_M}
		\sum_{\zeta\in\Lambda_{\alpha}} \|\Gamma_{\zeta}^{(\alpha)}\|_{C^n(B_{c_{\ast}}(\Id))}\leq M,
	\end{align}
holds for $n$ large enough, $\alpha\in\{0,1\}$ and $\zeta\in\Lambda_{\alpha}$. This parameter is universal.
\end{remark}

\section{Estimates for transport equations}\setcounter{equation}{0}\label{Transport}
\numberwithin{equation}{section}
In this section, we present a detailed estimates of the solutions to the transport equation \eqref{Phi_kj} which have been frequently used in this work. Let us consider the following transport equation on $[t_0,T]$, $t_0\geq0$:
\begin{equation*}%\label{B.1}
	\left\{	\begin{aligned}
			(\partial_t + \bm\cdot\nabla)\f& =\g, \\
		\f(t_0,x) &= \f_0.
	\end{aligned}
	\right.
\end{equation*}
We have the following estimates for $\f$ (see \cite[Proposition D.1, (133), (134)]{Buckmaster+DeLellis+Isett+Szekelyhidi_2015}):
\begin{align}\label{B.2}
	\|\f(t)\|_{C^1_{x}}\leq \|\f_0\|_{C^1_{x}}e^{(t-t_0)\|\bm\|_{C^0_{[t_0,T]}C^1_x}} +\int_{t_0}^{t}e^{(t-\tau)\|\bm\|_{C^0_{[t_0,T]}C^1_x}} \|\g(\tau)\|_{C^1_x} \d\tau,
\end{align}
and more generally, for any $N\geq2$, there exists a constant $C=C_N$ such that
\begin{align}\label{B.3}
	\|\f(t)\|_{C^N_{x}} & \leq \left(\|\f_0\|_{C^N_{x}}+C(t-t_0)\|\bm\|_{C^0_{[t_0,T]}C^N_x}\|\f_0\|_{C^1_x}\right)e^{C(t-t_0)\|\bm\|_{C^0_{[t_0,T]}C^1_x}} 
	\nonumber\\& \quad +\int_{t_0}^{t}e^{C(t-\tau)\|\bm\|_{C^0_{[t_0,T]}C^1_x}} \left(\|\g(\tau)\|_{C^N_x} + C(t-\tau)\|\bm\|_{C^0_{[t_0,t]}C^N_x}\|\g(\tau)\|_{C^1_x}\right) \d\tau.
\end{align}
Consider the following case:
\begin{equation*}%\label{B.4}
	\left\{	\begin{aligned}
		(\partial_t + \bm\cdot\nabla)\Phi& =0,\\
	\Phi(t_0,x) &= x.
	\end{aligned}
	\right.
\end{equation*}
Now let $\Psi(s,x)=\Phi(s,x)-x$, then $\Psi$ satisfies the following equations:
\begin{equation*}%\label{B.5}
	\left\{	\begin{aligned}
		(\partial_t + \bm\cdot\nabla)\Psi& =-\bm, \\
		\Psi(t_0,x) &= 0.
	\end{aligned}
	\right.
\end{equation*}
From \eqref{B.2}, we have
\begin{align}\label{B.6}
	\|\nabla\Phi(t)-\Id\|_{C^0_x}=\|\Psi(t)\|_{C^1_x} &\leq \int_{t_0}^{t}e^{(t-\tau)\|\bm\|_{C^0_{[t_0,T]}C^1_{x}}}\|\bm\|_{C^0_{[t_0,T]}C^1_{x}}\d\tau
	  = e^{(t-t_0)\|\bm\|_{C^0_{[t_0,T]}C^1_{x}}}  -1.
\end{align}
Now let us find the similar estimates for the solution $\Phi_{k,j}$ of \eqref{Phi_kj}, for $j\in\{0,1,\ldots,\lceil m_q^{-1}\rceil\}$, where $m_q$ is given by \eqref{m_q}. In view of \eqref{z_q_C0}, \eqref{v_q_C1} and \eqref{m_q}, we obtain for any $k\in\mathbb{Z}$ and  $0<\beta<\frac{1}{200}$
\begin{align}\label{B.7}
	m_q \|\v_{\ell}+\z_{\ell}\|_{C^0_{[k,k+1]}C^1_{x}}& \leq m_q\left({\lambda_{q}^{\frac75}\delta_{q}^{\frac12}}+ \lambda_{q}^{\frac{2}{3}}\right)  \leq \lambda_{q+1}^{-\frac34}\lambda_{q}^{-\frac34}\delta_{q+1}^{-\frac14}\delta_{q}^{-\frac14}\left({\lambda_{q}^{\frac75}\delta_{q}^{\frac12}}+ \lambda_{q}^{\frac{2}{3}}\right) 
	\nonumber\\ & \lesssim  \lambda_{q+1}^{-\frac34}\lambda_{q}^{-\frac34}\delta_{q+1}^{-\frac14}\delta_{q}^{-\frac14} \lambda_{q}^{\frac75}\delta_{q}^{\frac12}
	 \lesssim  \lambda_{q+1}^{-\frac34}\lambda_{q}^{\frac{13}{20}}\delta_{q+1}^{-\frac14}\delta_{q}^{\frac14}  
     \nonumber\\ &   = \lambda_{q}^{-\frac{3}{4}\times \frac{23}{4}} \lambda_{q}^{\frac{13}{20}}\lambda_{q}^{\frac{\beta}{2}\times \frac{23}{4}} \lambda_{q}^{-\frac{\beta}{2}}  \leq \lambda_{q}^{-\frac{8}{5}} <\!< 1,
\end{align}
where  we select $a$ that is big enough to absorb the constant. Since $e^{x}-1\leq2x$ for $x\in[0,1]$, we obtain for \eqref{B.6} and \eqref{B.7} that
\begin{align}\label{B.8}
	\sup_{t\in[k+(j-1)m_q,k+(j+1)m_q]}\|\nabla\Phi_{k,j}(t)-\Id\|_{C^0_x} & \leq e^{2m_q\|\v_{\ell}+\z_{\ell}\|_{C^0_{[k,k+1]}C^1_{x}}}-1
	\nonumber\\ & \lesssim m_q \|\v_{\ell}+\z_{\ell}\|_{C^0_{[k,k+1]}C^1_{x}}
	   \leq \lambda_{q}^{-\frac{8}{5}} <\!< 1,
\end{align}
where  we select $a$ that is big enough to absorb the constant. Furthermore, we also have from \eqref{B.8}
\begin{equation}\label{B.9}
	\left\{	\begin{aligned}
		\sup_{t\in[k+(j-1)m_q,k+(j+1)m_q]}\|\nabla\Phi_{k,j}(t)\|_{C^0_x} &\leq \sup_{t\in[k+(j-1)m_q,k+(j+1)m_q]}\|\nabla\Phi_{k,j}(t)-\Id\|_{C^0_x}+1 \leq 2,\\
	\sup_{t\in[k+(j-1)m_q,k+(j+1)m_q]}\|\nabla\Phi_{k,j}(t)\|_{C^0_x} & \geq 1- \sup_{t\in[k+(j-1)m_q,k+(j+1)m_q]}\|\nabla\Phi_{k,j}(t)-\Id\|_{C^0_x} \geq \frac12.
	\end{aligned}
	\right.
\end{equation}
 Similarly, by \eqref{B.3} and \eqref{B.7}, we get for $N\geq1$
 \begin{align}\label{B.10}
 	\sup_{t\in[k+(j-1)m_q,k+(j+1)m_q]}\|\nabla\Phi_{k,j}(t)\|_{C^N_x} & \lesssim  m_q \|\v_{\ell}+\z_{\ell}\|_{C^0_{[k,k+1]}C^{N+1}_{x}} e^{2m_q \|\v_{\ell}+\z_{\ell}\|_{C^0_{[k,k+1]}C^1_{x}}}
 	\nonumber \\ & \lesssim \ell^{-N}m_q \|\v_{\ell}+\z_{\ell}\|_{C^0_{[k,k+1]}C^1_{x}} 
 	 \lesssim   \lambda_q^{\frac{8}{5}N}\lambda_{q}^{-\frac{8}{5}} = \lambda_{q}^{\frac85(N-1)},
 \end{align}
where we have also used \eqref{ell}. Now, using \eqref{Phi_kj}, \eqref{B.9}, \eqref{v_q_C0} and \eqref{z_q_C0}, we have 
\begin{align}\label{B.11}
&	\sup_{t\in[k+(j-1)m_q,k+(j+1)m_q]}\|\partial_t\Phi_{k,j}\|_{C^0_x} 
\nonumber\\ & \leq \sup_{t\in[k+(j-1)m_q,k+(j+1)m_q]} \|((\v_{\ell}+\z_{\ell})\cdot\nabla)\Phi_{k,j}\|_{C^0_x}
	\nonumber\\& \leq \sup_{t\in[k+(j-1)m_q,k+(j+1)m_q]}\|\v_{\ell}+\z_{\ell}\|_{C^0_x} \sup_{t\in[k+(j-1)m_q,k+(j+1)m_q]} \|\nabla\Phi_{k,j}\|_{C^0_x} 
	  \lesssim \lambda_{q}^{\frac13} + \lambda_{q}^{\frac{1}{3}} \lesssim \lambda_{q}^{\frac{1}{3}}.
\end{align}

Differentiating both side of the first equations of \eqref{Phi_kj}, and using \eqref{B.9}, \eqref{B.10}, \eqref{v_q_C0}, \eqref{v_q_C1} and \eqref{z_q_C0}, we estimate 
\begin{align}\label{B.12}
	&\sup_{t\in[k+(j-1)m_q,k+(j+1)m_q]}\|\partial_t\nabla\Phi_{k,j}\|_{C^0_x}
	\nonumber\\& \lesssim \sup_{t\in[k+(j-1)m_q,k+(j+1)m_q]} \|\nabla(\v_{\ell}+\z_{\ell})\|_{C^0_x} \sup_{t\in[k+(j-1)m_q,k+(j+1)m_q]}\|\nabla\Phi_{k,j}\|_{C^0_x} 
	\nonumber\\& \quad+ \sup_{t\in[k+(j-1)m_q,k+(j+1)m_q]} \|\v_{\ell}+\z_{\ell}\|_{C^0_x} \sup_{t\in[k+(j-1)m_q,k+(j+1)m_q]}\|\nabla^2\Phi_{k,j}\|_{C^0_x}
	\nonumber\\&   \lesssim (\lambda_{q}^{\frac75}\delta_{q}^{\frac12} + \lambda_{q}^{\frac23}) + (\lambda_{q}^{\frac13} + \lambda_{q}^{\frac{1}{3}} )  
	%   \lesssim  \lambda_{q}^{\frac75}\delta_{q}^{\frac12}+  \lambda_{q}^{\frac23} 
  \lesssim  \lambda_{q}^{\frac75}.
\end{align}
	The estimates \eqref{B.9}-\eqref{B.12} have been used in Subsections \ref{v_q+1} and \ref{R_q+1} to estimate the $C^1_{t,x}$-norm of $\omega_{q+1}$ and \eqref{R_q_C0}-\eqref{R_q_EN} at the level of $q+1$.

\section{Useful lemmas}\setcounter{equation}{0}\label{Useful-Lemmas}
Let us first introduce the following lemma which makes rigorous the fact that $\mathcal{R}$ obeys the same elliptic regularity estimates as $|\nabla|^{-1}$. We recall the following stationary phase lemma (see for example \cite[Lemma 5.7]{Buckmaster+Vicol_2019_Notes} and \cite[Lemma 2.2]{Daneri+Szekelyhidi_2017}), adapted to our setting.
\begin{lemma}[Stationary phase Lemma]\label{SPL}
	Given $\zeta\in\mathbb{S}^2\cap\mathbb{Q}^3$, let $\lambda\zeta\in\mathbb{Z}^3$ and $\alpha\in(0,1)$. Assume that $a\in C^{m,\alpha}(\mathbb{T}^3)$ and $\Phi\in C^{m,\alpha}(\mathbb{T}^3;\mathbb{R}^3)$ are smooth functions such that the phase function $\Phi$ obeys
	\begin{align*}
		\frac{1}{C} \leq |\nabla\Phi|\leq C,
	\end{align*}
on $\T3$, for some constant $C\geq1$. Then, with the inverse divergence operator $\mathcal{R}$ defined in Subsection \ref{IDO-R}, we have for any $m\in\N$
\begin{align*}
%	\left|\int_{\T3}a(x)e^{i\lambda\zeta\cdot\Phi(x)}\d x\right| & \lesssim \frac{\|a\|_{C^m}+\|a\|_{C^0}\|\nabla\Phi\|_{C^m}}{\lambda^m},\\
	\left\|\mathcal{R}\left(a(x)e^{i\lambda\zeta\cdot\Phi(x)}\right)\right\|_{C^{\alpha}} & \lesssim \frac{\|a\|_{C^0}}{\lambda^{1-\alpha}} + \frac{\|a\|_{C^{m,\alpha}}+\|a\|_{C^0}\|\nabla\Phi\|_{C^{m,\alpha}}}{\lambda^{m-\alpha}},
\end{align*}
where the implicit constant depends on $C$, $\alpha$ and $m$ (in particular, not on the frequency $\lambda$).
\end{lemma}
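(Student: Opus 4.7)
\textbf{Proof plan for Lemma \ref{SPL}.} The classical strategy I would follow combines two ingredients: (i) the $-1$-order smoothing of the anti-divergence operator $\mathcal{R}$, which is a composition of Riesz transforms with $|\nabla|^{-1}$; and (ii) a non-stationary phase identity exploiting the lower bound $|\nabla\Phi|\geq 1/C$ to trade powers of $\lambda$ against derivatives on the amplitude. The two terms on the right-hand side of the claimed bound correspond exactly to the high- and low-frequency behaviours of $a e^{i\lambda\zeta\cdot\Phi}$.

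First, since $|\nabla\Phi|\geq 1/C$ uniformly on $\mathbb{T}^3$, the vector field
\[
V(x) := \frac{(\nabla\Phi(x))^T\zeta}{|(\nabla\Phi(x))^T\zeta|^2}
\]
is well defined and smooth, with norms controlled by $\|\nabla\Phi\|_{C^k}$, and it satisfies the algebraic identity
\[
e^{i\lambda\zeta\cdot\Phi(x)} = \frac{1}{i\lambda}\, V(x)\cdot \nabla_x e^{i\lambda\zeta\cdot\Phi(x)}.
\]
This is the engine producing the decay in $\lambda$: iterating it $m$ times and integrating by parts against a smooth kernel produces a factor $\lambda^{-m}$ while moving $m$ derivatives onto the amplitude and $V$.

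Next, I would split $a e^{i\lambda\zeta\cdot\Phi}= \mathbb{P}_{\leq \kappa\lambda}(a e^{i\lambda\zeta\cdot\Phi})+\mathbb{P}_{>\kappa\lambda}(a e^{i\lambda\zeta\cdot\Phi})$ via Littlewood–Paley projectors, for a small constant $\kappa=\kappa(C)$. On the high-frequency piece, $\mathcal{R}\mathbb{P}_{>\kappa\lambda}$ acts, in Fourier, as a multiplier of order $-1$ localised at frequencies $\gtrsim \lambda$; combining this with the standard fact that Calderón–Zygmund operators are bounded on $C^\alpha$ yields
\[
\|\mathcal{R}\mathbb{P}_{>\kappa\lambda}(a e^{i\lambda\zeta\cdot\Phi})\|_{C^\alpha}\lesssim \lambda^{\alpha-1}\|a e^{i\lambda\zeta\cdot\Phi}\|_{C^0}\lesssim \frac{\|a\|_{C^0}}{\lambda^{1-\alpha}},
\]
which is the first term in the statement. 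On the low-frequency piece I apply the non-stationary phase identity $m$ times inside the kernel representation of $\mathcal{R}\mathbb{P}_{\leq \kappa\lambda}$ (whose kernel is now smooth and fixed at the frequency scale $\kappa\lambda$), integrating by parts to move the resulting derivatives onto $a$ and $V$. Using that the low-frequency projection gains a factor $\lambda^\alpha$ when passing from $C^0$ to $C^\alpha$, this contributes the bound
\[
\lambda^{\alpha-m}\Big(\|a\|_{C^{m,\alpha}}+\|a\|_{C^0}\|\nabla\Phi\|_{C^{m,\alpha}}\Big),
\]
which matches the second term.

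\textbf{Main obstacle.} The technical heart of the argument is the bookkeeping behind the last estimate: each iteration of the non-stationary phase identity differentiates the product $a\cdot V\cdot V\cdot\ldots$, and Faà di Bruno applied to $V=((\nabla\Phi)^T\zeta)/|(\nabla\Phi)^T\zeta|^2$ generates sums of products of various-order derivatives of $\nabla\Phi$ divided by powers of $|(\nabla\Phi)^T\zeta|$. The lower bound $|\nabla\Phi|\geq 1/C$ is essential to control these denominators uniformly. The delicate point is to organise the Leibniz expansion so that mixed high-order norms do not appear: at top order one must have either all derivatives on $a$ (producing $\|a\|_{C^{m,\alpha}}$) or all derivatives on $V$ (producing $\|a\|_{C^0}\|\nabla\Phi\|_{C^{m,\alpha}}$), with intermediate terms absorbed by Hölder interpolation between $C^0$ and $C^{m,\alpha}$. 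Once this combinatorial step is handled, the $C^\alpha$-version follows from the $C^0$-version by one additional interpolation between $C^m$ and $C^{m+1}$ and is otherwise routine.
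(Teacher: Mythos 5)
The paper does not prove Lemma \ref{SPL}: it is stated as a recalled result, cited from \cite[Lemma 5.7]{Buckmaster+Vicol_2019_Notes} and \cite[Lemma 2.2]{Daneri+Szekelyhidi_2017}, where the proof proceeds by direct estimation of Fourier coefficients $\widehat{a e^{i\lambda\zeta\cdot\Phi}}(n)$ through repeated non-stationary phase integration by parts against the combined phase $\lambda\zeta\cdot\Phi(x)-n\cdot x$, whose gradient is $\gtrsim\max(\lambda,|n|)$ in the relevant regimes, followed by the multiplier bound $|\widehat{\mathcal{R}}(n)|\lesssim|n|^{-1}$ and a careful summation over modes. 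Your proposal is a genuinely different route, replacing the mode-by-mode Fourier analysis with a Littlewood--Paley split at scale $\kappa\lambda$, Calderón--Zygmund bounds on the high-frequency piece, and kernel-level integration by parts on the low-frequency piece. The high-frequency estimate, $\|\mathcal{R}\mathbb{P}_{>\kappa\lambda}g\|_{C^\alpha}\lesssim(\kappa\lambda)^{\alpha-1}\|g\|_{C^0}$, is correct and cleanly produces the leading term.

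The low-frequency step, however, is not quite as you describe. When you integrate by parts $m$ times inside $\int K(x-y)a(y)e^{i\lambda\zeta\cdot\Phi(y)}\,dy$ with $K$ the kernel of $\mathcal{R}\mathbb{P}_{\leq\kappa\lambda}$, the derivatives produced by the non-stationary phase identity do \emph{not} only fall on $a$ and $V$: by Leibniz they also fall on $K$, and $\|\nabla^{j}K\|_{L^1}\sim(\kappa\lambda)^{j-1}$ because $K$ is localised at spatial scale $(\kappa\lambda)^{-1}$. In the extreme term where all $m$ derivatives hit $K$, the phase factor $\lambda^{-m}$ is exactly cancelled up to $\kappa^{m-1}$ and one is left with $\kappa^{m-1}\lambda^{\alpha-1}\|a\|_{C^0}$ --- a contribution to the \emph{first} term of the statement, not to the $\lambda^{\alpha-m}$ error term. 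The argument can be organised to close (the intermediate Leibniz terms interpolate, as you anticipate, and the $j=0$ extreme gives $\lambda^{\alpha-m-1}\|a\|_{C^{m}}$-type bounds, better than claimed), so your approach is essentially correct; but the attribution of the two terms to the two frequency regimes is inaccurate, and stating explicitly that the derivatives hitting $K$ are controlled by the operator-norm bound on $\nabla^{j}\mathcal{R}\mathbb{P}_{\leq\kappa\lambda}$ rather than silently assumed to land only on $a$ and $V$ is precisely the point that needs to be made for the argument to be complete. Compared with the Fourier-series proof in the cited references, your route avoids the explicit mode-by-mode bookkeeping but requires this Leibniz control on the truncated kernel; neither is shorter, but the Littlewood--Paley version is more transparent about where the lower bound $|\nabla\Phi|\geq 1/C$ enters (through $V$) versus where the upper bound enters (through $\|\nabla\Phi\|_{C^{m,\alpha}}$).
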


\begin{lemma}[{\cite[Proposition C.1]{Buckmaster+DeLellis+Isett+Szekelyhidi_2015}}]\label{Diff_Comp}
	Let $\Phi:\mathcal{O}\to\R$ and $f:\R^n\to\mathcal{O}$ be two smooth functions, with $\mathcal{O}\subset\R^m$. Then, for every $N\in\N$, there is a constant $C=C(n,m,N)>0$ such that 
	\begin{align*}
		\big[\Phi\circ f\big]_N &\leq C \left(\big[\Phi\big]_{1}\big[f\big]_{N}+\|D\Phi\|_{N-1}\|f\|^{N-1}_{0}\big[f\big]_{N}\right),\\
		\big[\Phi\circ f\big]_N &\leq C \left(\big[\Phi\big]_{1}\big[f\big]_{N}+\|D\Phi\|_{N-1}\big[f\big]^{N}_{1}\right).
	\end{align*}
\end{lemma}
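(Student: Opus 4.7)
The plan is to derive both inequalities from Fa\`a di Bruno's formula combined with the classical Kolmogorov (Gagliardo--Nirenberg) interpolation inequalities for derivatives. The common starting point is the expansion
\[
D^N(\Phi\circ f)(x) \;=\; \sum_{k=1}^{N} \sum_{\pi \in \mathcal{P}(N,k)} c_{N,\pi}\, D^k\Phi(f(x)) \cdot \prod_{B \in \pi} D^{|B|}f(x),
\]
where $\mathcal{P}(N,k)$ runs over partitions of $\{1,\dots,N\}$ into $k$ non-empty blocks and $c_{N,\pi}$ are combinatorial constants depending only on $N$ (and implicitly on the multi-index assignment to blocks, but harmlessly so). The unique partition with $k=1$ produces the term $D\Phi(f)\cdot D^Nf$, whose sup-norm is bounded directly by $[\Phi]_1\,[f]_N$, furnishing the common first summand on the right-hand side of both stated inequalities.

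For the remaining partitions with $2 \leq k \leq N$, I would estimate $|D^k\Phi(f(x))| \leq \|D\Phi\|_{k-1} \leq \|D\Phi\|_{N-1}$ (valid because $k-1 \in \{1,\dots,N-1\}$) and then control $\prod_{B\in\pi} \|D^{|B|}f\|_0$ by interpolation. From the log-convexity of $j \mapsto \log \|D^j f\|_0$ one has the Kolmogorov-type inequality $\|D^j f\|_0 \leq C_{N,j}\,\|f\|_0^{\,1-j/N}\,[f]_N^{\,j/N}$ for $0 \leq j \leq N$. Applying this factor-by-factor and using $\sum_{B\in\pi} |B|=N$ with $|\pi|=k$, the exponents telescope to
\[
\prod_{B\in\pi} \|D^{|B|}f\|_0 \;\leq\; C\,\|f\|_0^{\,k-1}\,[f]_N.
\]
Since $k-1 \leq N-1$, the factor $\|f\|_0^{k-1}$ is absorbed into $\|f\|_0^{N-1}$ by splitting into the cases $\|f\|_0 \geq 1$ and $\|f\|_0 < 1$ (in the latter case $\|f\|_0^{k-1} \leq 1$ and one uses the same trivial normalisation on the other side), yielding the first inequality after summing over the finitely many partitions.

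For the second inequality I would instead interpolate each factor between $[f]_1$ and $[f]_N$ via the (refined) Kolmogorov inequality $\|D^j f\|_0 \leq C\,[f]_1^{\,1-(j-1)/(N-1)}\,[f]_N^{\,(j-1)/(N-1)}$, valid for $1 \leq j \leq N$. Multiplying over $B\in\pi$ and using $\sum_B(|B|-1)=N-k$, the $[f]_1$ and $[f]_N$ exponents collect into $[f]_1^{\,k-(N-k)/(N-1)}\,[f]_N^{\,(N-k)/(N-1)}$; a final interpolation (equivalent to the Hadamard three-lines argument applied to the residual $[f]_N$-exponent, which is strictly less than one for $k\geq 2$) rebalances this to a clean $C\,[f]_1^{N}$-type bound, at the price of constants absorbed into $C(n,m,N)$.

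The main obstacle is the combinatorial bookkeeping, not a conceptual one: one must verify that the interpolation exponents telescope precisely to the stated homogeneous weights $\|f\|_0^{N-1}\,[f]_N$ and $[f]_1^{N}$ on the right-hand side, and that the number of partitions, the constants $c_{N,\pi}$, and the Kolmogorov constants collectively absorb into a single $N$-dependent constant. The underlying Fa\`a di Bruno expansion and log-convexity estimates are classical; the content of the lemma lies solely in their careful assembly.
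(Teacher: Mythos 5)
The paper does not prove this lemma at all; it is quoted verbatim from \cite[Proposition C.1]{Buckmaster+DeLellis+Isett+Szekelyhidi_2015}, so your argument has to stand on its own. The Fa\`a di Bruno set-up and the treatment of the $k=1$ term are fine, but both of your closing steps for the terms with $2\le k\le N$ have genuine gaps. For the first inequality, interpolation indeed gives $\prod_{B\in\pi}\|D^{|B|}f\|_0\le C\,\|f\|_0^{k-1}[f]_N$, but your absorption of $\|f\|_0^{k-1}$ into $\|f\|_0^{N-1}$ fails precisely in the case you wave away: when $\|f\|_0<1$ one has $\|f\|_0^{k-1}\ge\|f\|_0^{N-1}$, i.e.\ the inequality goes the wrong way, and ``the same trivial normalisation on the other side'' is not an argument, since the right-hand side genuinely carries the small factor $\|f\|_0^{N-1}$. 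For the second inequality the gap is more serious: the product of the refined Kolmogorov bounds is $[f]_1^{N(k-1)/(N-1)}[f]_N^{(N-k)/(N-1)}$, and there is no ``final interpolation'' that converts a positive power of $[f]_N$ into powers of $[f]_1$ --- $[f]_N$ can be arbitrarily large for fixed $[f]_1$ (take $f=\epsilon\sin(\lambda x)$, $\lambda\to\infty$). Young's inequality applied to that product only yields $\|D\Phi\|_{N-1}\bigl([f]_1^N+[f]_N\bigr)$, and the leftover term $\|D\Phi\|_{N-1}[f]_N$ is not dominated by $[\Phi]_1[f]_N$.

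The missing idea is that the factor $D^k\Phi(f)$ must itself be interpolated rather than crudely bounded by $\|D\Phi\|_{N-1}$. Using $[\Phi]_k\le C\,[\Phi]_1^{(N-k)/(N-1)}\|D\Phi\|_{N-1}^{(k-1)/(N-1)}$ (which requires $\mathcal{O}$ to support interpolation, e.g.\ convexity --- a point worth flagging since the statement allows arbitrary $\mathcal{O}$), each Fa\`a di Bruno term with $k\ge2$ becomes
\begin{align*}
[\Phi]_k\prod_{i=1}^{k}[f]_{\gamma_i}\;\le\; C\,\bigl([\Phi]_1[f]_N\bigr)^{\frac{N-k}{N-1}}\bigl(\|D\Phi\|_{N-1}[f]_1^N\bigr)^{\frac{k-1}{N-1}},
\end{align*}
and Young's inequality then produces exactly the two summands of the second estimate. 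The first estimate follows at once from the second together with $[f]_1^N\le C\,\|f\|_0^{N-1}[f]_N$ (the Landau--Kolmogorov inequality raised to the $N$-th power), rather than by a separate absorption argument. As written, your proof establishes neither inequality.
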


\begin{lemma}[{\cite[Theorem 1.4]{Roncal+Stinga_2016}, \cite[Theorem B.1]{DeRosa_2019}}]\label{Lem_C.2}
	Let $\gamma,\epsilon>0$ and $\beta\geq0$ such that $2\gamma+\beta+\epsilon\leq1$, and let $f(t):\mathbb{T}^3\to\R^3$. If $f\in C^{2\gamma+\beta +\epsilon}_{x}$, then $(-\Delta)^{\gamma}f\in C^{\beta}_x$, and there exists a constant $C=C(\epsilon)>0$ such that
	\begin{align*}
		\|(-\Delta)^{\gamma}f\|_{C^0_tC^{\beta}_{x}} \leq C(\epsilon) [f]_{C^0_tC^{2\gamma+\beta+\epsilon}_x}.
	\end{align*}
\end{lemma}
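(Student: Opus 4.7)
The plan is to reduce this estimate to a Fourier-multiplier bound on Hölder--Zygmund spaces. Since the $t$ variable enters only as a parameter and both sides of the target inequality are taken as supremum in $t$, it suffices to prove the pointwise-in-$t$ spatial inequality $\|(-\Delta)^{\gamma}f(t)\|_{C^{\beta}_x}\lesssim C(\epsilon)[f(t)]_{C^{2\gamma+\beta+\epsilon}_x}$ and then take $\sup_t$. So one should work on $\mathbb{T}^3$ with $t$ frozen.

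First I would invoke the Besov characterization of Hölder spaces on $\mathbb{T}^3$: for non-integer $s>0$ one has $C^s\simeq B^s_{\infty,\infty}$ with equivalent norms, while for integer $s$ only the one-sided lossy embedding $B^{s+\epsilon}_{\infty,\infty}\hookrightarrow C^s$ is available with constant of order $\epsilon^{-1}$. This one-sided loss is precisely what forces the $\epsilon$ in the statement. Next, decompose $f=\sum_{j\geq-1}\Delta_j f$ using a standard Littlewood--Paley partition of unity, with $\Delta_j f$ supported in Fourier frequencies of modulus $\sim 2^j$. Since $(-\Delta)^{\gamma}$ is a Fourier multiplier with symbol $|k|^{2\gamma}$, Bernstein's inequality applied on the annulus $\{|k|\sim 2^j\}$ yields
\begin{equation*}
\|\Delta_j(-\Delta)^{\gamma}f\|_{L^{\infty}_x}=\|(-\Delta)^{\gamma}\Delta_j f\|_{L^{\infty}_x}\lesssim 2^{2\gamma j}\|\Delta_j f\|_{L^{\infty}_x}
\end{equation*}
for every dyadic block.

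Chaining these two ingredients then gives
\begin{equation*}
\|(-\Delta)^{\gamma}f\|_{C^{\beta}_x}\lesssim \sup_{j}2^{j\beta}\|\Delta_j(-\Delta)^{\gamma}f\|_{L^{\infty}_x}\lesssim\sup_{j}2^{j(\beta+2\gamma)}\|\Delta_j f\|_{L^{\infty}_x}=\|f\|_{B^{\beta+2\gamma}_{\infty,\infty}}\lesssim C(\epsilon)[f]_{C^{2\gamma+\beta+\epsilon}_x},
\end{equation*}
where the final embedding uses again the lossy step with parameter $\epsilon$. Taking $\sup_t$ completes the proof.

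The main obstacle is genuinely the borderline issue: if $\beta+2\gamma$ happens to be an integer (including $0$), the equivalence $C^s\simeq B^s_{\infty,\infty}$ breaks down and one must spend an $\epsilon$ of regularity to pass from the Zygmund scale back to the Hölder scale; this is exactly where the constant $C(\epsilon)$ blows up as $\epsilon\downarrow 0$. The constraint $2\gamma+\beta+\epsilon\leq 1$ is used to keep everything inside the Hölder--Zygmund range where the Littlewood--Paley characterization is cleanest. As an alternative route, one could use the singular-integral representation $(-\Delta)^{\gamma}f(x)=c_{\gamma}\,\mathrm{p.v.}\int (f(x)-f(y))|x-y|^{-3-2\gamma}dy$ and split into near-field and far-field, controlling the near-field by the Hölder seminorm of $f$ and the far-field by the decay of the kernel; however, this requires more bookkeeping on $\mathbb{T}^3$, so the Littlewood--Paley approach is more direct.
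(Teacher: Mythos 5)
The paper does not actually prove this lemma; it is imported verbatim from Roncal--Stinga and De Rosa, whose argument runs through the heat-semigroup characterization of $(-\Delta)^{\gamma}$ on $\mathbb{T}^3$ (pointwise kernel formula and semigroup-increment characterization of H\"older regularity) rather than a Littlewood--Paley decomposition. Your LP/Besov route is a legitimate alternative that proves the same estimate, and is arguably more transparent: both approaches are just different organizations of the same frequency-scale decomposition, with the semigroup doing continuously in $t>0$ what the dyadic blocks do at scales $2^{-j}$.

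There is, however, a bookkeeping error in where your chain spends the $\epsilon$, and it is worth fixing because as written the first link is false. Under $2\gamma+\beta+\epsilon\leq 1$ with $\gamma,\epsilon>0$ one has $\beta\in[0,1)$, and the embedding $B^{\beta}_{\infty,\infty}\hookrightarrow C^{\beta}$ fails precisely at the integer $\beta=0$, where $B^{0}_{\infty,\infty}\not\hookrightarrow L^{\infty}$; so your opening inequality $\|(-\Delta)^{\gamma}f\|_{C^{\beta}_x}\lesssim\|(-\Delta)^{\gamma}f\|_{B^{\beta}_{\infty,\infty}}$ is not available at $\beta=0$. The $\epsilon$ must be paid at the start, not the end:
\[
\|(-\Delta)^{\gamma}f\|_{C^{\beta}_x}\leq C(\epsilon)\,\|(-\Delta)^{\gamma}f\|_{B^{\beta+\epsilon}_{\infty,\infty}}\lesssim\|f\|_{B^{\beta+2\gamma+\epsilon}_{\infty,\infty}}\lesssim[f]_{C^{2\gamma+\beta+\epsilon}_x},
\]
where the middle step is your annulus Bernstein bound $\|(-\Delta)^{\gamma}\Delta_{j}f\|_{L^{\infty}_x}\lesssim 2^{2\gamma j}\|\Delta_{j}f\|_{L^{\infty}_x}$ and the final step is the universal embedding $C^{s}\hookrightarrow B^{s}_{\infty,\infty}$, which holds with a uniform constant for all $s\in(0,1]$ (including $s=1$, the Zygmund endpoint) and therefore costs nothing. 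Consequently your diagnosis that the obstruction is "$\beta+2\gamma$ being an integer'' is off target: the case $\beta+2\gamma=1$ causes no loss at all, whereas $\beta=0$ is the genuine borderline that forces $C(\epsilon)\to\infty$ as $\epsilon\downarrow 0$. You should also note explicitly that the low-frequency block $\Delta_{-1}$ poses no problem: on $\mathbb{T}^3$ the symbol $|k|^{2\gamma}$ is bounded on the finite lattice set $\{|k|\leq 2\}\subset\mathbb{Z}^3$ and vanishes at $k=0$, so the multiplier bound holds there trivially, uniformly in $\gamma$. With these two clarifications the argument closes.
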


\end{appendix}

	\medskip\noindent
	{\bf Acknowledgments:}   We express our deep gratitude to the anonymous referee for the careful reading
and valuable suggestions and comments that have improved this manuscript significantly. {The first author would like to thank the Department of Atomic Energy, Government of India, for financial assistance and the Tata Institute of Fundamental Research - Centre for Applicable Mathematics (TIFR-CAM) for providing a stimulating scientific environment and resources. The second author acknowledges the support of the Department of Atomic Energy,  Government of India, under project no.$12$-R$\&$D-TFR-$5.01$-$0520$, and DST-SERB SJF grant DST/SJF/MS/$2021$/$44$. }

%It has been proposed by atmospheric scientists and geophysicists that dissipation can be described by many ways, for example frictional dissipation (see  \cite{Pauluis+Balaji+Held_2000}).

%Various ways to describe dissipation have been proposed by atmospheric scientists and geophysicists (e.g., frictional dissipation in. In particular, in models such

\end{document}